\newcommand{\lvec}[1]{\accentset{\leftarrow}{#1}{}}
\renewcommand{\centerdot}{{\bolds{\cdot}}}
\newcommand{\rrvert}{\vert}
\newcommand{\llvert}{\vert}
\newtheorem{theorem}{Theorem}[section]
\newtheorem{lemma}[theorem]{Lemma}
\newtheorem{proposition}[theorem]{Proposition}
\newtheorem{corollary}[theorem]{Corollary}
\def\xhat{\hat x}
\begin{document}
\begin{frontmatter}

\title{Ratios of partition functions for\break the log-gamma polymer}
\runtitle{Log-gamma polymer}

\begin{aug}
\author[A]{\fnms{Nicos}~\snm{Georgiou}\thanksref{T1}\ead[label=e1]{n.georgiou@sussex.ac.uk}},
\author[B]{\fnms{Firas} \snm{Rassoul-Agha}\thanksref{T1}\ead[label=e2]{firas@math.utah.edu}},
\author[C]{\fnms{Timo}~\snm{Sepp\"al\"ainen}\corref{}\thanksref{T3}\ead[label=e3]{seppalai@math.wisc.edu}}
\and
\author[D]{\fnms{Atilla} \snm{Yilmaz}\thanksref{T4}\ead[label=e4]{atilla.yilmaz@boun.edu.tr}}
\runauthor{Georgiou, Rassoul-Agha, Sepp\"al\"ainen and Yilmaz}
\affiliation{University of Sussex,
University of Utah,
University of Wisconsin--Madison and Bo\u gazi\c ci University}
\address[A]{N. Georgiou\\
Department of Mathematics\\
University of Sussex\\
Falmer Campus\\
Brighton BN1 9QH\\
United Kingdom\\
\printead{e1}}
\address[B]{F. Rassoul-Agha\\
Mathematics Department\\
University of Utah\\
155S 1400E\\
Salt Lake City, Utah 84112\hspace*{8pt}\\
USA\\
\printead{e2}}
\address[C]{T. Sepp\"al\"ainen\\
Mathematics Department\\
University of Wisconsin--Madison\\
Van Vleck Hall\\
480 Lincoln Dr.\\
Madison, Wisconsin 53706-1388\\
USA\\
\printead{e3}} 
\address[D]{A. Yilmaz\\
Department of Mathematics\\
Bo\u gazi\c ci University\\
34342 Bebek, Istanbul\\
Turkey\\
\printead{e4}}
\end{aug}
\thankstext{T1}{Supported in part by NSF Grant
DMS-07-47758.}
\thankstext{T3}{Supported in part by NSF
Grants DMS-10-03651 and DMS-13-06777 and by the Wisconsin Alumni
Research Foundation.}
\thankstext{T4}{Supported in part by European Union FP7 Grant
PCIG11-GA-2012-322078.}

\received{\smonth{3} \syear{2013}}
\revised{\smonth{1} \syear{2014}}

%
\begin{abstract}
We introduce a random walk in random environment associated to an
underlying directed polymer model in $1+1$ dimensions. This walk is the
positive temperature
counterpart of the competition interface of percolation and arises as the
limit of quenched polymer measures. We prove this limit for the exactly solvable
log-gamma polymer, as a consequence of almost sure limits of ratios of
partition functions.
These limits of ratios give the Busemann functions of the log-gamma
polymer, and furnish
centered cocycles that solve a variational formula for the limiting
free energy.
Limits of ratios of point-to-point and point-to-line partition
functions manifest a
duality between tilt and velocity that comes from quenched large
deviations under
polymer measures. In the log-gamma case, we identify a family of
ergodic invariant distributions for the random walk in random environment.
\end{abstract}

%
\begin{keyword}[class=AMS]
\kwd{60K35}
\kwd{60K37}
\end{keyword}
\begin{keyword}
\kwd{Busemann function}
\kwd{competition interface}
\kwd{convex duality}
\kwd{directed polymer}
\kwd{geodesic}
\kwd{Kardar--Parisi--Zhang universality}
\kwd{large deviations}
\kwd{log-gamma polymer}
\kwd{random environment}
\kwd{random walk in random environment}
\kwd{variational formula}
\end{keyword}
\end{frontmatter}

\section{Introduction}\label{sec1}
In directed polymer models the definition of weak disorder is that
normalized point-to-line partition functions converge to a strictly
positive random variable. In strong disorder these normalized partition
functions converge to zero. Weak disorder takes place only in
dimensions $3+1$ and higher and under high enough temperature, while
lower dimensions are in strong disorder throughout the temperature
range; see \cite
{carm-hu-02,come-shig-yosh-03,come-shig-yosh-04,come-varg-06,come-yosh-aop-06,denholl-polymer,laco-10}
for reviews and some key results.

We work in $1+1$ dimensions with the explicitly solvable log-gamma polymer.
We show that ratios of both point-to-point partition functions
and tilted point-to-line
partition functions converge almost surely to gamma-distributed limits.
Out of this basic fact we derive several consequences.
\begin{longlist}[(ii)]
\item[(i)] Limits of ratios of partition functions give us limits of quenched
polymer measures, both point-to-point and point-to-line,
as the path length tends to infinity.
The limit processes can be
regarded as infinitely long polymers. Technically they are
random walks in correlated random environments (RWRE).
When we average over the environment, this RWRE
has fluctuation exponent $2/3$, in accordance with $1+1$ dimensional
Kardar--Parisi--Zhang universality.
This
polymer RWRE is also a
positive temperature counterpart of a competition interface
in a percolation model. (This terminology comes from the idea
that percolation models are zero-temperature polymers. Remark
\ref{hrmk2} below explains.) For the RWRE we identify a family of
stationary and ergodic distributions for the environment as seen
from the particle. The averaged stationary RWRE is a standard random
walk.

\item[(ii)] Logarithms of the limiting point-to-point
ratios give us an analogue of Busemann functions
in the positive temperature setting. Busemann functions have
emerged as a central object in the study of geodesics and
invariant distributions of percolation models and related interacting
particle systems \cite{bakh-cato-khan,cato-pime-12,damr-hans,hoff-05,newm-icm-95}. Our paper introduces this notion in the positive
temperature setting.
We show how Busemann functions solve a variational problem that
characterizes the limiting free energy density of the log-gamma
polymer.
\end{longlist}

A theme that appears more than once is a
familiar large deviations duality between
the asymptotic velocity of the path under polymer distributions
and a tilt introduced
into the partition function and probability distribution.
In this duality the mapping
from velocity to tilt is given by the expectation of
the Busemann function. In particular, this duality determines how
limits of
ratios of point-to-point
and tilted point-to-line
partition functions match up with each other.

A word of explanation about our focus on the log-gamma polymer.
The ultimate goal is of course to find results valid for a wide class
of polymer models. We could formulate at least some of our results
more generally. But the statements would be complicated and need
hypotheses that we can presently
verify only for the log-gamma model anyway. For general polymers,
just as for general percolation models, we cannot currently prove even mild
regularity properties
for the limiting free energy. Thus we
chose to focus exclusively on the log-gamma model (except for the
general discussions in Sections~\ref{seccif} and
\ref{secbuse}).

We expect that much of this picture can
eventually be verified for general $1+1$ dimensional directed polymers.
Our hope is that this paper would inspire such further work.
For example, it is clear that the solution of the variational formula
for the
free energy in terms of Busemann functions works completely
generally, once a sufficiently strong existence statement for
Busemann functions is proved. Busemann functions with
tractable distributions are an essential feature of the exact solvability
of the log-gamma polymer. They can be used to construct a \mbox{shift-}invariant
version of the polymer model, which was earlier used for
deriving
fluctuation exponents and large deviation rate functions \cite
{geor-sepp,sepp-12-aop}.

The log-gamma polymer was introduced in \cite{sepp-12-aop} and
subsequently linked
with integrable
systems and interesting combinatorics \cite
{boro-corw-mcd,corw-ocon-sepp-zygo,ocon-sepp-zygo}.
The log-gamma polymer is a canonical model in the Kardar--Parisi--Zhang
universality class, in the same vein as
the asymmetric simple exclusion process, the
corner growth model with geometric or exponential weights and the
semidiscrete polymer of O'Connell--Yor \cite{corw-rev,joha,oconn-yor-01,quas-icm,spoh-12,trac-wido-02}.
These exactly solvable models
are believed to be representative of
what should be true more generally.


\subsection*{Organization of the paper}
The paper is
essentially self-contained. One exception is that
in Section~\ref{secbuse} we cite variational formulas for
the free energy
from
\mbox{\cite{rass-sepp-p2p,rass-sepp-yilm}}. Here is an outline of the paper:
\begin{longlist}[Section~9.]
\item[Section~\ref{seccif}.] Introduction of the polymer RWRE in a general
context as the positive temperature counterpart of the competition
interface of
last-passage \mbox{percolation}.

\item[Section~\ref{seclg}.] Introduction of the log-gamma polymer. The
shift-invariant log-gamma polymer is formalized in the definition of a
gamma system of weights.

\item[Section~\ref{secratio}.] Limits of ratios of point-to-point partition
functions for the log-gamma polymer.

\item[Section~\ref{secbuse}.] Busemann functions are constructed
from limits of ratios of point-to-point partition functions and used
to solve a variational formula for the limiting free energy.
Duality between tilt and velocity.

\item[Section~\ref{secp2l}.] Limits of ratios of tilted point-to-line
partition functions for the log-gamma polymer. Duality between tilt and
velocity appears again.

\item[Section~\ref{Q-sec}.] Limits of ratios of partition functions yield
convergence of polymer measures to the polymer RWRE. The limit RWRE
has fluctuations of size $n^{2/3}$ under the averaged measure.

\item[Section~\ref{secrwre}.] A stationary, ergodic distribution for the
log-gamma polymer RWRE.

\item[Section~\ref{secapp}.] Several auxiliary results, including a large deviation
bound for the log-gamma polymer and a general ergodic theorem for cocycles.
\end{longlist}

\subsection*{Notation and conventions}
$\mathbb{N}=\{1,2,3,\ldots\}$ and $\mathbb{Z}_+=\{0,1,2,\ldots\}$.
For $n\in\mathbb{N} $, $[n]=\{1,2,\ldots,n\}$. $x\vee y=\max\{x,y\}
$ and
$x\wedge y=\min\{x,y\}$.
On $\mathbb{R} ^2$ the $\ell^1$ norm is $\llvert  x\rrvert _1=\llvert
x_1\rrvert +\llvert  x_2\rrvert $,
the inner product is $x\cdot y=x_1y_1+x_2y_2$,
and inequalities are coordinatewise: $(x_1,x_2)\le(y_1,y_2)$
if $x_r\le y_r$ for $r\in\{1,2\}$.
Standard basis vectors are $e_1=(1,0)$ and $e_2=(0,1)$.
Our random walks live in $\mathbb{Z} _+^2$ and admissible paths
$x_\centerdot=(x_k)_{k=0}^n$ have steps $z_k=x_k-x_{k-1}\in\mathcal
{R}=\{
e_1,e_2\}$.
Points of $\mathbb{Z} _+^2$ are written as $u, v, x, y$ but also as
$(m,n)$ or $(i,j)$.
Weights indexed by a single point do not have the parentheses:
if $x=(i,j)$, then $\eta_x=\eta_{i,j}$.
For $u\le v$ in $\mathbb{Z} _+^2$, $\Pi_{u,v}$ is the set of
admissible paths
from $x_0=u$
to $x_{\llvert  v-u\rrvert _1}=v$.
Limit velocities of these walks lie in the simplex
$\mathcal{U}=\{(u,1-u)\dvtx  u\in[0,1]\}$, whose (relative) interior is denoted
by $\operatorname{int}\mathcal{U}$.
Shift maps $T_v$ act on suitably indexed configurations
$w=(w_x)$ by $(T_vw)_x=w_{v+x}$.
$\mathbb{E} $ and $\mathbb{P} $ refer to the random weights (the
environment), and otherwise
$E^\mu$ denotes expectation under probability\vspace*{1pt} measure $\mu$.
The usual gamma function for $\rho>0$ is
$\Gamma(\rho)=\int_0^\infty x^{\rho-1}e^{-x} \,dx$, and the
$\operatorname{Gamma}(\rho)$
distribution on $\mathbb{R} _+$ is $\Gamma(\rho)^{-1} x^{\rho
-1}e^{-x} \,dx$.
$\Psi_0=\Gamma'/\Gamma$ and $\Psi_1=\Psi_0'$
are the digamma and trigamma
functions.

The reader should be warned that several different partition functions
appear in this paper. They\vspace*{2pt} are all denoted by $Z$ and sometimes
with additional notation such as $\check Z$. It should
be clear from the context which $Z$ is meant.
Each $Z$ is a sum of weights $W(x_\centerdot)$
of paths $x_\centerdot$ from a collection of nearest-neighbor lattice paths.
Associated to each $Z$ is a polymer probability measure $Q$
on paths, $Q\{x_\centerdot\}=Z^{-1}W(x_\centerdot)$.

\section{The polymer random walk in random environment}\label{seccif}

In this section we introduce a random walk in random environment
(RWRE) associated to an underlying directed polymer model.
This walk appears when we
look for the positive temperature counterparts of the
notions of geodesics and
competition interface that appear in last-passage percolation.
Percolation and polymers are discussed in this section in terms
of real weights, without specifying probability distributions.

\subsection{Geodesics and competition interface in last-passage percolation}
We give a quick definition of last-passage percolation, also known as
the zero-temperature polymer.
Let $\{\omega_x\dvtx  x\in\mathbb{Z} _+^2\}$ be a collection of
real-valued weights.
For $u\le v$ in $\mathbb{Z} _+^2$ let $\Pi_{u,v}$ denote\vspace*{1pt} the set of admissible
lattice paths $x_\centerdot=(x_i)_{0\le i\le n}$ with $n=\llvert
v-u\rrvert _1$
that satisfy $x_0=u$,
$x_i-x_{i-1}\in\{e_1,e_2\}$, $x_n=v$.
The last-passage times are defined by
\[
G_{u,v}=\max_{x_\centerdot\in\Pi_{u,v}} \sum
_{i=1}^{\llvert
v-u\rrvert _1} \omega_{x_i}, \qquad u\le v
\mbox{ in } \mathbb{Z} _+^2.
\]
%
A finite path $(x_i)_{0\le i\le n}$ in $\Pi_{u,v}$
is a \emph{geodesic}
between $u$ and $v$ if it is the
maximizing path that realizes $G_{u,v}$, namely,
$G_{u,v}= \sum_{i=1}^{n} \omega_{x_i} $.
Every subpath of a geodesic is also a geodesic.
Let us assume that no two paths of any length have equal sum
of weights so that maximizing paths are unique. This would almost surely
be the case, for example, if the weights are i.i.d. with a
continuous distribution.

It is convenient to construct the geodesic from $u$ to $v$ backward,
utilizing the iteration
\[
G_{u,x}= G_{u,x-e_1}\vee G_{u, x-e_2} +
\omega_x.
\]
%
Start the construction with
$x_n=v$. Suppose the segment $(x_{k},x_{k+1},\ldots, x_n)$ of the geodesic
has been
constructed. If $x_{k}>u$ coordinatewise, set
%
%
\begin{equation}
x_{k-1} =\cases{ x_k-e_1, &\quad if
$G_{u,x_k-e_1}>G_{u,x_k-e_2}$,
\vspace*{3pt}\cr
x_k-e_2, &\quad if $G_{u,x_k-e_1}<G_{u,x_k-e_2}$.} \label{geod}
\end{equation}
If $x_k\cdot e_r=u\cdot e_r$ for either $r=1$ or $r=2$, then
define the remaining segment as $(x_0,\ldots, x_k)=(u+ie_{3-r})_{0\le
i\le k}$.

For a fixed initial point $u\in\mathbb{Z} _+^2$, the \emph{geodesic spanning
tree} $\mathcal T_u$ of the lattice $u+\mathbb{Z} _+^2$ is the union
of all the
geodesics from $u$ to $v$, $v\in u+\mathbb{Z} _+^2$.

The \emph{competition interface} $\varphi=(\varphi_k)_{k\in\mathbb
{Z} _+}$ is
a lattice path on $\mathbb{Z} _+^2$
defined as a function of $\{G_{0,v}\}_{v\in\mathbb{Z} _+^2}$.
It starts at $\varphi_0=0$ and then chooses, at each step, the minimal
$G$-value,
%
%
\begin{equation}
\varphi_{k+1}= \cases{ \varphi_k+e_1, &\quad
if $G_{0,\varphi_k+e_1}<G_{0,\varphi_k+e_2}$,
\vspace*{3pt}\cr
\varphi_k+e_2,
&\quad if $G_{0,\varphi_k+e_1}>G_{0,\varphi_k+e_2}$.} \label{cif}
\end{equation}

The\vspace*{1pt} relationship between $\mathcal T_0$ and $\varphi$ is that $\varphi$
separates the two subtrees $\mathcal T_{0,e_1}, \mathcal T_{0,e_2}$ of
$\mathcal T_0$
rooted at $e_1$ and $e_2$. Since every
$\mathbb{Z} _+^2$ lattice path from $0$ has to go through either $e_1$
or $e_2$,
$\mathcal T_0=\{0\}\cup\mathcal T_{0,e_1}\cup\mathcal T_{0,e_2}$ as a
disjoint union.
For each $n\in\mathbb{Z} _+$, $\varphi_n$ is the unique point such
that $\llvert  \varphi_n\rrvert _1=n$
and for $r\in\{1,2\}$, $\{\varphi_n+ke_r\dvtx  k\in\mathbb{N} \}
\subseteq\mathcal T_{0,e_r}$.
Note that we cannot say which tree contains $\varphi_n$, unless
we know that $\varphi_n-\varphi_{n-1}=e_r$ in which case $\varphi
_n\in\mathcal T_{0,e_r}$.
If we shift $\varphi$ by $(1/2, 1/2)$, then it threads exactly between
the two
trees (Figure~\ref{cif-fig}).

The term competition interface comes from the interpretation that
$\mathcal T_{0,e_1}$ and $\mathcal T_{0,e_2}$ are two competing
clusters or infections
on the lattice \cite{ferr-mart-pime-09,ferr-pime-05}. The model can be~defined dynamically. The clusters
at time $t\in\mathbb{R} _+$ are $\mathcal T_{0,e_r}(t)=\{ v\in
\mathcal T_{0,e_r}\dvtx
G_{0,v}\le t\}$.

%
%
\begin{figure}[t]

\includegraphics{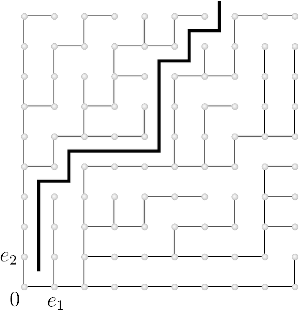}

\caption{The competition interface shifted by $(1/2, 1/2)$
(solid line) separating
the subtrees of $\mathcal T_0$ rooted at $e_1$ and $e_2$.}\label{cif-fig}
\end{figure}


\subsection{Geodesics and competition interface for a positive temperature polymer}
Let $\{V_x\}_{x\in\mathbb{Z} _+^2}$ be positive weights.
Define point-to-point
polymer partition functions
for $u\le v$ in $\mathbb{Z} _+^2$ by
%
%
\begin{equation}
Z_{u,v}=\sum_{x_\centerdot\in\Pi_{u,v}} \prod
_{i=1}^{\llvert
v-u\rrvert _1} V^{-1}_{x_i}
\label{hZ}
\end{equation}
and the polymer measure on the set of paths $\Pi_{u,v}$ by
%
%
\begin{equation}
Q_{u,v}\{x_\centerdot\} =\frac{1}{Z_{u,v}} \prod
_{i=1}^{\llvert
v-u\rrvert _1} V^{-1}_{x_i}, \qquad
x_\centerdot\in\Pi_{u,v}. \label{hQ}
\end{equation}
Our convention is to use reciprocals $V^{-1}_x$ of the weights
in the definitions. The reason is that this way
the weights in the log-gamma polymer are gamma distributed
and features of the beta-gamma algebra arise naturally.

%
%
\begin{remark}\label{hrmk2}
A conventional way of defining polymer partition
functions is
\[
Z^\beta_{u,v}=\sum_{x_\centerdot\in\Pi_{u,v}}
e^{\beta\sum_{i=1}^{\llvert  v-u\rrvert _1} \omega_{x_i}}
\]
%
with an inverse temperature parameter $0<\beta<\infty$. In the
zero-temperature
limit $\beta^{-1}\log Z^\beta_{u,v} \to G_{u,v}$ as $\beta\to
\infty$, and
the polymer measure $Q^\beta_{u,v}$ concentrates on the geodesic(s)
from $u$ to $v$. This is the sense in which last-passage percolation
is the zero-temperature polymer.
See Remark~\ref{grmk2} below for this point for the log-gamma polymer.
\end{remark}

We implement noisy versions of rules (\ref{geod}) and (\ref{cif})
to define positive temperature
counterparts of geodesics and the competition interface.

Fix a base point $u\in\mathbb{Z} _+^2$ and
define a backward Markov transition kernel $\lvec{\pi}^u$ on
the lattice $u+\mathbb{Z} _+^2$ by
$\lvec{\pi}^u(u,u)=1$, and
%
%
\begin{equation}
\qquad \lvec{\pi}^u(x,x-e_r)= \frac{V^{-1}_x Z_{u,
x-e_r}}{Z_{u, x}} =
\frac{ Z_{u, x-e_r}}{Z_{u, x-e_1}+Z_{u, x-e_2}} \qquad\mbox{for $r\in\{1,2\}$}, \label{hbackpi}
\end{equation}
if\vspace*{2pt} both $x$ and $x-e_r$ lie in $u+\mathbb{Z} _+^2$.
The middle formula above gives the correct values on the boundaries
of $u+\mathbb{Z} _+^2$
where there is only one admissible backward step,
$\lvec{\pi}^u(u+ie_r, u+(i-1)e_r)=1$ for $i\ge1$ and $r\in
\{1,2\}$.


For a path $x_\centerdot\in\Pi_{u,v}$ comparison of (\ref{hQ}) and
(\ref{hbackpi}) shows
\[
Q_{u,v}\{x_\centerdot\}=\prod_{i=1}^{\llvert  v-u\rrvert _1}
\lvec{\pi}^u(x_i,x_{i-1}).
\]
So the quenched polymer distribution $Q_{u,v}$ is the distribution of the
backward Markov chain with initial state $v$,
transition $\lvec{\pi}^u$, and absorption at $u$.
The distributions $Q_{u,v}$ are the noisy counterparts of
geodesics. The nesting property of geodesics manifests itself
through conditioning. Let $u<z<w<v$ in $\mathbb{Z} _+^2$.
Let $A_{z,w}$ be the set of paths in $\Pi_{u,v}$ that
go through the points $z$ and $w$. Given $y_\centerdot\in\Pi_{z,w}$,
let $B_{y_\centerdot}$ be the set of paths in $\Pi_{u,v}$
that traverse the path $y_\centerdot$ (i.e., contain $y_\centerdot$
as a subpath). Then
\[
Q_{u,v}(B_{y_\centerdot} \vert A_{z,w}) = Q_{z,w}
\{y_\centerdot\}.
\]
%

Define
the random geodesic spanning tree $\mathcal T_u$ rooted at $u$
by choosing, for each $x\in(u+\mathbb{Z} _+^2)\setminus\{u\}$,
a parent
%
%
\begin{equation}
\gamma(x)=\cases{ x-e_1, &\quad with probability $\lvec{
\pi}^u(x,x-e_1)$,
\vspace*{3pt}\cr
x-e_2, &\quad with
probability $\lvec{\pi}^u(x,x-e_2)$.}
\label{hparent}
\end{equation}

Now that we have the positive temperature counterparts of geodesics, we can
find the positive temperature counterpart of the competition interface
by reference to the tree $\mathcal T_0$ rooted at $0$.
Let $\mathcal T_{0,e_r}$ be the subtree rooted at $e_r$, so that
$\mathcal T_0=\{0\} \cup\mathcal T_{0,e_1}\cup\mathcal T_{0,e_2}$ as
a disjoint union.
The lemma below shows that there is
a well-defined path $X_\centerdot$ that separates the trees $\mathcal T
_{0,e_1}$ and $\mathcal T_{0,e_2}$,
and evolves as a Markov chain in the environment defined by the partition
functions. In other words, this random walk in a random environment
(RWRE) is
the positive temperature analogue of
the competition interface. The picture for $X_\centerdot$
is the same as for $\varphi$ in Figure~\ref{cif-fig}.

%
%
\begin{lemma}\label{hlm3}
\textup{(a)} Given the choices made in (\ref{hparent}),
there is a unique lattice path $(X_n)_{n\in\mathbb{Z} _+}$ with these
properties:
$X_0=0$, $X_n-X_{n-1}\in\{e_1,e_2\}$,
and for each $n$ and $r\in\{1,2\}$, $\{X_n+ke_r\dvtx  k\in\mathbb{N} \}
\subseteq
\mathcal T_{0,e_r}$.

\textup{(b)}
$X_n$ is a Markov chain with transition matrix
%
%
\begin{equation}
\pi_{x,x+e_r}= \frac
{Z^{-1}_{0,x+e_r}}{Z^{-1}_{0,x+e_1}+Z^{-1}_{0,x+e_2}}, \qquad x\in\mathbb{Z} _+^2,
r\in\{1,2\}. \label{hrwre}
\end{equation}
\end{lemma}

\begin{pf} (a) To prove the existence of the path, start with $X_0=0$,
and iterate the
following move: if $\gamma(X_n+e_1+e_2)=X_n+e_r$, set $X_{n+1}=X_n+e_{3-r}$.

(b) Given the path $(X_k)_{k=0}^n$ with $X_n=x$, we choose
$X_{n+1}=x+e_1$ if $\gamma(x+e_1+e_2)=x+e_2$ which happens with
probability
\[
\frac{Z_{0,x+e_2}}{Z_{0,x+e_1}+Z_{0,x+e_2}} = \frac
{Z^{-1}_{0,x+e_1}}{Z^{-1}_{0,x+e_1}+Z^{-1}_{0,x+e_2}}
\]
and similarly for $X_{n+1}=x+e_2$ with the complementary probability.
\end{pf}

A genuine RWRE transition probability satisfies
$\pi_{x,y}(\omega)=\pi_{0,y-x}(T_x\omega)$ for shift mappings
$(T_x)_{x\in\mathbb{Z} _+^2}$ acting on the environments $\omega$.
We augment the space of weights to achieve this.
We need to be precise about the
sets of sites on which various classes of weights are defined.

%
%
\begin{definition}
\label{hga-def}
A collection of positive real weights
\[
(\xi,\eta,\zeta, \check\xi)= \bigl\{ \xi_x, \eta_{x-e_2},
\zeta_{x-e_1}, \check\xi_{x-e_1-e_2}\dvtx  x\in \mathbb{N}^2
\bigr\}
\]
satisfies \emph{north--east} (\emph{NE}) \emph{induction} if
these equations hold for each $x\in\mathbb{N}^2$
%
%
\begin{eqnarray}
\eta_x&=&\xi_x\frac{\eta_{x-e_2}}{\eta_{x-e_2}+\zeta_{x-e_1}}, \qquad
\zeta_x=\xi_x\frac{\zeta_{x-e_1}}{\eta_{x-e_2}+\zeta_{x-e_1}}\quad\mbox{and}
\label{hNE1}
\\
\quad\check\xi_{x-e_1-e_2}&=&\eta_{x-e_2}+
\zeta _{x-e_1}. \label{hNE2}
\end{eqnarray}
\end{definition}

North--east induction simply keeps track of ratios of
partition functions. Take as given the subcollection of weights
$\{ \xi_{i,j}, \eta_{i,0}, \zeta_{0,j}\dvtx  i,j\in\mathbb{N}\}$ on
$\mathbb{Z} _+^2$,
and construct the polymer partition
functions
%
%
\begin{equation}
\qquad Z_{0,v}=\sum_{x_\centerdot\in\Pi_{0,v}} \prod
_{i=1}^{\llvert
v\rrvert _1} V^{-1}_{x_i} \qquad
\mbox{with } V_{i,j}=\cases{ \xi_{i,j}, &\quad$(i,j)\in
\mathbb{N} ^2$,
\vspace*{3pt}\cr
\eta_{i,0}, &\quad$i\in\mathbb{N} $,
$j=0$,
\vspace*{3pt}\cr
\zeta_{0,j}, &\quad$i=0, j\in\mathbb{N} $.} \label{hZ1}
\end{equation}
Then define
%
%
\begin{equation}
\eta_x =\frac{Z_{0,x-e_1}}{Z_{0, x}} \quad\mbox{and}\quad
\zeta_x= \frac{Z_{0,x-e_2}}{Z_{0, x}} \qquad\mbox{for $x\in\mathbb{N}
^2$.} 
\label{htau3}
\end{equation}
Now\vspace*{2pt} the subsystem $(\xi, \eta, \zeta)$ satisfies (\ref{hNE1}), as
can be verified by induction.
To get the full system $(\xi, \eta, \zeta, \check\xi)$ just define
$\check\xi_{x}=\eta_{x+e_1}+\zeta_{x+e_2}$ for $x\in\mathbb{Z} _+^2$.

Note that (\ref{htau3}) is valid also on the boundaries, by the
definition (\ref{hZ1}) of $Z_{0,ke_r}$ for $k\in\mathbb{N} $.
The reason for the distinct notation $\{\eta_{i,0}, \zeta_{0,j}\}$
for boundary weights in~(\ref{hZ1}) is that these are also ratios of partition
functions, just as $\eta_x$ and $\zeta_x$ in (\ref{htau3}). We shall
find that in the interesting log-gamma models, the\vspace*{2pt} boundary weights
$\{\eta_{i,0}, \zeta_{0,j}\}$ are different from the bulk weights
$\{\xi_{i,j}\}$. The role of the $\check\xi$ weights is not clear
yet, but they will become central
in the log-gamma context.

Define the space of environments
%
%
\begin{eqnarray}\label{hspace}
\Omega_{\mathrm{NE}} 
&=&\bigl\{\omega=(\xi,\eta,\zeta, \check\xi)\in
\mathbb{R} _+^{\mathbb{N} ^2+(\mathbb{N} \times\mathbb{Z}
_+)+(\mathbb{Z} _+\times\mathbb{N} )+\mathbb{Z} _+^2}\dvtx
\nonumber\\[-8pt]\\[-8pt]
&&\hspace*{51pt}\mbox{$(\xi,\eta,\zeta, \check\xi)$ satisfies NE induction} \bigr\}.\nonumber
\end{eqnarray}
Translations act via
$T_z\omega=(\xi_{z+\mathbb{N} ^2}, \eta_{z+\mathbb{N} \times
\mathbb{Z} _+},\zeta_{z+\mathbb{Z}
_+\times\mathbb{N} }, \check\xi_{z+\mathbb{Z} _+^2})$ for $z\in
\mathbb{Z} _+^2$,
where we introduced notation $\xi_{z+\mathbb{N} ^2}=\{\xi_{z+x}\}
_{x\in\mathbb{N}
^2}$, and
similarly for the other configurations.

%
%
\begin{definition} The \emph{polymer random walk in random environment}
is a RWRE with environment space $\Omega_{\mathrm{NE}}$
and transition probability
%
%
\begin{equation}
\qquad \pi_{x, x+e_1}(\omega)=\frac{\eta_{x+e_1}}{\eta
_{x+e_1}+\zeta_{x+e_2}} \quad\mbox{and}\quad
\pi_{x, x+e_2}(\omega)=\frac{\zeta_{x+e_2}}{\eta_{x+e_1}+\zeta_{x+e_2}}. \label{hrwre1}
\end{equation}
\end{definition}

This definition is the same as (\ref{hrwre}) with partition functions
(\ref{hZ1}). The quenched path probabilities
$P^\omega$ of this RWRE
started at $x_0=0$ are defined by
%
%
\begin{equation}
P^\omega(X_0=0, X_1=x_1, \ldots,
X_n=x_n) = \prod_{k=1}^n
\pi_{x_{k-1},
x_k}(\omega). \label{hrwre2}
\end{equation}
Distributions of $X_n$ are again related to polymer distributions
and partition functions. Define
\[
\check Z_{0,v} = \sum_{x_\centerdot\in\Pi_{0,v}} \prod
_{i=0}^{\llvert  v\rrvert _1-1} \check\xi^{-1}_{x_i},
\qquad v\in\mathbb{Z} _+^2.
\]
%
In contrast with (\ref{hZ1}), this time the weight at the origin is
included but
the weight at $v$ excluded. From (\ref{hNE2}) and (\ref{hrwre2}) we
derive two
formulas. First, the distribution of $X_n$ is a ratio of partition functions
\[
P^\omega(X_n=x)= \frac{\check Z_{0,x}}{Z_{0,x}} \qquad\mbox{for $x\in
\mathbb{Z} _+^2$ such that $\llvert x\rrvert _1=n$.}
\]
%
Then if the walk is conditioned to go through a point, the distribution
of the path segment is the polymer probability in $\check\xi$ weights:
for $x_\centerdot=(x_k)_{k=0}^n\in\Pi_{0,x_n}$,
\begin{eqnarray*}
&& P^\omega(X_0=0, X_1=x_1, \ldots,
X_n=x_n \vert X_n=x_n)
\\
&&\qquad = \frac{1}{\check Z_{0,x_n}} \prod
_{i=0}^{n-1} \check\xi^{-1}_{x_i}
= \check Q_{0,x_n}\{x_\centerdot\}.
\end{eqnarray*}
%

\section{The log-gamma polymer}\label{seclg}
This section gives a quick definition of the log-gamma polymer
and its Burke property.
Let $0<\lambda<\rho<\infty$.

%
%
\begin{definition}\label{ga-def}
A collection
$(\xi, \eta, \zeta, \check\xi)=
\{ \xi_x, \eta_{x-e_2}, \zeta_{x-e_1}, \check\xi_{x-e_1-e_2}\dvtx  x\in
\mathbb{N}^2\}$
of positive real random variables
is a \emph{gamma system} of weights with parameters $(\lambda, \rho)$ if
the following three properties hold:
\begin{longlist}[(a)]
\item[(a)] NE induction (Definition~\ref{hga-def})
holds:
for each $x\in\mathbb{N}^2$, almost surely,
%
%
\begin{eqnarray}\label{NE}
\eta_x&=&\xi_x\frac{\eta_{x-e_2}}{\eta_{x-e_2}+\zeta_{x-e_1}}, \qquad
\zeta_x=\xi_x\frac{\zeta_{x-e_1}}{\eta_{x-e_2}+\zeta_{x-e_1}}\quad\mbox{and}
\nonumber\\[-8pt]\\[-8pt]
\check\xi_{x-e_1-e_2}&=&\eta_{x-e_2}+
\zeta _{x-e_1}.\nonumber 
\end{eqnarray}

\item[(b)]
The marginal distributions of the variables are
%
%
\begin{equation} \label{ga-s}
\eta_{x}\sim \operatorname{Gamma}(\lambda),\qquad \zeta_{x}\sim
\operatorname{Gamma}(\rho-\lambda)\quad\mbox{and}\quad \xi_{x}, \check\xi_{x} \sim
\operatorname{Gamma}(\rho).\hspace*{-30pt}
\end{equation}

\item[(c)]
The variables
$\{\xi_{i,j}, \eta_{i,0},\zeta_{0,j}\dvtx  i,j\in\mathbb{N}\}$ are mutually
independent.
\end{longlist}

A triple
$(\xi, \eta, \zeta)$ is a gamma system with
parameters $(\lambda, \rho)$ if conditions (a)--(c) are
satisfied without the conditions on $\check\xi$.
\end{definition}

Note that the $\xi$-weights are defined only in the bulk $\mathbb
{N}^2$, while
the $\check\xi$-weights are defined also on the boundaries and at the
origin.
Variables $\eta_x$ and $\zeta_x$ can be thought of as weights on
the edges, while $\xi_x$ and $\check\xi_x$ are weights on the vertices.
Edge weights will also be denoted by
%
%
\begin{equation}
\tau_{x-e_1,x} =\eta_x \quad\mbox{and}\quad
\tau_{x-e_2,x} =\zeta_x. \label{tau}
\end{equation}

A natural way to think about equations (\ref{NE}) for a fixed
$x$ is as a mapping of triples (Figure~\ref{figne}),
%
%
\begin{equation}
(\xi_x, \eta_{x-e_2}, \zeta_{x-e_1}) \mapsto (
\eta_{x}, \zeta_{x}, \check\xi_{x-e_1-e_2}).
\label{gmap}
\end{equation}

%
%
\begin{figure}[b]

\includegraphics{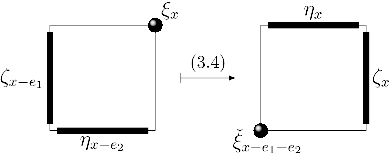}

\caption{Mapping (\protect\ref{gmap}) that involves
variables on a single lattice square. The picture illustrates how
southwest corners are flipped into northeast corners in an inductive
proof of the Burke property.}\label{figne}
\end{figure}

This mapping has the property that if $(\xi_x, \eta_{x-e_2}, \zeta_{x-e_1})$
are independent with marginals (\ref{ga-s}), then the same is true for
$(\eta_{x}, \zeta_{x}, \check\xi_{x-e_1-e_2})$, as can be checked,
for example,
via Laplace transforms.
Consequently a gamma system $(\xi, \eta, \zeta, \check\xi)$ can be
constructed
by repeated application of equations (\ref{NE}) to independent gamma
variables given in (c).

An equivalent way to define the gamma system
is to first construct the following polymer partition
functions from the weights given in (c): for $0\le u<v$ in $\mathbb{Z} _+^2$,
%
%
\begin{equation}
Z_{u,v}=\sum_{x_\centerdot\in\Pi_{u,v}} \prod
_{i=1}^{\llvert
v-u\rrvert _1} V^{-1}_{x_i} \qquad
\mbox{with } V_{i,j}=\cases{ \xi_{i,j}, &\quad$(i,j)\in
\mathbb{N} ^2$,
\vspace*{3pt}\cr
\eta_{i,0}, &\quad$i\in\mathbb{N} $,
$j=0$,
\vspace*{3pt}\cr
\zeta_{0,j}, &\quad$i=0$, $j\in\mathbb{N} $.} \label{gZ1}
\end{equation}
Then, for $x\in\mathbb{Z} _+^2$ and $r\in\{1,2\}$ such that
$x-e_r\in\mathbb{Z}
_+^2$, define
\[
\tau_{x-e_r,x}= \frac{Z_{0,x-e_r}}{Z_{0, x}}.
\]
%
The\vspace*{2pt} weights $\eta_x$ and $\zeta_x$ are then defined via
(\ref{tau}). Now we have a gamma system $(\xi, \eta, \zeta)$,
which can be
augmented to a gamma system $(\xi, \eta, \zeta, \check\xi)$ since
$\check\xi$ is a function of $(\eta, \zeta)$.

Mapping (\ref{gmap}) furnishes the induction step
in the proof of the \emph{Burke property} of the log-gamma polymer
(\cite{sepp-12-aop}, Theorem~3.3): for any down-right path on $\mathbb
{Z} _+^2$,
the $\tau$-variables on the path, the $\xi$ variables strictly to the
northeast
of the path, and the $\check\xi$ variables strictly to the southwest
of the path are all mutually independent with marginal distributions
(\ref{ga-s}). The induction proof begins with the path that consists
of the
$e_1$- and $e_2$-axes. Southwest corners of the path can be flipped
into northeast corners by an application of (\ref{gmap}), as
illustrated in
Figure~\ref{figne}.

As an application of the Burke property, consider the down-right path
consisting of the north and east boundaries of the rectangle
$\{0,\ldots,m\}\times\{0,\ldots,n\}$. Then the Burke property gives us
this statement:
%
%
\begin{eqnarray}\label{gcheckxi}
& \mbox{variables $\{\eta_{i,n}, \zeta_{m,j}, \check\xi
_{i-1,j-1}\dvtx  1\le i\le m, 1\le j\le n\}$}&
\nonumber\\[-8pt]\\[-8pt]
&\mbox{are mutually independent with marginals (\ref{ga-s}).}&\nonumber
\end{eqnarray}
%

%
%
\begin{remark}
\label{grmk2}
Let us revisit the zero temperature limit (Remark~\ref{hrmk2}).
The log-gamma polymer does not have an explicit $\beta$ parameter,
but $\rho$ represents temperature. Replace $\rho$ by $\varepsilon
\rho$ in the
definitions above, so that $\xi_x \sim \operatorname{Gamma}(\varepsilon\rho$).
Then as $\varepsilon\searrow0$, $-\varepsilon\log\xi
_x\Rightarrow\omega_x$, a rate $\rho$ exponential
weight. For
$u<v$ in $\mathbb{N} $,
\begin{eqnarray*}
&& \varepsilon\log Z_{u,v} = \varepsilon\log\sum
_{x_{\centerdot}\in\Pi_{u,v}} \exp \Biggl\{ - \varepsilon^{-1} \sum
_{i=1}^{\llvert  v-u\rrvert _1}\varepsilon\log
\xi_{x_i} \Biggr\}
\\
&&\qquad \Rightarrow\quad \max_{x_{\centerdot}\in\Pi_{u,v}} \sum_{i=1}^{\llvert
v-u\rrvert _1}
\omega_x = G_{u,v} \qquad\mbox{as $\varepsilon
\searrow0$}.
\end{eqnarray*}
In other words, we have convergence in distribution to last-passage
percolation with exponential weights.
\end{remark}

An important function of the polymer path is the exit point or exit
time $t_{\mathrm{exit}}$ of the path from the boundary:
$t_{\mathrm{exit}}={t_{e_1}}\vee{t_{e_2}}$,
%
%
\begin{equation}
{t_{e_1}}=\max\bigl\{ k\ge0\dvtx  \mbox{$x_i=(i,0)$ for $0\le
i\le k$} \bigr\} \label{gxexit}
\end{equation}
and
%
%
\begin{equation}
{t_{e_2}}=\max\bigl\{ \ell\ge0\dvtx  \mbox{$x_j=(0,j)$ for $0
\le j\le\ell$} \bigr\}. \label{gyexit}
\end{equation}
Note that for each path ${t_{e_1}}\wedge{t_{e_2}}=0$.
Partition functions (\ref{gZ1}) based at $0$ can be equivalently
written as
%
%
\begin{equation}
Z_{0,v}=\sum_{x_\centerdot\in\Pi_{0,v}} \Biggl( \prod
_{i=1}^{t_{\mathrm{exit}}}\tau_{x_{i-1}, x
_{i}}^{-1}
\Biggr) \Biggl( \prod_{j=t_{\mathrm{exit}}+1}^{\llvert  v\rrvert _1}
\xi_{
x_j}^{-1} \Biggr), \qquad v\in\mathbb{Z}
_+^2. \label{gZ}
\end{equation}

In a $(\lambda,\rho)$ gamma system we have the means $\mathbb{E}
(\log\eta
_{i,0})=\Psi_0(\lambda)$
and
%
%
\begin{equation}
\mathbb{E} (\log Z_{0,(m,n)}) = -m\Psi_0(\lambda)-n
\Psi_0(\rho -\lambda). \label{ElogZ}
\end{equation}
The second one comes from
%
%
\begin{equation}
\log Z_{0,(m,n)} =-\sum_{i=1}^m
\log\eta_{i,0} - \sum_{j=1}^n
\log\zeta_{m,j}, \label{app4}
\end{equation}
a sum of two correlated sums of i.i.d. random variables.
Above $\Psi_0=\Gamma'/\Gamma$ is the digamma function. It is
strictly increasing on $(0,\infty)$,
with $\Psi_0(0+)=-\infty$ and $\Psi_0(\infty)=\infty$. Its derivative
is the trigamma
function $\Psi_1=\Psi_0'$ that is convex, strictly decreasing, with
$\Psi_1(0+)=\infty$ and $\Psi_1(\infty)=0$.

The asymptotic directions (or velocities) of admissible paths
in $\mathbb{Z} _+^2$ lie in the simplex
$\mathcal{U}=\{\mathbf{u}=(u,1-u)\dvtx  u\in[0,1]\}$.
Fundamental for the behavior of the log-gamma polymer
is a 1--1 correspondence between
velocities $\mathbf{u}\in\mathcal{U}$ and parameters $\lambda\in
[0,\rho]$,
for a fixed $\rho$. The \emph{characteristic direction} for $(\lambda,\rho)$ is
%
%
\begin{equation}
\mathbf{u}_{\lambda,\rho}= \biggl(\frac{\Psi_1(\rho
-\lambda)}{\Psi_1(\lambda)+\Psi_1(\rho-\lambda)}, \frac{\Psi_1(\lambda)}{\Psi_1(\lambda)+\Psi_1(\rho-\lambda
)}
\biggr)\in\mathcal{U}. \label{gchar}
\end{equation}
Conversely,
for $\mathbf{u}=(u,1-u)$, the unique parameter $\theta(u)=\theta
(\mathbf{u}
)\in[0,\rho]$
for which
$\mathbf{u}$ is the characteristic direction is defined by
$\theta(0)=0$, $\theta(1)=\rho$ and
%
%
\begin{equation}
-u\Psi_1\bigl(\theta(u)\bigr)+(1-u)\Psi_1\bigl(\rho-
\theta(u)\bigr)=0 \qquad\mbox{for $u\in(0,1)$}. \label{thetax}
\end{equation}
Function $\theta(u)$ is a strictly increasing bijective
mapping between $u\in[0,1]$ and $\theta\in[0,\rho]$.

The function $\theta(\mathbf{u})$ will appear throughout the paper.
Let us point out that if $(m,n)=c\mathbf{u}$, then the right-hand side
of (\ref{ElogZ}) is minimized by $\lambda=\theta(\mathbf{u})$.
As we shall see, this identifies the limiting free energy for the
log-gamma polymer with i.i.d. $\operatorname{Gamma}(\rho)$
weights.
Notationally, $\lambda, \alpha, \nu$ denote generic parameters
in $[0,\rho]$, while $\theta$ is reserved for the function defined
above.

\section{Limits of ratios of point-to-point partition functions}\label{secratio}
Fix $0<\rho<\infty$.
Let i.i.d. $\operatorname{Gamma}(\rho)$
weights $w=\{w_x\dvtx  x\in\mathbb{Z} _+^2\}$ be given. Define
partition functions
%
%
\begin{equation}
Z_{u,v}=\sum_{x_{ \centerdot}\in\Pi_{u,v}} \prod
_{i=0}^{\llvert  v-u\rrvert _1-1} w_{x_i}^{-1},
\qquad0\le u\le v \mbox{ in } \mathbb{Z} _+^2. \label{Z8}
\end{equation}
Note that the weight at $u$ is included and $v$ excluded,
in contrast with definitions (\ref{hZ}) and (\ref{gZ1}). This is
for convenience, to have clean limit statements below.

Suppose a lattice point $(m,n)\in\mathbb{N} ^2$ tends to infinity in
the first quadrant
so that it has an asymptotic direction in the interior of the quadrant.
Let $\lambda\in(0,\rho)$ be the unique value such that the following
assumption holds:
%
%
\begin{equation}
m\wedge n\to\infty\quad\mbox{and}\quad 
\frac{m}n
\to\frac{\Psi_1(\rho-\lambda)}{\Psi_1(\lambda)}. \label{char5}
\end{equation}
When (\ref{char5}) holds we say that $(m,n)\to\infty$ in the
characteristic direction
of $(\lambda,\rho)$.

The central theorem of this paper constructs gamma systems out of
i.i.d. weights by taking limits of ratios of point-to-point partition
functions.

%
%
\begin{theorem}\label{Zthm1}
On\vspace*{1pt} the probability space of the i.i.d. $\operatorname{Gamma}(\rho)$
weights $w=\{w_x\dvtx  x\in\mathbb{Z} _+^2\}$, there exist random variables
$\{\xi^\lambda_{x}, \eta^\lambda_{x-e_2},\zeta^\lambda_{x-e_1}\dvtx
\lambda\in(0,\rho), x\in\mathbb{N} ^2\}$ with the following properties:
\begin{longlist}[(iii)]
\item[(i)] For each $\lambda\in(0,\rho)$, $(\xi^\lambda, \eta
^\lambda,\zeta^\lambda, w)$
is a gamma system with parameters $(\lambda, \rho)$. Furthermore,\vspace*{1pt}
if on the same probability space there are additional random variables
$(\tilde\xi, \tilde\eta,\tilde\zeta)=\{\tilde\xi_x, \tilde\eta
_{x-e_2},\tilde\zeta_{x-e_1}\dvtx  x\in\mathbb{N} ^2\}$
such\vspace*{1pt} that $(\tilde\xi, \tilde\eta,\tilde\zeta, w)$
is a gamma system
with parameters
$(\nu,\rho)$, then
$ (\tilde\xi, \tilde\eta,\tilde\zeta)= (\xi^\nu,
\eta^\nu,\zeta^\nu
)$ a.s.

\item[(ii)] Suppose a sequence
$(m,n)\to\infty$ in the characteristic direction
of $(\lambda,\rho)$, as defined in (\ref{char5}). Then, for all
$x\in\mathbb{N} \times\mathbb{Z} _+$ and $y\in\mathbb{Z} _+\times
\mathbb{N} $, these almost sure
limits hold:
%
%
\begin{equation}
\eta^\lambda_x=\lim_{(m,n)\to\infty}
\frac{Z_{x,(m,n)}}{Z_{x-e_1,(m,n)}} \quad\mbox{and}\quad \zeta^\lambda_y=\lim
_{(m,n)\to\infty}\frac{Z_{y,(m,n)}}{Z_{y-e_2,(m,n)}} \label{Z82}
\end{equation}
and, furthermore, for all $1\le p<\infty$,
%
\begin{eqnarray}
\label{Z821} \lim_{(m,n)\to\infty}\mathbb{E} \bigl[ \bigl\llvert \log
Z_{x,(m,n)} -\log Z_{x-e_1,(m,n)} - \log\eta^\lambda_x
\bigr\rrvert ^p \bigr] &=&0\quad\mbox{and}
\nonumber\\[-8pt]\\[-8pt]
\lim_{(m,n)\to\infty}\mathbb{E} \bigl[ \bigl\llvert \log
{Z_{y,(m,n)}}-\log{Z_{y-e_2,(m,n)}} - \log\zeta^\lambda_y
\bigr\rrvert ^p \bigr] &=&0.\nonumber
\end{eqnarray}

\item[(iii)] The weights are continuous in $\lambda$, and the edge
weights are monotone in $\lambda$: for each $x$ for which the weights
are defined, almost surely,
%
%
\begin{equation}
\eta^{\lambda_1}_x \le\eta^{\lambda_2}_x \quad
\mbox{and}\quad \zeta^{\lambda_1}_x \ge\zeta^{\lambda_2}_x
\qquad\mbox{for $\lambda_1\le\lambda_2$} \label{Z9}
\end{equation}
and
%
%
\begin{equation}
\eta^{\lambda}_x \to\eta^{\nu}_x,\qquad \zeta
^{\lambda}_x\to\zeta^{\nu}_x,\qquad
\xi^{\lambda}_x\to\xi^{\nu}_x \qquad
\mbox{as $\lambda\to\nu$.} \label{Z11}
\end{equation}
\end{longlist}
\end{theorem}


The rest of this section proves Theorem~\ref{Zthm1}.
The reader not interested in the (rather technical) proof can proceed
to the next
section where these limits are applied to solve a variational problem
for the limiting free energy.

The\vspace*{1pt} proof relies on the following lemma for gamma systems.
Let $(\xi,\eta,\zeta,\check\xi)$ be an $(\alpha,\rho)$-system
according to Definition~\ref{ga-def}. Using the $\check\xi$ weights,
define partition functions
%
%
\begin{equation}
\check Z_{u,v}=\sum_{x_{ \centerdot}\in\Pi_{u,v}} \prod
_{i=0}^{\llvert  v-u\rrvert _1-1} ( \check\xi_{x_i}
)^{-1}, \qquad0\le u\le v \mbox{ in } \mathbb{Z}
_+^2, \label{pZ81}
\end{equation}
and for $x\in\mathbb{N} \times\mathbb{Z} _+$ and $y\in\mathbb{Z}
_+\times\mathbb{N} $ edge
ratio weights
%
%
\begin{equation}
\label{pZ21} \check I_{x,(m,n)}=\frac{\check Z_{x,(m,n)}}{\check Z_{x-e_1,(m,n)}} \quad\mbox{and}\quad
\check J_{y,(m,n)}=\frac{\check Z_{y,(m,n)}}{\check Z_{y-e_2,(m,n)}}.
\end{equation}

%
%
\begin{lemma}\label{Zas-lm3}
Let $0<\lambda< \alpha< \tilde\lambda<\rho$.
Consider two sequences $(m_i,n_i)\to\infty$ and
$(\tilde m_j,\tilde n_j)\to\infty$ in $\mathbb{N} ^2$ such that
\[
\frac{m_i}{n_i} \to\frac{\Psi_1(\rho-\lambda)}{\Psi_1(\lambda)} \quad\mbox{and}\quad \frac{\tilde m_j}{\tilde n_j}
\to\frac{\Psi_1(\rho
-\tilde\lambda
)}{\Psi_1(\tilde\lambda)}.
\]
%
Then for $x\in\mathbb{N} \times\mathbb{Z} _+$ and $y\in\mathbb{Z}
_+\times\mathbb{N} $,
%
%
\begin{equation}
\mathop{\operatorname{\overline{\lim}}}_{i\to\infty} \check I_{x,(m_i,n_i)}
\le\eta_x \le\mathop{\operatorname{\underline{\lim}}}_{j\to\infty}
\check I_{x,(\tilde m_j,\tilde n_j)} \qquad\mbox{a.s.} \label{Zas13}
\end{equation}
and
\[
\mathop{\operatorname{\overline{\lim}}}_{j\to\infty} \check J_{y,(\tilde m_j,\tilde n_j)}
\le \zeta_y \le\mathop{\operatorname{\underline{
\lim}}}_{i\to\infty} \check J_{y,(m_i,n_i)} \qquad\mbox{a.s.}
\]
%
\end{lemma}

\begin{pf}
For notational simplicity
we drop the $i,j$ indices from $(m,n)$ and $(\tilde m, \tilde n)$.
We relate ratios (\ref{pZ21}) to ratios of partition functions with
boundaries.
Let $Z^\mathrm{NE}_{(k,\ell),(m,n)}$ denote a partition function that uses
$\eta$ and $\zeta$ weights on the north and east boundaries of the rectangle
$\{k,\ldots,m\}\times\{\ell,\ldots,n\}$ and $\check\xi$ weights in
the bulk:
\begin{eqnarray*}
Z^\mathrm{NE}_{(k,n),(m,n)} &=& \prod
_{s=k+1}^{m} \frac{1}{\eta_{s,n}},
\\
Z^\mathrm{NE}_{(m,\ell),(m,n)} &=& \prod_{t=\ell+1}^{n}
\frac{1}{\zeta
_{m,t}},
\end{eqnarray*}
and for $0\le k<m$ and $0\le\ell<n$
%
%
\begin{eqnarray}
\label{Z95} Z^\mathrm{NE}_{(k,\ell),(m,n)} &=& \sum
_{i=k}^{m-1} \check Z_{(k,\ell
),(i,n-1)}
\frac{1}{\check\xi_{i,n-1}}\prod_{s=i+1}^{m}
\frac{1}{\eta_{s,n}}
\nonumber\\[-8pt]\\[-8pt]
&&{} + \sum_{j=\ell}^{n-1} \check
Z_{(k,\ell),(m-1,j)} \frac{1}{\check\xi_{m-1,j}}\prod_{t=j+1}^{n}
\frac{1}{\zeta_{m,t}}.\nonumber
\end{eqnarray}
In the last formula $Z^\mathrm{NE}_{(k,\ell),(m,n)}$ is decomposed
according to the entry points $(i,n)$ and $(m,j)$ of the paths
on the north and east boundaries. If the entry is at $(i,n)$, the
first boundary variable encountered is $\eta_{i+1,n}$ associated to
the edge $\{(i,n), (i+1,n)\}$.
The last bulk weight $\check\xi_{i,n-1}$ has to be inserted
explicitly into the formula because $\check Z_{(k,\ell),(i,n-1)} $
does not
include the weight at $(i,n-1)$, by its definition~(\ref{pZ81}).

The corresponding ratio weights on edges are
%
%
\begin{equation}
I_{(k,\ell),(m,n)}=\frac{Z^\mathrm{NE}_{(k,\ell),(m,n)} }{Z^\mathrm
{NE}_{(k-1,\ell
),(m,n)} } \quad\mbox{and}\quad J_{(k,\ell),(m,n)}=
\frac{Z^\mathrm{NE}_{(k,\ell),(m,n)} }{Z^\mathrm
{NE}_{(k,\ell
-1),(m,n)} }. \label{Z9501}
\end{equation}
Due to the reversibility of the shift-invariant setting,
these ratio weights are the same
as the original ratio weights, and thereby do not depend on
$(m,n)$. This is the content of the next lemma.

%
%
\begin{lemma}\label{lm-Z9}
For $0\le k\le m$ and $0\le\ell\le n$
such that the weights below
are defined,
%
%
\begin{equation}
\eta_{k,\ell}= I_{(k,\ell),(m,n)} \quad\mbox {and}\quad
\zeta_{k,\ell}=J_{(k,\ell),(m,n)}. \label{Z9502}
\end{equation}
\end{lemma}

\begin{pf}
When $\ell=n$ in the \mbox{$\eta$-}identity or $k=m$ in the \mbox{$\zeta
$-}identity, the
claims follow from the definitions.
Here is the induction step for $\eta_{k,\ell}$,
assuming the identities have been verified
for the edges $\{(k-1,\ell+1),(k,\ell+1)\}$ and
\mbox{$\{(k,\ell),(k,\ell+1)\}$}, closest to the north and east of the edge
$\{(k-1,\ell),(k,\ell)\}$:
\begin{eqnarray*}
I_{(k,\ell),(m,n)}&=& \frac{\check\xi_{k-1,\ell} Z^\mathrm{NE}_{(k,\ell),(m,n) }}{
Z^\mathrm{NE}_{(k,\ell),(m,n)} + Z^\mathrm{NE}_{(k-1,\ell
+1),(m,n)}}
\\
&=& \check\xi_{k-1,\ell} \biggl( 1 + \frac{Z^\mathrm{NE}_{(k-1,\ell+1),(m,n)} }{Z^\mathrm{NE}_{(k,\ell
+1),(m,n)} } \cdot
\frac{Z^\mathrm{NE}_{(k,\ell+1),(m,n)} }{Z^\mathrm{NE}_{(k,\ell
),(m,n)} } \biggr)^{-1}
\\
&=& \check\xi_{k-1,\ell} \biggl( 1 + \frac{\zeta_{k,\ell+1} }{\eta_{k,\ell+1} }
\biggr)^{-1}
= (\eta_{k,\ell} + \zeta_{k-1,\ell+1}
) \biggl( 1 + \frac{\zeta_{k-1,\ell+1} }{\eta_{k,\ell} } \biggr)^{-1}
\\
&=& \eta_{k,\ell}.
\end{eqnarray*}
The third equality is the induction step. The fourth equality uses
(\ref{NE}) twice.
\end{pf}

We need one more variant of ratio weights, namely the types where the last
step of the path is restricted to either $e_1$ or $e_2$.
Relative to any fixed rectangle $\{k,\ldots,m\}\times\{\ell,\ldots,n\}$,
define the distances of the entrance points of the polymer path
$x_\centerdot\in\Pi_{(k,\ell),(m,n)}$ on the north and east boundaries
to the corner $(m,n)$,
%
%
\begin{equation}
{t^*_{e_1}}=\max\bigl\{ r\ge0\dvtx  \mbox{$x_{m-k+n-\ell
-i}=(m-i,n)$ for
$0\le i\le r$} \bigr\} \label{dualxix}
\end{equation}
and
%
%
\begin{equation}
{t^*_{e_2}}=\max\bigl\{ r\ge0\dvtx  \mbox{$x_{m-k+n-\ell
-j}=(m,n-j)$ for
$0\le j\le r$} \bigr\}. \label{dualxiy}
\end{equation}
For a subset $A$ of paths, write $Z(A)$ for the partition function of
paths restricted
to $A$ (in other words, for the unnormalized polymer measure).
Then define, for $r\in\{1,2\}$,
%
%
\begin{eqnarray}
\label{Z9507} I^{e_r}_{(k,\ell),(m,n)}&=&\frac{ Z^\mathrm{NE}_{(k,\ell
),(m,n)}({t^*_{e_r}}>0)
}{ Z^\mathrm{NE}_{(k-1,\ell),(m,n)}({t^*_{e_r}}>0)}
\quad\mbox{and}
\nonumber\\[-8pt]\\[-8pt]
J^{e_r}_{(k,\ell),(m,n)}&=&\frac{ Z^\mathrm
{NE}_{(k,\ell
),(m,n)}({t^*_{e_r}}>0) }{ Z^\mathrm{NE}_{(k,\ell
-1),(m,n)}({t^*_{e_r}}>0) }.\nonumber
\end{eqnarray}

We are ready to prove Lemma~\ref{Zas-lm3}. We go through the proof of
(\ref{Zas13}),
the case for $\check J$ being the same.
Applying Lemma~\ref{Zcomp-lm1} from the \hyperref[secapp]{Appendix} (to a reversed
rectangle) gives
%
%
\begin{eqnarray}
\label{Z95082} \eta_{k,\ell} + \bigl( I^{e_1}_{(k,\ell),(m+1,n+1)} -
\eta_{k,\ell
} \bigr) & \le&\check I_{(k,\ell),(m,n)}
\nonumber\\[-8pt]\\[-8pt]
& \le& \eta_{k,\ell} + \bigl( I^{e_2}_{(k,\ell),(m+1,n+1)} -
\eta_{k,\ell} \bigr).\nonumber
\end{eqnarray}

Taking (\ref{Z9502}) into consideration, the task is
%
%
\begin{eqnarray}
\label{Z95086}
&& \mathop{\operatorname{\overline{\lim}}}_{(m,n)\to\infty} \bigl\{
I^{e_2}_{(k,\ell),(m+1,n+1)} - I_{(k,\ell),(m+1,n+1)} \bigr\}
\nonumber\\[-8pt]\\[-8pt]
&&\qquad \le0 \le\mathop{\operatorname{\underline{\lim}}}_{(\tilde m,\tilde n)\to
\infty} \bigl\{
I^{e_1}_{(k,\ell
),(\tilde m+1,\tilde n+1)} - I_{(k,\ell),(\tilde m+1,\tilde n+1)} \bigr\}.\nonumber
\end{eqnarray}

We do the first limit for $e_2$. The second is similar. Introduce the parameter
%
%
\begin{equation}
\label{pN} N=\frac{m+n}{\Psi_1(\rho-\lambda)+
\Psi_1(\lambda)} \to\infty
\end{equation}
%
with the property that ${(m,n)}/N \to( \Psi_1(\rho-\lambda), \Psi
_1(\lambda))$.
The first inequality of~(\ref{Z95086})
follows from showing that $\forall\varepsilon>0$ $\exists a>0$ such that
%
%
\begin{equation}
\mathbb{P} \bigl\{ I^{e_2}_{(k,\ell),(m+1,n+1)} \ge I_{(k,\ell),(m+1,n+1)} +
\varepsilon \bigr\} \le2e^{-aN}. \label{Z950835}
\end{equation}

Introduce the quenched path measure $Q^\mathrm{NE}_{(k,\ell),(m,n)}$
that corresponds
to the partition function in (\ref{Z95}):
\begin{eqnarray*}
I^{e_2}_{(k,\ell),(m+1,n+1)} &=&\frac{ Z^\mathrm{NE}_{(k,\ell),(m+1,n+1)}({t^*_{e_2}}>0) }{
 Q^\mathrm{NE}_{(k-1,\ell),(m+1,n+1)}({t^*_{e_2}}>0) \cdot Z^\mathrm
{NE}_{(k-1,\ell
),(m+1,n+1)}}
\\
&\le&  \frac{ I_{(k,\ell),(m+1,n+1)} }{
 Q^\mathrm{NE}_{(k-1,\ell),(m+1,n+1)}({t^*_{e_2}}>0) }.
\end{eqnarray*}
For small enough $\varepsilon_N$
the probability in (\ref{Z950835}) is bounded above by the sum
%
%
\begin{equation}
\label{pZ13} \qquad\mathbb{P} \biggl\{ I_{(k,\ell),(m+1,n+1)} \ge\frac{\varepsilon
}{2\varepsilon_N} \biggr
\} + \mathbb{P} \bigl\{ Q^\mathrm{NE}_{(k-1,\ell
),(m+1,n+1)}\bigl({t^*_{e_1}}>0
\bigr) \ge \varepsilon_N \bigr\}.
\end{equation}
Note that the event in the $Q^\mathrm{NE}$-probability was replaced by
its complement.
A~sequence $0<\varepsilon_N\searrow0$
will be chosen below.

By (\ref{Z9502}) $I_{(k,\ell),(m+1,n+1)}$ has $\operatorname{Gamma}(\alpha$)
distribution, and so the first probability in (\ref{pZ13})
is bounded by $e^{-c{\varepsilon}/{\varepsilon_N}}$.\vspace*{1pt}

We show that the $Q^\mathrm{NE}$-probability in (\ref{pZ13}) is actually
a large deviation by replacing $(m,n)$ with a direction that is
characteristic for $(\alpha,\rho)$. The next lemma contains the
idea for replacing $(m,n)$.

%
%
\begin{lemma} \label{pQlm} Let $(\bar m, \bar n)$ satisfy
$\bar m>m$ and $\ell<\bar n<n$. Then
\[
Q^\mathrm{NE}_{(k,\ell),(m,n)}\bigl({t^*_{e_1}}>0\bigr) =
Q^\mathrm{NE}_{(k,\ell),(\bar m,\bar n)}\bigl({t^*_{e_1}}>\bar m-m\bigr).
\]
%
\end{lemma}

\begin{pf} A path in $\Pi_{(k,\ell),(m,n)}$ that satisfies ${t^*_{e_1}}>0$
must use one of the edges $\{(i,\bar n-1),(i,\bar n)\}$, $k\le i\le m-1$.
Otherwise it hits the east boundary first and ${t^*_{e_1}}=0$.
Decomposing according to this choice of edge and using
definition (\ref{Z95}),
\begin{eqnarray*}
Q^\mathrm{NE}_{(k,\ell),(m,n)}\bigl({t^*_{e_1}}>0\bigr) &=&\sum
_{i=k}^{m-1} \check Z_{(k,\ell),(i,\bar n-1)}
\frac{1}{\check\xi_{i,\bar n-1} } \cdot\frac{Z^\mathrm{NE}_{(i,\bar n),(m,n)}}{Z^\mathrm{NE}_{(k,\ell
),(m,n)}}.
\end{eqnarray*}
By Lemma~\ref{lm-Z9} the last ratio does not depend on $(m,n)$,
and $(m,n)$ can be replaced by
$(\bar m,\bar n)$. This moves the northeast corner in definition
(\ref{Z95}) to $(\bar m,\bar n)$, as well as the reference point
of ${t^*_{e_1}}$ in (\ref{dualxix}). Since the sum still
runs up to $m-1$, it now represents
paths in $\Pi_{(k,\ell),(\bar m,\bar n)}$ that hit the north boundary
to the left of $(m, \bar n)$.
This proves Lemma~\ref{pQlm}.
\end{pf}

Take
%
%
\begin{equation}
\label{pbar-mn} (\bar m, \bar n) = \bigl(\bigl\lfloor{N \Psi_1(\rho-
\alpha)}\bigr\rfloor+k-1, \bigl\lfloor{N \Psi_1(\alpha)}\bigr\rfloor+
\ell \bigr),
\end{equation}
essentially the characteristic direction for
$(\alpha,\rho)$.
Since $\lambda<\alpha$ and $\Psi_1$ is strictly decreasing,
there exists $\gamma>0$ such that for large enough $N$,
$\bar m\ge m+1+N\gamma$ and $\bar n\le n-N\gamma$. Put $\varepsilon
_N=e^{-\delta_1\gamma N}$ for a small enough $\delta_1>0$. Then
for large enough~$N$,
%
%
\begin{eqnarray}
\label{p8}
&& \mathbb{P} \bigl\{ Q^\mathrm{NE}_{(k-1,\ell
),(m+1,n+1)}
\bigl({t^*_{e_1}}>0\bigr) \ge \varepsilon_N \bigr\}
\nonumber\\[-8pt]\\[-8pt]
&&\qquad \le \mathbb{P} \bigl\{ Q^\mathrm{NE}_{(k-1,\ell),(\bar m, \bar
n)}
\bigl({t^*_{e_1}}>N\gamma\bigr) \ge e^{-\delta_1\gamma N} \bigr\} \le
e^{-c_1\gamma N}.\nonumber
\end{eqnarray}
The last inequality came from Lemma~\ref{ldp-lm1} in
the \hyperref[secapp]{Appendix}
where we can take $\kappa_N=1$ and $\delta\le\gamma$.
Both probabilities
in (\ref{pZ13}) have been shown to decay
exponentially in $N$, and consequently (\ref{Z950835}) holds.
This completes the proof of Lemma~\ref{Zas-lm3}.
\end{pf}

Turning to the proof of Theorem~\ref{Zthm1},
we begin by showing the a.s. convergence in (\ref{Z82})
for a fixed sequence and fixed $\lambda$. Later, when we finish the
proof of Theorem~\ref{Zthm1}, the comparisons of Lemma~\ref{Zas-lm3}
allow us to extend the limit to all sequences with an asymptotic direction.
Define ratio variables by
%
%
\begin{equation}
\eta_{x, (m,n)}=\frac{Z_{x,(m,n)}}{Z_{x-e_1,(m,n)}} \quad\mbox{and}\quad
\zeta_{y,(m,n)}= \frac{Z_{y,(m,n)}}{Z_{y-e_2,(m,n)}} \label{p9}
\end{equation}
for $x\in\mathbb{N} \times\mathbb{Z} _+$ and $y\in\mathbb{Z}
_+\times\mathbb{N} $.

%
%
\begin{proposition}\label{Zas-pr1}
Fix $0<\lambda<\rho$ and fix
a sequence $(m,n)\to\infty$
as in (\ref{char5}). Then for all $x\in\mathbb{N} \times\mathbb{Z}
_+$ and $y\in\mathbb{Z}
_+\times\mathbb{N} $
the almost sure limits
%
%
\begin{equation}
\eta_x=\lim_{(m,n)\to\infty} \eta_{x, (m,n)} \quad
\mbox{and}\quad \zeta_y=\lim_{(m,n)\to\infty}
\eta_{y, (m,n)} \label{Zas22}
\end{equation}
exist and have distributions $\eta_x \sim\operatorname
{Gamma}(\lambda)$ and
$\zeta_y \sim\operatorname{Gamma}(\rho-\lambda)$.
\end{proposition}

\begin{pf}
We treat the case of the $\eta$ variables, the case for $\zeta$ being
identical.
For a while, until otherwise indicated, we are considering
a fixed sequence of lattice points that satisfies $(m,n)\to\infty$
as in (\ref{char5}). To avoid extra notation we refrain from
indexing the lattice points, as in $(m_k,n_k)$. Later we can improve
the result so that the limit only depends on $\lambda$ and not
on the particular sequence
$(m,n)\to\infty$.

We show that
for $0<s<\infty$ the
distribution functions
%
%
\begin{eqnarray}\label{pG}
G^*(s)&=&\mathbb{P} \Bigl\{ \mathop{\operatorname{\overline{\lim }}}_{(m,n)\to\infty}
\eta _{x,(m,n)} \le s \Bigr\} \quad\mbox{and}
\nonumber\\[-8pt]\\[-8pt]
G_*(s)&=&\mathbb{P} \Bigl\{ \mathop{
\operatorname{\underline{\lim }}}_{(m,n)\to\infty} \eta _{x,(m,n)} \le s \Bigr\}\nonumber
\end{eqnarray}
satisfy $G^*(s)=G_*(s)= F_\lambda(s) $
where
\[
F_\lambda(s) =\Gamma(\lambda)^{-1}\int_0^s
t^{\lambda-1}e^{-t} \,dt
\]
is the c.d.f. of the $\operatorname{Gamma}(\lambda)$ distribution. Since
$\mathop{\operatorname{\underline{\lim}}}\eta_{x,(m,n)} \le
\mathop{\operatorname{\overline{\lim}}}\eta_{x,(m,n)}$, this suffices
for the conclusion. Working with the distributions allows us to use
any particular construction of the processes.

Let $\{U_{i,j}\}$ be i.i.d. $\operatorname{Uniform}(0,1)$ random variables.
For $i,j\in\mathbb{N} $ and $\alpha\in(0,\rho)$ define
%
%
\begin{equation}
\bar\eta^\alpha_{i,0}=F_\alpha^{-1}(U_{i,0})
\quad\mbox{and}\quad \bar\zeta^\alpha_{0,j} =
F_{\rho-\alpha}^{-1}(U_{0,j}). \label
{Z9-coup1}
\end{equation}
This gives\vspace*{1pt} coupled weights $\bar\eta^\alpha_{i,0} \sim
\operatorname{Gamma}(\alpha)$
on the south boundary and
$\bar\zeta^\alpha_{0,j} \sim \operatorname{Gamma}(\rho-\alpha)$ on the west boundary
of the positive quadrant.
For the bulk weights take an i.i.d. collection
$\{\sigma_x\}_{x\in\mathbb{N} ^2}$ of $\operatorname{Gamma}(\rho)$ weights
independent of
$\{U_{i,j}\}$.

As mentioned after Definition~\ref{ga-def}, the mutually independent initial
weights $\{\sigma_{i,j}, \bar\eta^\alpha_{i,0},\bar\zeta^\alpha
_{0,j}\dvtx  i,j\in\mathbb{N}\}$
can\vspace*{1pt} be extended to the full
gamma $(\alpha, \rho)$
system $(\sigma, \bar\eta^\alpha, \bar\zeta^\alpha, \check
\sigma
^{[\alpha]})$.
The construction preserves monotonicity of the edge weights,
so that
\[
\bar\eta^\alpha_{i,j}\le\bar\eta^\nu_{i,j}\quad
\mbox{and}\quad \bar \zeta^\alpha_{i,j}\ge\bar\zeta^\nu_{i,j}
\qquad\mbox{for $\alpha\le\nu$.}
\]
%
Superscript $[\alpha]$ reminds us that even though
the variables $\{\check\sigma^{[\alpha]}_{i,j}\}_{i,j\ge0}$ are
i.i.d. $\operatorname{Gamma}(\rho)$ for each $\alpha\in(0,\rho)$, they were
computed from $\alpha$-boundary
conditions. Define partition functions
%
%
\begin{equation}
\check Z^{[\alpha]}_{u,v}=\sum_{x_{ \centerdot}\in
\Pi_{u,v}}
\prod_{i=0}^{\llvert  v-u\rrvert _1-1} \bigl( \check \sigma
^{[\alpha]}_{x_i} \bigr)^{-1}, \qquad0\le u\le v
\mbox{ in } \mathbb{Z} ^2, \label{Z81}
\end{equation}
and edge
ratio weights
%
%
\begin{equation}
\label{Z21} \check I^{{[\alpha]}}_{x,(m,n)}=\frac{\check Z^{{[\alpha
]}}_{x,(m,n)}}{\check Z^{{[\alpha]}}_{x-e_1,(m,n)}} \quad
\mbox{and}\quad \check J^{{[\alpha]}}_{y,(m,n)}=\frac{\check Z^{{[\alpha
]}}_{y,(m,n)}}{\check Z^{{[\alpha]}}_{y-e_2,(m,n)}}.
\end{equation}
For each $\alpha\in(0,\rho)$, we have equality in distribution of
processes
%
%
\begin{equation}\label{pd-eq}
\bigl\{ \check I^{{[\alpha]}}_{(i+1,j), (m,n)}, \check J^{{[\alpha
]}}_{(i,j+1),(m,n)}, \check\sigma^{[\alpha]}_{i,j} \bigr\}
\stackrel{d}= \{ \eta_{(i+1,j), (m,n)}, \zeta_{(i,j+1),(m,n)}, w _{i,j}
\}.\hspace*{-30pt} 
\end{equation}
These processes are indexed by $\{(i,j),(m,n)\in\mathbb{Z} _+^2\dvtx
(m,n)\ge
(i+1,j+1)\}$.
The equality in distribution comes from identical constructions applied
to i.i.d. $\operatorname{Gamma}(\rho)$ weights: on the left to $\check\sigma
^{[\alpha]}$, on
the right to $w$. Now in (\ref{pG}) we can use
any process $\{ \check I^{{[\alpha]}}_{x,(m,n)} \}$.\vspace*{1pt}

For\vspace*{2pt} any $0<\alpha_1< \lambda< \alpha_2<\rho$, applying
Lemma~\ref{Zas-lm3} to two gamma systems $(\sigma, \bar\eta
^{\alpha
_1}, \bar\zeta^{\alpha_1}, \check\sigma^{[{\alpha_1}]})$ and
$(\sigma, \bar\eta^{\alpha_2}, \bar\zeta^{\alpha_2}, \check
\sigma
^{[{\alpha_2}]})$
gives
%
%
\begin{equation}
\qquad \mathop{\operatorname{\underline{\lim}}}_{(m,n)\to\infty} \check
I^{ [\alpha_1]}_{(k,\ell
),(m,n)} \ge \bar\eta^{\alpha_1}_{k,\ell}\quad
\mbox{and}\quad \mathop{\operatorname{\overline{\lim}}}_{(m,n)\to\infty} \check
I^{ [\alpha_2]}_{(k,\ell
),(m,n)} \le\bar\eta^{\alpha_2}_{k,\ell}\qquad
\mbox{a.s.} \label{pZas13}
\end{equation}
By the equality in distribution (\ref{pd-eq}),
\[
G_*(s)=\mathbb{P} \Bigl\{ \mathop{\operatorname{\underline{\lim
}}}_{(m,n)\to\infty} \check I^{ [\alpha
_1]}_{(k,\ell),(m,n)} \le s \Bigr\} \le
\mathbb{P} \bigl\{ \bar\eta^{\alpha_1}_{k,\ell} \le s\bigr\} =
F_{\alpha
_1}(s)\searrow F_\lambda(s)
\]
as $\alpha_1\nearrow\lambda$,
and
\[
G^*(s)=\mathbb{P} \Bigl\{ \mathop{\operatorname{\overline{\lim
}}}_{(m,n)\to\infty} \check I^{ [\alpha
_2]}_{(k,\ell),(m,n)} \le s \Bigr\} \ge
F_{\alpha_2}(s)\nearrow F_\lambda(s)\qquad\mbox{as $\alpha
_2\searrow\lambda$}.
\]
This gives
$F_\lambda(s)\le G^*(s)\le G_*(s)\le F_\lambda(s)$ and
completes the proof of Proposition~\ref{Zas-pr1}.
\end{pf}

Proposition~\ref{Zas-pr1} gave
the a.s. convergence of ratios along a fixed sequence and
for a given $\lambda\in(0,\rho)$.
Next we construct
a system
of weights $(\xi,\eta,\zeta,w)$
from the limits (\ref{Zas22}) by defining
\[
\xi_x=\eta_x+\zeta_x\qquad\mbox{for
$x\in\mathbb{N} ^2$.}
\]
%

%
%
\begin{proposition}
\label{pPr-ga}
The collection $(\xi,\eta,\zeta,w)$ is a gamma system 
with
parameters $(\lambda, \rho)$, that is, it satisfies Definition~\ref{ga-def}.
\end{proposition}

\begin{pf}
Equations (\ref{NE}) follow from the limits (\ref{Zas22}) and
\[
w_x= \frac{Z_{x+e_1,(m,n)}+Z_{x+e_2,(m,n)}}{Z_{x,(m,n)}}.
\]

By the equality in distribution in (\ref{pd-eq}), it also follows that
the limits in (\ref{pZas13}) exist,
%
%
\begin{equation}
\check I^{[\alpha]}_{k,\ell}= \lim_{(m,n)\to\infty} \check
I^{
[\alpha]}_{(k,\ell),(m,n)} \qquad\mbox{a.s.} \label{Zas131}
\end{equation}
%

Let $0<\alpha_1< \lambda< \alpha_2<\rho$. Utilizing (\ref{pd-eq}),
(\ref{pZas13}) and (\ref{Zas131}),
\begin{eqnarray*}
(\eta, w)&\stackrel{d}=&\bigl(\check I^{[\alpha_1]}, \check\sigma
^{[\alpha
_1]}\bigr) \ge \bigl(\bar\eta^{\alpha_1}, \check
\sigma^{[\alpha_1]}\bigr) \mathop{\longrightarrow}\limits
_{\alpha_1\nearrow\lambda} \bigl(\bar
\eta^{\lambda}, \check\sigma^{[\lambda]}\bigr)
\end{eqnarray*}
and
\begin{eqnarray*}
 (\eta, w)&\stackrel{d}=&\bigl(\check I^{[\alpha_2]}, \check
\sigma^{[\alpha_2]}\bigr) \le \bigl(\bar\eta^{\alpha_2}, \check
\sigma^{[\alpha_2]}\bigr) \mathop{\longrightarrow}\limits
_{\alpha_2\searrow\lambda} \bigl(\bar
\eta^{\lambda}, \check\sigma^{[\lambda]}\bigr).
\end{eqnarray*}
The inequalities and the convergence are a.s. and coordinatewise.
The convergence follows from the continuity of definitions
(\ref{Z9-coup1}) in $\alpha$ and the continuity in equations
(\ref{NE}) that inductively define the
$(\bar\eta^{\alpha}, \bar\zeta^{\alpha}, \check\sigma^{[\alpha
]}) $ weights.
The consequence is that
%
%
\begin{equation}
(\eta, w)\stackrel{d}=\bigl(\bar\eta^{\lambda}, \check \sigma
^{[\lambda]}\bigr). \label{pd-eq3}
\end{equation}

Equations
$\zeta_x=w_{x-e_2}-\eta_{x-e_2+e_1}$ and
$\xi_x=\eta_x+\zeta_x$ map $(\eta, w)$ to the full system
$(\xi,\eta,\zeta,w)$. The same mapping applied to the
right-hand side of (\ref{pd-eq3}) recreates the system
$(\sigma, \bar\eta^{\lambda}, \bar\zeta^{\lambda}, \check
\sigma
^{[\lambda]})$,
which we know to be a $(\lambda,\rho)$ gamma system by its construction
below (\ref{Z9-coup1}).
\end{pf}

\begin{pf*}{Proof of Theorem~\ref{Zthm1}}
Fix a countable dense
subset $D$ of $(0,\rho)$ and \mbox{$\forall\lambda\in D$} a sequence
$(m,n)\to\infty$ that satisfies (\ref{char5}).
By Propositions
\ref{Zas-pr1} and~\ref{pPr-ga},
we can use limits (\ref{Z82}) along these particular sequences to
define, almost surely, $(\lambda, \rho)$ gamma systems
$(\xi^\lambda, \eta^\lambda,\zeta^\lambda,w)$ for $\lambda
\in D$.
Monotonicity (\ref{Z9}) is satisfied a.s. for $\lambda_1,\lambda
_2\in D$
by Lemma~\ref{Zas-lm3}. [The point is that the $\check Z$ partition functions
in~(\ref{pZ81}) are the same for all systems $(\xi^\lambda, \eta
^\lambda,\zeta^\lambda,w)$.]

Monotonicity and known gamma distributions give also the limits in
(\ref{Z11})
when $\lambda\to\nu$ in $D$. For example, suppose $\lambda\nearrow
\nu$ in $D$. Then\vspace*{1pt} $\lim_{\lambda\nearrow\nu} \eta^\lambda_x\le
\eta^\nu_x$,
but both are $\operatorname{Gamma}(\nu$) distributed and hence coincide a.s. The limit
$\xi^{\lambda}_x\to\xi^{\nu}_x$ comes from the limits of $\eta$
and $\zeta$ and
$\xi^{\lambda}_x=\eta^{\lambda}_x + \zeta^{\lambda}_x$.

Extend the weights to all $\lambda\in(0,\rho)$ by defining
%
%
\begin{equation}
\eta^\lambda_x=\inf\bigl\{ \eta^\nu_x\dvtx
\nu\in D\cap (\lambda,\rho)\bigr\} =\sup\bigl\{ \eta^\alpha_x\dvtx
\alpha\in D\cap(0,\lambda)\bigr\} \label
{Zas622}
\end{equation}
with the obvious counterpart for $\zeta^\lambda_x$ and then
$\xi^{\lambda}_x=\eta^{\lambda}_x + \zeta^{\lambda}_x$.
The inf and the sup in (\ref{Zas622}) must agree a.s. because (i)
the sup is not above
the inf on account of the monotonicity for $\lambda\in D$, and (ii)
they are both
$\operatorname{Gamma}(\lambda)$ distributed. By the same reasoning, for $\lambda\in
D$ definition
(\ref{Zas622}) gives a.s. back the same value $\eta^\lambda_x$ as
originally constructed.

To check that the new system $(\xi^\lambda, \eta^\lambda,\zeta
^\lambda,w)$ is a $(\lambda,\rho)$ gamma system, fix a sequence
$D\ni\alpha_i\nearrow\lambda$, and observe that equations (\ref
{NE}) are preserved
by limits, and the correct distributions come also through the limit.
Extending
properties (iii) utilizes monotonicity again.
Limits (\ref{Z82}) of ratios for arbitrary sequences, including for
$\lambda\notin D$, come from the comparisons of
Lemma~\ref{Zas-lm3} with the sequences fixed in the beginning of this proof.

The uniqueness in part (i) follows from Lemma~\ref{Zas-lm3} because
the limits
(\ref{Z82}) imply that $\eta^{\alpha_1}_x\le\tilde\eta_x\le
\eta
^{\alpha_2}_x$
for all $\alpha_1<\nu<\alpha_2$.

As the last item we
prove the $L^p$ convergence (\ref{Z821}). Let $\eta_{x, (m,n)}$ and
$\zeta_{y, (m,n)}$ be as in (\ref{p9}). It suffices to show that for
each $p\in[1,\infty)$,
there exists a finite constant $C(p)$ such that
%
%
\begin{equation}
\label{Lp3} \mathbb{E} \bigl[ \llvert \log \eta_{x,
(m,n)} \rrvert
^p \bigr] \le C(p) \qquad\mbox{for all $(m,n)$ in the sequence.}
\end{equation}
The argument for $\zeta_{y, (m,n)}$ is analogous, or comes by transposition.
The proof splits into separate bounds for plus and minus parts. The
plus part is quick.
\[
\frac{Z_{x,(m,n)}}{Z_{x-e_1,(m,n)}}=\frac{Z_{x,(m,n)}}{w
_{x-e_1}^{-1} ( Z_{x,(m,n)}+Z_{x-e_1+e_2,(m,n)})} \le w_{x-e_1}
\]
from which, for all $x$, $(m,n)$ and $1\le p<\infty$,
\[
\mathbb{E} \bigl[ \bigl( \log^+ \eta_{x, (m,n)}
\bigr)^p \bigr] \le C(p) <\infty.
\]

For the minus part, pick $\alpha\in(0,\lambda)$ and $\varepsilon
>0$. In the next derivation use distributional equality (\ref
{pd-eq}), bring in the ratio variables (\ref{Z9501}) and (\ref
{Z9507}) with north--east boundaries with parameter $\alpha$, and
finally use the Schwarz inequality and $\check I^{{[\alpha]}}_{x,
(m,n)}\ge I^{e_1}_{x,(m+1,n+1)}$ from (\ref{Z95082}):
%
%
\begin{eqnarray}
\qquad&& \mathbb{E} \bigl[ \bigl( \log^- \eta_{x, (m,n)}\bigr)^p
\bigr]\nonumber
\\
&&\qquad = 
\mathbb{E} \bigl[ \bigl( \log^- \check
I^{{[\alpha]}}_{x, (m,n)}\bigr)^p \bigr]
\nonumber
\\
&&\qquad = \mathbb{E} \bigl[ \bigl( \log^- \check I^{{[\alpha]}}_{x,
(m,n)}
\bigr)^p, I^{e_1}_{x,(m+1,n+1)} \le(1-\varepsilon)
I_{x,(m+1,n+1)} \bigr]\nonumber
\\
&&\quad\qquad{}
+ \mathbb{E} \biggl[ \biggl( \log\frac{1}{ \check I^{{[\alpha]}}_{x,
(m,n)}}
\biggr)^p, \check I^{{[\alpha]}}_{x, (m,n)}\le1,
\nonumber
\\
&&\hspace*{60pt} I^{e_1}_{x,(m+1,n+1)} > (1-\varepsilon)
I_{x,(m+1,n+1)} \biggr]
\nonumber
\\
%
&&\qquad \le \bigl\{ \mathbb{E} \bigl[ \bigl( \log^- \check
I^{{[\alpha]}}_{x,
(m,n)}\bigr)^{2p} \bigr] \bigr
\}^{1/2}
\bigl\{\mathbb{P} \bigl( I^{e_1}_{x,(m+1,n+1)}
\le(1-\varepsilon) I_{x,(m+1,n+1)} \bigr) \bigr\}^{1/2}
\label{Lp8}
\\
&&\quad\qquad{} + \mathbb{E} \biggl[ \biggl\llvert \log\frac{1}{ I_{x,(m+1,n+1)} }\biggr
\rrvert ^p \biggr] +\log\frac{1}{1-\varepsilon}. \label{Lp9}
\end{eqnarray}
By Lemma~\ref{lm-Z9} $I_{x,(m+1,n+1)}$ is a $\operatorname{Gamma}(\alpha$) variable,
and consequently line (\ref{Lp9}) is a constant, independent of $x$
and $(m,n)$.

It remains to show that line (\ref{Lp8}) is bounded by a constant.
From
\begin{eqnarray*}
I^{e_1}_{x,(m+1,n+1)} &=&\frac{ Z^\mathrm
{NE}_{x,(m+1,n+1)}({t^*_{e_1}}>0) }{
 Z^\mathrm{NE}_{x-e_1,(m+1,n+1)}({t^*_{e_1}}>0)}
\\
&\ge&\frac{ Z^\mathrm{NE}_{x,(m+1,n+1)} Q^\mathrm
{NE}_{x,(m+1,n+1)}({t^*_{e_1}}>0)
}{ Z^\mathrm{NE}_{x-e_1,(m+1,n+1)} }
\\
&=& I_{x,(m+1,n+1)} Q^\mathrm{NE}_{x,(m+1,n+1)}\bigl({t^*_{e_1}}>0
\bigr)
\end{eqnarray*}
and a switch to complements, we deduce that the probability on line
(\ref{Lp8}) is bounded by
%
%
\begin{equation}
\label{Lp10} \mathbb{P} \bigl\{ Q^\mathrm {NE}_{x,(m+1,n+1)}
\bigl({t^*_{e_2}} >0\bigr) \ge\varepsilon \bigr\}.
\end{equation}
This probability can be shown to be bounded by $e^{-aN}$ for a constant
$a>0$ exactly as was done for probability (\ref{p8}), where $N$ is
defined by (\ref{pN}) and is proportional to both $m$ and $n$. This
time $\alpha<\lambda$, and so the characteristic direction $(\bar m,
\bar n) $ for $(\alpha,\rho)$ defined as in (\ref{pbar-mn})
satisfies $\bar m< m-N\gamma$ and $\bar n> n+N\gamma$ for some
$\gamma>0$. Qualitatively speaking this means that in (\ref{Lp10})
the direction $(m,n)$ proceeds too fast along the $e_1$-direction,
compared with the characteristic direction, and thereby renders the
event ${t^*_{e_2}}>0$ a deviation.

Last we need to control the moment on line (\ref{Lp8}). Let $x=(k,\ell
)\in\mathbb{Z} _+^2$. From the definition of the ratio weights in
(\ref{Z21}) and the partition functions in (\ref{Z81}), with superscript
$[\alpha]$ dropped to simplify notation,
\begin{eqnarray*}
\frac{1}{ \check I^{{[\alpha]}}_{x, (m,n)}} &=& \frac{\check
Z_{x-e_1,(m,n)}}{\check Z_{x,(m,n)}} = \sum_{b=0}^{n-\ell}
\Biggl( \prod_{j=0}^b \check\sigma
_{x-e_1+je_2}^{-1} \Biggr) \frac{\check Z_{x+be_2,(m,n)}}{\check
Z_{x,(m,n)}}
\\
&\le& \sum_{b=0}^{n-\ell} \check
\sigma_{x-e_1+be_2}^{-1} \prod_{j=0}^{b-1}
\frac{ \check\sigma_{x+je_2}}{ \check\sigma
_{x-e_1+je_2}} \le2n\cdot e^{\max_{0\le b\le n} S_b} \cdot\max_{0\le b\le n}
\check\sigma_{x-e_1+be_2}^{-1},
\end{eqnarray*}
where $S_t=\sum_{j=0}^{t-1}(\log{ \check\sigma_{x+je_2}}-\log{
\check\sigma_{x-e_1+je_2}})$
is a sum of mean-zero i.i.d. variables with all moments.
Consequently
\begin{eqnarray*}
\mathbb{E} \biggl\llvert \log^+\frac{1}{ \check I^{{[\alpha]}}_{x,
(m,n)}}\biggr\rrvert ^{2p}
&\le& C\log n + \mathbb{E} \Bigl[ \max_{0\le b\le n}
\llvert S_b\rrvert ^{2p} \Bigr] + \mathbb{E} \Bigl[ \max
_{0\le b\le n} \llvert \log\check \sigma_{be_2}\rrvert
^{2p} \Bigr]
\\
&\le& Cn^{2p} \le CN^{2p}.
\end{eqnarray*}

Combining the two last paragraphs shows that
\[
\mbox{line (\ref{Lp8})} \le CN^{2p} e^{-aN}\le C(p).
\]
Combining all the bounds verifies (\ref{Lp3}) and thereby the $L^p$
convergence in (\ref{Z821}).
\end{pf*}

\section{Busemann functions and a variational characterization of the free energy}
\label{secbuse}
In this section we turn the
limits of ratios of point-to-point partition functions
into 
Busemann functions, and use these to solve a variational formula
for the limiting free energy.
The parts from this section needed for the sequel are definition
(\ref{tilt4}) of the velocity $\mathbf{u}(h)$ associated to a tilt $h$,
and the
large deviation bound~(\ref{vldp3}). The latter is needed for the
proofs in Section~\ref{secp2l}.

We consider briefly general i.i.d. weights $w=(w_x)_{x\in\mathbb{Z} _+^2}$
on a probability
space $(\Omega, \mathfrak{S}, \mathbb{P} )$ assumed to satisfy
%
%
\begin{equation}
\exists\varepsilon>0\dvt\qquad\mathbb{E} \bigl( \llvert \log w_0
\rrvert ^{2+\varepsilon} \bigr)<\infty. \label{vmom}
\end{equation}
Later we specialize
back to $w_0 \sim \operatorname{Gamma}(\rho)$.
It is convenient to use exponential
Boltzmann--Gibbs factors.
Let $p(e_1)=p(e_2)=1/2$ be the kernel of the background random walk $X_n$
with expectation $E$ and initial point $X_0=0$. Define the potential
$g(w)=-\log w_0+\log2$.
In this notation the point-to-point partition function (\ref{Z8}) is
\[
Z_{0,v}=E \bigl[ e^{\sum_{k=0}^{n-1} g(T_{X_k}\omega)}, X_n=v \bigr], \qquad n=
\llvert v\rrvert _1.
\]
%
Introduce a tilted point-to-line partition function
%
%
\begin{equation}
\qquad Z^h_{0,(N)}=E \bigl[ e^{\sum_{k=0}^{N-1}
g(T_{X_k}\omega
) + h\cdot X_N} \bigr], \qquad
\mbox{$h=(h_1,h_2)\in\mathbb{R} ^2$ and $N\in
\mathbb{N} $.} \label{vZ2}
\end{equation}

The set of limit velocities for admissible walks
in $\mathbb{Z} _+^2$ is $\mathcal{U}=\{(u,1-u)\dvtx  0\le u\le1\}$, with
relative interior
$\operatorname{int}\mathcal{U}=\{(u,1-u)\dvtx  0< u< 1\}$.
For each $\mathbf{u}=(u,1-u)\in\mathcal{U}$, let $\hat x_n(\mathbf
{u})=(\lfloor{nu}\rfloor,
n-\lfloor{nu}\rfloor)$.
Define limiting point-to-point free energies
\[
\Lambda_{p2p}(\mathbf{u}) = \lim_{n\to\infty}
n^{-1}\log Z_{0,
\xhat_n(\mathbf{u})}, \qquad\mathbf{u}\in\mathcal{U},
\]
%
and tilted point-to-line free energies
\[
\Lambda_{p2\ell}(h) = \lim_{N\to\infty} N^{-1}\log
Z^h_{0, (N)}, \qquad h=(h_1,h_2)\in
\mathbb{R} ^2.
\]
%
Under
assumption (\ref{vmom}) these limits exist $\mathbb{P} $-a.s.,
$\Lambda_{p2p}$ is continuous
and concave in $\mathbf{u}$ and $\Lambda_{p2\ell}$ is continuous and
convex in $h$
\cite{rass-sepp-p2p}.

We recall two variational formulas, valid for i.i.d. weights under assumption~(\ref{vmom}). First, a convex duality between the
free energies (\cite{rass-sepp-p2p}, Remark~4.2, also proved below in
(\ref{vI*}))
%
%
\begin{equation}
\Lambda_{p2p}(\mathbf{u})=\inf_{h\in\mathbb{R} ^2}\bigl\{\Lambda
_{p2\ell}(h)-\mathbf{u}\cdot h\bigr\}. \label{var4}
\end{equation}
%
Let ${\mathscr C}_0$ denote the class of \emph{centered cocycles}
$F\dvtx \Omega\times
\{e_1,e_2\}\to\mathbb{R} $
that satisfy $F\in L^1$, $\mathbb{E} F(w,z)=0$ for $z\in\{e_1,e_2\}$, and
a cocycle property
$F(w,e_1)+F(T_{e_1}w,e_2)=F(w,e_2)+F(T_{e_2}w,e_1)$ $\mathbb{P} $-a.s.
Then we have the variational formula (\cite{rass-sepp-yilm}, Theorem~2.3),
%
%
\begin{equation}
\Lambda_{p2\ell}(h)= \inf_{F\in{\mathscr C}_0} \mathbb{P} \mbox{-}
\mathop{\operatorname{ess}\operatorname{sup}} _w \log \sum
_{z\in\{e_1,e_2\}}p(z) e^{g(w)+h\cdot z+ F(w,z)}. \label
{var1}
\end{equation}
%

We solve (\ref{var4}) and
(\ref{var1}) for the log-gamma model. The next corollary turns the
limits of Theorem~\ref{Zthm1}
into Busemann functions, and states the properties needed for the
development that follows.
Recall the function $\theta(\mathbf{u})\in[0,\rho]$ of (\ref{thetax}),
the unique parameter such that $\mathbf{u}$ is the characteristic
direction for
$(\theta(\mathbf{u}), \rho)$.

%
%
\begin{corollary}[(Corollary of Theorem~\ref{Zthm1})]\label{buse-cor}
Assume $\{w_x\}$ are i.i.d.  $\operatorname{Gamma}(\rho)$.
\begin{longlist}[(a)]
\item[(a)] For each velocity $\mathbf{u}\in\operatorname{int}\mathcal{U}$
and for
each $x, v\in\mathbb{Z} _+^2$,
the $\mathbb{P} $-almost sure limit
%
%
\begin{equation}
B^\mathbf{u}(w,x)=\lim_{n\to\infty} (\log Z_{0, \hat x_n(\mathbf{u})+v}
- \log Z_{x, \hat x_n(\mathbf{u})+v} ) \label{buse8}
\end{equation}
exists and is independent of $v$.

\item[(b)] The sequences $\{ B^\mathbf{u}(T_{ie_1}w, e_1)\dvtx  i\in\mathbb
{Z} _+\}$
and $\{ B^\mathbf{u}(T_{je_2}w, e_2)\dvtx  j\in\mathbb{Z} _+\}$ are
i.i.d. with $e^{-B^\mathbf{u}(w, e_1)} \sim \operatorname{Gamma}(\theta
(\mathbf{u}
))$ and
$e^{-B^\mathbf{u}(w, e_2)} \sim \operatorname{Gamma}(\rho-\theta(\mathbf{u}))$.
\end{longlist}
\end{corollary}

We call $B^\mathbf{u}$ a Busemann function, by analogy with the Busemann
functions of last-passage percolation which are limits
of differences $G_{0, \hat x_n(\mathbf{u})+v}
- G_{x, \hat x_n(\mathbf{u})+v}$.
Of course we are merely re-expressing limits (\ref{Z82}) in the form
\[
e^{-B^\mathbf{u}(T_xw, e_1)}= \lim_{n\to\infty} \frac{Z_{x+e_1, \hat x_n(\mathbf{u})+v}}{Z_{x,
\hat
x_n(\mathbf{u})+v}} =
\eta^{\theta(\mathbf{u})}_{x+e_1}.
\]
The admission of the perturbation $v$ in (\ref{buse8}) gives the
cocycle property,
%
%
\begin{equation}
B^\mathbf{u}(w, x)+B^\mathbf{u}(T_xw,
y)=B^\mathbf{u}(w, x+y). \label{vcocycle}
\end{equation}
As a function of $\mathbf{u}\in\operatorname{int}\mathcal{U}$,
define the tilt vector
%
%
\begin{eqnarray}
\label{hxi} h(\mathbf{u}) &=& \bigl( h_1(\mathbf{u}),h_2(
\mathbf{u}) \bigr)=-\sum_{i=1}^2
\mathbb{E} \bigl[B^\mathbf{u}(w, e_i)\bigr]
e_i
\nonumber\\[-8pt]\\[-8pt]
&=& \bigl(\Psi_0\bigl(\theta(\mathbf{u})\bigr),
\Psi_0\bigl(\rho-\theta(\mathbf {u})\bigr) \bigr).\nonumber
\end{eqnarray}
Note that $h(\mathbf{u})$ is not well defined for $\mathbf{u}$ on the axes.
$\theta(\mathbf{u})$ converges to $0$ (to $\rho$) as $\mathbf{u}$
approaches the $y$-axis ($x$-axis).
Then one of the coordinates of $h(\mathbf{u})$ approaches~$-\infty$.
The function
%
%
\begin{equation}
\mathbf{u}=(u,1-u) \mapsto h_1(\mathbf {u})-h_2(
\mathbf{u})= \Psi_0\bigl(\theta(\mathbf{u})\bigr) -
\Psi_0\bigl(\rho-\theta(\mathbf{u})\bigr) \label
{vh-h}
\end{equation}
is a continuous, strictly increasing function from
$u\in(0,1)$ onto $(-\infty,\infty)$. An inverse function to (\ref{hxi}),
$\mathbb{R} ^2\ni h\mapsto\mathbf{u}(h)\in\operatorname
{int}\mathcal{U}$, is given by
%
%
\begin{eqnarray}
\label{tilt4} &\mbox{$\mathbf{u}=\mathbf{u}(h)$ uniquely characterized by the
equation}&
\nonumber\\[-8pt]\\[-8pt]
& h_1-h_2=\Psi_0\bigl(\theta(\mathbf{u})
\bigr)-\Psi_0\bigl(\rho-\theta(\mathbf {u})\bigr).&\nonumber
\end{eqnarray}
Note that $\mathbf{u}(h)$ is constant when $h$ ranges along a 45
degree diagonal.
If $h=0$ there is no tilt, $\mathbf{u}(0)=(1/2,1/2)$, and
$\theta(\mathbf{u}(0))=\rho/2$.

From these ingredients we solve (\ref{var4}).

%
%
\begin{theorem}\label{vp2p-thm}
Let $\mathbf{u}=(u,1-u)\in\operatorname{int}\mathcal{U}$.
Tilt $h(\mathbf{u})$ kills the point-to-line free energy:
$\Lambda_{p2\ell}(h(\mathbf{u}))=0$ $\forall u\in\operatorname
{int}\mathcal{U}$. Furthermore, $h(\mathbf{u}
)$ minimizes
in (\ref{var4}) and so
%
%
\begin{equation}
\Lambda_{p2p}(\mathbf{u})=-\mathbf{u}\cdot h(\mathbf{u}) = -u
\Psi_0\bigl(\theta(\mathbf{u})\bigr) -(1-u) \Psi_0
\bigl(\rho-\theta(\mathbf{u})\bigr). \label{vp2p}
\end{equation}
\end{theorem}

Define the centered cocycle
%
%
\begin{equation}
F^\mathbf{u}(w, z)= -B^\mathbf{u}(w, z) - h(\mathbf{u} )\cdot z, \qquad z\in\{e_1,e_2\}. \label{Fxi}
\end{equation}
%

%
%
\begin{theorem}\label{tilt-thm3}
Given $h=(h_1,h_2)\in\mathbb{R} ^2$, the equation
\[
h_1(\mathbf{u})-h_2(\mathbf{u}) = h_1-h_2
\]
%
determines a unique $\mathbf{u}\in\operatorname{int}\mathcal{U}$.
Then $F^\mathbf{u}\in{\mathscr C}_0$ is
a minimizer in (\ref{var1}). The right-hand side of (\ref{var1})
is constant in $w$, so the essential supremum
can be dropped: $\mathbb{P} $-a.s.,
%
%
\begin{eqnarray}
\label{tilt68} \Lambda_{p2\ell}(h) &=& \log\sum
_{z\in\{e_1,e_2\}}p(z) e^{g(w)+h\cdot z+ F^\mathbf{u}(w,z)} = -h_2(
\mathbf{u})+h_2
\nonumber\\[-8pt]\\[-8pt]
&=& -\Psi_0\bigl(\rho-\theta(\mathbf{u})\bigr)+h_2.\nonumber
\end{eqnarray}
\end{theorem}

%
%
\begin{remark}
Theorem~\ref{vp2p-thm} is the third calculation of the explicit value of
$\Lambda_{p2p}(\mathbf{u})$. This result was first derived in \cite
{sepp-12-aop} together
with fluctuation bounds. The simplest proof is in \cite{geor-sepp} where
the minimization of the limit of the right-hand side of (\ref{ElogZ})
is done with convex analysis. The value (\ref{tilt68}) of the tilted
point-to-line free energy has not been computed before.
\end{remark}

%
%
\begin{remark}[(Large deviations)]\label{vldp-rmk}
Let us observe how the duality between tilt~$h$ and velocity
$\mathbf{u}$ in (\ref{var4}) 
is a standard large deviation duality. The tilted quenched path measure
is
%
%
\begin{equation}
Q^h_{0,(N)}\{x_\centerdot\} = \frac{1}{Z^h_{0,(N)}}
e^{\sum_{k=0}^{N-1} g(T_{x_k}\omega) + h\cdot X_N} P\{x_\centerdot\}. \label{vQ}
\end{equation}
The quenched
large deviation rate function for
the velocity is ($\mathbb{P} $-a.s.)
\begin{eqnarray*}
I_h(\mathbf{v})&=&- \lim_{\delta\searrow0} \mathop{\overline{
\mathop{\operatorname{\underline{\lim }}}}}\limits
_{N\to\infty} N^{-1}\log
Q^h_{0,(N)}\bigl\{ \bigl\llvert N^{-1}X_N-
\mathbf{v}\bigr\rrvert \le\delta\bigr\}
\\
&=&
\Lambda_{p2\ell}(h) - h\cdot\mathbf{v}-
\Lambda_{p2p}(\mathbf{v}). 
\end{eqnarray*}
%
The last equality uses the continuity of $\Lambda_{p2p}$ and Lemma 2.9
in \cite
{rass-sepp-p2p}.
The limiting logarithmic moment generating function is
\begin{eqnarray*}
\Lambda_{Q,h}(a) &=&\lim_{N\to\infty} N^{-1}\log
E^{Q^h_{0,(N)}} \bigl[ e^{a\cdot X_N} \bigr] = \Lambda_{p2\ell}(h+a)-
\Lambda_{p2\ell}(h) \qquad\mbox{$\mathbb {P} $-a.s.}
\end{eqnarray*}
By Varadhan's theorem these are convex duals of each other:
%
%
\begin{equation}
I_h(\mathbf{v}) = \sup_{a\in\mathbb{R} ^2} \bigl\{ a\cdot
\mathbf{v}- \Lambda_{Q,h}(a) \bigr\} \label{vI*}
\end{equation}
which is the same as (\ref{var4}). For the next section we need
the minimizer of $I_h$. By~(\ref{thetax}), (\ref{vp2p}) and
calculus, $I_h$ is uniquely minimized by $\mathbf{u}(h)$ defined by
(\ref{tilt4}). Consequently the walk converges exponentially
fast: for $\delta>0$,
%
%
\begin{equation}
\mathop{\operatorname{\overline{\lim}}}_{N\to\infty} N^{-1}\log
Q^h_{0,(N)}\bigl\{ \bigl\llvert N^{-1}X_N-
\mathbf{u}(h)\bigr\rrvert \ge\delta\bigr\} <0 \qquad\mbox {$\mathbb{P} $-a.s.}
\label{vldp3}
\end{equation}
Function $\Lambda_{p2p}$ extends naturally to all of $\mathbb{R}
_+^2$ by homogeneity:
$\Lambda_{p2p}(c\mathbf{u})=c\Lambda_{p2p}(\mathbf{u})$. Part of
the duality setting is that
the mean of the Busemann function gives the gradient
$
\nabla_\mathbf{u} \Lambda_{p2p}(\mathbf{u})=-h(\mathbf{u})$. %
\end{remark}

The remainder of this section proves the theorems.

\begin{pf*}{Proof of Theorem~\ref{vp2p-thm}}
That $F^\mathbf{u}$ is a centered cocycle is clear by (\ref{vcocycle}).
Let
\[
f^\mathbf{u}(w, x)=\sum_{i=0}^{m-1}
F^\mathbf{u}(T_{x_i}w, x_{i+1}-x_i) =
-B^\mathbf{u}(w, x) - h(\mathbf{u})\cdot x
\]
be the path integral of $F$. The admissible path $\{x_i\}_{i=0}^m $ above
satisfies $x_0=0$ and $x_m=x$, and the cocycle property implies that
$f^\mathbf{u}$
depends on the path only through the endpoint $x$.
Corollary~\ref{buse-cor}(b) verifies exactly the sufficient condition
(\ref{Lmom}) for~(\ref{assL}), for the function $F^\mathbf{u}$ itself.
From Theorem~\ref{erg-thm1}
in the \hyperref[secapp]{Appendix},
\[
\max_{x\in\mathbb{Z} _+^d\dvtx  \llvert  x\rrvert _1=n} \frac{\llvert
f^\mathbf{u}(w, x)\rrvert }n \to0 \qquad\mbox{a.s.}
\]
%
This ergodic property slips $ f^\mathbf{u}(w, X_n)$ into the exponent
in the free energy limit, and shows that tilt $h(\mathbf{u})$ kills the
point-to-line free energy,
%
%
\begin{eqnarray}
\label{tilt5} \Lambda_{p2\ell}\bigl(h(\mathbf{u})\bigr) &=& \lim
_{n\to\infty} n^{-1}\log E\bigl[ e^{\sum_{k=0}^{n-1} g(T_{X_k}w
) + h(\mathbf{u})\cdot X_n}\bigr]\nonumber
\\
&=&\lim_{n\to\infty} n^{-1}\log E\bigl[ e^{\sum_{k=0}^{n-1} g(T_{X_k}w)
+ h(\mathbf{u})\cdot X_n + f^\mathbf{u}(w, X_n)}
\bigr]
\nonumber\\[-8pt]\\[-8pt]
&=&\lim_{n\to\infty} n^{-1}\log E\bigl[ e^{\sum_{k=0}^{n-1} (
g(T_{X_k}w)
+ h(\mathbf{u})\cdot(X_{k+1}-X_k) + F^\mathbf{u}(T_{X_k}w, X_{k+1}-X_k))
}\bigr]\nonumber
\\
&=&0.\nonumber
\end{eqnarray}
The third equality uses the definition of $f^\mathbf{u}$ as the path
integral of $F^\mathbf{u}$.
The last equality comes from
%
%
\begin{eqnarray}
\label{tilt3}
&& \sum_{z\in\{e_1,e_2\}} p(z) e^{g(w)+ h(\mathbf{u})\cdot z+
F^\mathbf{u}(w, z) }\nonumber
\\
&&\qquad =
\sum_{z\in\{e_1,e_2\}} p(z) e^{g(w)- B^\mathbf{u}(w, z) }
\\
&&\qquad =\lim_{n\to\infty} \frac{ \sum_{z\in\{e_1,e_2\}} p(z) e^{g(w
)} Z_{z, \hat x_n(\mathbf{u})} }{Z_{0, \hat x_n(\mathbf{u})} } =\lim
_{n\to\infty} \frac{ Z_{0, \hat x_n(\mathbf{u})} }{Z_{0, \hat
x_n(\mathbf{u})} } =1.\nonumber
\end{eqnarray}

Fix $\mathbf{u}\in\mathcal{U}$.
Since $\llvert  X_n\rrvert _1=n$,
the expression on the right-hand
side of (\ref{var4}) satisfies
\[
\Lambda_{p2\ell}(h) -\mathbf{u}\cdot h = \Lambda_{p2\ell
}(h_1-h_2,0)
-\mathbf{u}\cdot(h_1-h_2,0)
\]
and so, as a function of $h$, is constant along
45 degree diagonals. So the minimization needs one $h$ point from each
diagonal, which is what
parameterization $h(\mathbf{v})$ of~(\ref{hxi})
achieves by virtue of the bijection (\ref{vh-h}).
The upshot is that
\begin{eqnarray*}
\Lambda_{p2p}(\mathbf{u}) &=&\inf_{\mathbf{v}\in\operatorname{int}\mathcal{U}} \bigl\{
\Lambda _{p2\ell}\bigl(h(\mathbf{v})\bigr)-h(\mathbf{v})\cdot \mathbf{u}
\bigr\}
\\
&=&\inf_{\mathbf{v}\in\operatorname{int}\mathcal{U}} \bigl\{ -h(\mathbf {v})\cdot\mathbf{u}\bigr\} =
-h(\mathbf{u} )\cdot\mathbf{u}.
\end{eqnarray*}
%
The last step is calculus: from explicit formula (\ref{hxi}),
$h(\mathbf{v})\cdot\mathbf{u}$ is uniquely maximized at $\mathbf
{v}=\mathbf{u}$.
This completes the proof of Theorem~\ref{vp2p-thm}.
\end{pf*}

\begin{pf*}{Proof of Theorem~\ref{tilt-thm3}}
Since $\llvert  X_n\rrvert _1=n$ and by (\ref{tilt5}),
\begin{eqnarray*}
\Lambda_{p2\ell}(h) &=&\lim_{n\to\infty} n^{-1}\log
E\bigl[ e^{\sum_{k=0}^{n-1} g(T_{X_k}w
) + h\cdot X_n}\bigr]
\\
&=&\lim_{n\to\infty} n^{-1}\log E\bigl[ e^{\sum_{k=0}^{n-1} g(T_{X_k}w
) + h(\mathbf{u})\cdot X_n}
\bigr] -h_2(\mathbf{u})+h_2 =-h_2(
\mathbf{u})+h_2.
\end{eqnarray*}
%

On the other hand, 
by (\ref{tilt3}),
\begin{eqnarray*}
\log\sum_z p(z) e^{g(w)+ h\cdot z+ F^\mathbf{u}(w, z) } &=& \log\sum
_z p(z) e^{g(w)+ h(\mathbf{u})\cdot z+ F^\mathbf{u}(w, z)
} -h_2(
\mathbf{u})+h_2
\\
&=& -h_2(\mathbf{u})+h_2.
\end{eqnarray*}\upqed
\end{pf*}

\section{Limits of ratios of point-to-line partition functions}\label{secp2l}
Armed with the limits of Theorem~\ref{Zthm1} and the large deviation bound
of Remark~\ref{vldp-rmk},
we prove convergence of ratios of tilted point-to-line partition
functions. With
the tilt parameter $h=(h_1,h_2)\in\mathbb{R} ^2$ and $Z_{u,v}$
defined as
in (\ref{Z8}), let
\[
Z^{h}_{x,(N)}=\sum_{v\in x+\mathbb{Z} _+^2\dvtx  \llvert  v\rrvert _1=N}
e^{h\cdot(v-x)} Z_{x,v} \qquad\mbox{for $N\in\mathbb{N} $ and $\llvert
x\rrvert _1\le N$. }
\]
This is the same as (\ref{vZ2}) with a general initial point $x$.
Recall definition (\ref{tilt4}) that associates a velocity $\mathbf{u}
(h)=(u(h), 1-u(h))$ to a tilt $h$, and definition (\ref{thetax}) that
associates a parameter $\theta(\mathbf{v})$
to a velocity $\mathbf{v}$.

%
%
\begin{theorem}\label{tilt-thm}
Fix $0 < \rho<\infty$, and let i.i.d. $\operatorname{Gamma}(\rho)$ weights
$\{w_x\}_{x\in\mathbb{Z} _+^2}$ be given. For $\lambda\in(0,\rho)$,
let $(\xi^\lambda, \eta^\lambda, \zeta^\lambda, w)$ be the
gamma system
constructed in Theorem~\ref{Zthm1}.
Then for $h=(h_1,h_2)\in\mathbb{R} ^2$, $x\in\mathbb{N} \times
\mathbb{Z} _+$, $y\in\mathbb{Z}
_+\times\mathbb{N} $, $\mathbb{P} $-a.s.,
%
%
\begin{eqnarray}
\label{tilt51} \lim_{N\to\infty} \frac{Z^{h}_{x,(N)}}{ e^{-h_1}Z^{h}_{x-e_1,(N)}} &=&
\eta^{\theta(\mathbf{u}(h))}_x
\quad\mbox{and}
\nonumber\\[-8pt]\\[-8pt]
\lim_{N\to\infty} \frac{Z^{h}_{x,(N)}}{e^{-h_2}Z^{h}_{x-e_2,(N)}} &=&
\zeta^{\theta(\mathbf{u}(h))}_x.\nonumber
\end{eqnarray}
\end{theorem}

In other words, the limit of ratios of point-to-line partition functions
tilted by $h$ is equal to the limit of ratios of point-to-point
partition functions
in the direction~$\mathbf{u}(h)$
\begin{eqnarray*}
\lim_{N\to\infty} \frac{Z^{h}_{x,(N)}}{e^{-h_1}Z^{h}_{x-e_1,(N)}} &=&\lim_{(m,n)\to\infty}
\frac{Z_{x,(m,n)}}{Z_{x-e_1,(m,n)}}
\end{eqnarray*}
and
\begin{eqnarray*}
\lim_{N\to\infty} \frac{Z^{h}_{x,(N)}}{e^{-h_2}Z^{h}_{x-e_2,(N)}} &=&\lim
_{(m,n)\to\infty}\frac{Z_{x,(m,n)}}{Z_{x-e_2,(m,n)}},
\end{eqnarray*}
%
provided $ m/n\to u(h)/(1-u(h)) $. We see the duality between tilt and velocity
from Remark~\ref{vldp-rmk}
again. We do not presently have a proof of $L^p$ convergence as we did
for the point-to-point case in (\ref{Z821}).

In Section~\ref{Q-sec} the limits of ratios from Theorems~\ref{Zthm1}
and~\ref{tilt-thm}
give convergence
of polymer measures to random walk in a correlated random environment.
The remainder of this section proves Theorem~\ref{tilt-thm}.

\begin{pf*}{Proof of Theorem~\ref{tilt-thm}}
We prove (\ref{tilt51}) for the horizontal ratios (first limit).
Begin with a lower bound, and let $\delta_0>0$.
\begin{eqnarray*}
&& \frac{Z^{h}_{x,(N)}}{e^{-h_1}Z^{h}_{x-e_1,(N)}}
\\
&&\qquad =\sum_{v\dvtx  \llvert  v\rrvert _1=N} \frac{e^{h\cdot
(v-x)}Z_{x-e_1,v}}{e^{-h_1}Z^{h}_{x-e_1,(N)}}
\cdot\frac{Z_{x,v}}{Z_{x-e_1,v}}
\\
&&\qquad  =\sum_{v\dvtx  \llvert  v\rrvert _1=N} Q^h_{x-e_1,(N)} \{
X_{N-\llvert
x\rrvert _1+1}=v\} \frac{Z_{x,v}}{Z_{x-e_1,v}}
\\
&&\qquad \ge\sum_{m\dvtx  \llvert  m-N u(h)\rrvert < N\delta_0} Q^h_{x-e_1,(N)}
\bigl\{ X_{N-\llvert  x\rrvert _1+1}=(m,N-m)\bigr\} \frac{Z_{x,(m,N-m)}}{Z_{x-e_1,(m,N-m)}}.
\end{eqnarray*}
Above we introduced a
tilted quenched point-to-line polymer measure
%
%
\begin{equation}
Q^h_{y,(N)}\{x_\centerdot\} = \frac{1}{Z^{h}_{y,(N)}}
e^{h\cdot(x_{N-\llvert  y\rrvert _1}-y)} \prod_{i=0}^{N-\llvert  y\rrvert _1-1}
w^{-1}_{x_i} 
\label{tQ}
\end{equation}
for paths $x_\centerdot$ from $x_0=y$ to the line $\llvert  x_{N-\llvert  y\rrvert _1}\rrvert _1=N$.

Apply construction (\ref{Z95}) to the gamma system
$(\xi^\lambda, \eta^\lambda, \zeta^\lambda, w)$ to define partition
functions $Z^\lambda$ and associated polymer measures $Q^\lambda$
with northern boundary weights $\{\eta^\lambda_{i, N-m+1}\}_{1\le
i\le m+1}$
and eastern boundary weights $\{\zeta^\lambda_{m+1, j}\}_{1\le j\le N-m+1}$.
Recall the dual exit points (\ref{dualxix})--(\ref{dualxiy}).
By an application of Lemma~\ref{Zcomp-lm1} (to the
reversed rectangle),
\begin{eqnarray*}
\frac{Z_{x,(m,N-m)}}{Z_{x-e_1,(m,N-m)}} &\ge&\frac{Z^\lambda_{x,(m+1,N-m+1)}({t^*_{e_1}}>0)}{
Z^\lambda_{x-e_1,(m+1,N-m+1)}({t^*_{e_1}}>0)}
\\
&\ge& Q^\lambda_{x,(m+1,N-m+1)}\bigl\{{t^*_{e_1}}>0\bigr\}
\frac{Z^\lambda
_{x,(m+1,N-m+1)}}{Z^\lambda_{x-e_1,(m+1,N-m+1)}}
\\
&=& Q^\lambda_{x,(m+1,N-m+1)}\bigl\{{t^*_{e_1}}>0\bigr\}
\eta^\lambda_x.
\end{eqnarray*}
The last equality came from Lemma~\ref{lm-Z9}.
Note the notational distinction: $Q^h_{y,(N)}$ is the tilted point-to-line
polymer measure, while $Q^\lambda_{x,y}$ is the point-to-point
polymer measure with boundary parameter $\lambda$.

We have the lower bound
%
%
\begin{eqnarray}
\label{Zas66} \frac{Z^{h}_{x,(N)}}{e^{-h_1}Z^{h}_{x-e_1,(N)}}&\ge&\sum_{m\dvtx  \llvert
m-N u(h)\rrvert < N\delta_0}
Q^h_{x-e_1,(N)} \bigl\{ X_{N-\llvert  x\rrvert _1+1}=(m,N-m)\bigr\}\hspace*{-30pt}
\nonumber\\[-8pt]\\[-8pt]
&&\hspace*{73pt}{}\times Q^\lambda_{x,(m+1,N-m+1)}
\bigl\{{t^*_{e_1}}>0\bigr\} \eta^\lambda_x.\nonumber
\end{eqnarray}

Let $0<\lambda<\theta(\mathbf{u}(h))$.
Define parameter $M\nearrow\infty$ by $ N(1-u(h))=\break M\Psi_1(\theta
(\mathbf{u}(h)))$.
Let
$ (\bar m, \bar n) = x+  (\lfloor{M \Psi_1(\rho-\lambda
)}\rfloor,
\lfloor{M \Psi_1(\lambda)}\rfloor ) $,
a velocity essentially characteristic for
$(\lambda,\rho)$.
As $m$ varies in the sum on the right-hand side of~(\ref{Zas66}), let
$(m_1,n_1)=(m+1,N-m+1)$. Since $\Psi_1$ is strictly
decreasing, if we fix $\delta_0>0$
small enough, there exists $\varepsilon_0>0$ such that, for large
enough $N$,
\begin{eqnarray*}
\bar n-n_1&\ge& M\Psi_1(\lambda) - M\Psi_1
\bigl(\theta\bigl(\mathbf{u}(h)\bigr)\bigr) - N\delta_0-2 \ge M
\varepsilon_0
\end{eqnarray*}
and
\begin{eqnarray*}
m_1-\bar m&\ge& N u(h)-N\delta_0+1 -x- M
\Psi_1 (\rho-\lambda) \ge M\varepsilon_0.
\end{eqnarray*}
On the second line above we also use definition (\ref{thetax}) of $u(h)$.

Following the idea of Lemma~\ref{pQlm} and (\ref{p8}),
\begin{eqnarray*}
&&\mathbb{P} \bigl[ Q^\lambda_{x,(m+1,N-m+1)}\bigl\{{t^*_{e_2}}>0
\bigr\} > e^{-\delta
_1\varepsilon_0 M} \bigr]
\\
&&\qquad \le \mathbb{P} \bigl[ Q^{\lambda}_{x,(\bar m, \bar n)} \bigl
\{{t^*_{e_2}}> M\varepsilon _0 \bigr\} > e^{-\delta_1\varepsilon_0 M}
\bigr] \le e^{-c_1\varepsilon_0 M}.
\end{eqnarray*}
Since there are $O(N)$ $m$-values, Borel--Cantelli and
(\ref{Zas66}) give, for large
enough $n$,
\begin{eqnarray*}
&& \frac{Z^{h}_{x,(N)}}{e^{-h_1}Z^{h}_{x-e_1,(N)}} \ge\eta^\lambda_x \bigl(1-
e^{-\delta_1\varepsilon_0 M} \bigr) Q^h_{x-e_1,(N)} \bigl\{ \bigl\llvert
X_{N-\llvert  x\rrvert _1+1}-N\mathbf {u}(h)\bigr\rrvert < N\delta_0\bigr\}.
\end{eqnarray*}
%
By the quenched LDP (\ref{vldp3}) for the point-to-line measure, the
last probability
tends to $1$. Thus we obtain the lower bound
\[
\mathop{\operatorname{\underline{\lim}}}_{N\to\infty}
\frac
{Z_{x,(N)}}{e^{-h_1}Z_{x-e_1,(N)}} \ge\eta^\lambda_x\nearrow
\eta^{\theta(\mathbf{u}(h))}_x \qquad \mbox {as we let $\lambda\nearrow\theta
\bigl(\mathbf{u}(h)\bigr)$}. 
\]

For the upper bound we first bound summands away from the concentration point
of the quenched measure:
\begin{eqnarray*}
&& \sum_{m\dvtx  \llvert  m-N u(h)\rrvert \ge N\delta_0} \frac
{e^{h\cdot
((m,N-m)-x)}Z_{x,(m,N-m)}}{e^{-h_1}Z^{h}_{x-e_1,(N)}}
\\
&&\qquad \le w_{x-e_1} \sum_{m\dvtx  \llvert  m-N u(h)\rrvert \ge N\delta_0}
\frac
{e^{h\cdot
((m,N-m)-x)}Z_{x,(m,N-m)}}{Z^{h}_{x,(N)}}
\\
&&\qquad \le w_{x-e_1} Q^h_{x,(N)} \bigl\{ \bigl\llvert
X_{N-\llvert  x\rrvert _1}-N\mathbf{u} (h)\bigr\rrvert \ge N\delta_0 \bigr\}
\longrightarrow0.
\end{eqnarray*}
For the remaining fractions we develop an upper bound:
\begin{eqnarray*}
\frac{Z_{x,(m,N-m)}}{Z_{x-e_1,(m,N-m)}} &\le&\frac{Z^\lambda_{x,(m+1,N-m+1)}({t^*_{e_2}}>0)}{
Z^\lambda_{x-e_1,(m+1,N-m+1)}({t^*_{e_2}}>0)}
\\
&\le&\frac{1}{Q^\lambda_{x-e_1,(m+1,N-m+1)}\{{t^*_{e_2}}>0\}}\cdot \frac
{Z^\lambda_{x,(m+1,N-m+1)}}{Z^\lambda_{x-e_1,(m+1,N-m+1)}}
\\
&=& \frac{\eta^\lambda_x}{Q^\lambda_{x-e_1,(m+1,N-m+1)}\{
{t^*_{e_2}}>0\}
}.
\end{eqnarray*}

Combining these,
%
\begin{eqnarray*}
\frac{Z^{h}_{x,(N)}}{e^{-h_1}Z^{h}_{x-e_1,(N)}} &\le&\sum_{m\dvtx  \llvert  m-N u(h)\rrvert < N\delta_0}
Q^h_{x-e_1,(N)} \bigl\{ X_{N-\llvert  x\rrvert _1+1}=(m,N-m)\bigr\}
\\
&&\hspace*{73pt}{} \times\frac{\eta^\lambda_x}{
1 - Q^\lambda_{x-e_1,(m+1,N-m+1)}\{{t^*_{e_1}}>0\}} + o(1),
\end{eqnarray*}
%
where the $o(1)$ term tends to zero $\mathbb{P} $-a.s.
Proceed as for the lower bound, this time choosing
$ \theta(\mathbf{u}(h))<\lambda<\rho$ to show that the $Q^\lambda
$-probability
above vanishes exponentially fast. This completes the proof of Theorem
\ref{tilt-thm}.
\end{pf*}

\section{Limits of path measures}\label{Q-sec}

As in Section~\ref{secratio},
fix $\rho\in(0,\infty)$ and assume that i.i.d. $\operatorname{Gamma}(\rho)$
weights $w=\{w_x\dvtx  x\in\mathbb{Z} _+^2\}$ are given on a probability
space $(\Omega, \mathfrak{S}, \mathbb{P} )$.
Let $Z_{u,v}$ be the point-to-point
partition function defined in (\ref{Z8}), with associated
quenched polymer measure
\[
Q_{u,v} \{x_{ \centerdot}\} = \frac{1}{Z_{u,v}} \prod
_{i=0}^{\llvert
v-u\rrvert _1-1} w_{x_i}^{-1},
\qquad x_{ \centerdot}\in\Pi_{u,v}.
\]
%
Let point-to-line polymer measures be defined as before in (\ref{vQ})
or (\ref{tQ}).

For $\lambda\in(0,\rho)$, let $(\xi^\lambda, \eta^\lambda, \zeta
^\lambda, w)$ denote the gamma
system of weights constructed in Theorem~\ref{Zthm1}.
In this environment, define RWRE transitions on $\mathbb{Z} _+^2$ by
%
%
\begin{eqnarray}\label{Qrwre1}
\pi^{w, \lambda}(x, x+e_1)&=&\frac{\eta^\lambda
_{x+e_1}}{\eta^\lambda_{x+e_1}+\zeta^\lambda_{x+e_2}}\quad\mbox{and}
\nonumber\\[-8pt]\\[-8pt]
\pi^{w, \lambda}(x, x+e_2)&=&\frac{\zeta^\lambda_{x+e_2}}{\eta
^\lambda_{x+e_1}+\zeta^\lambda_{x+e_2}}.\nonumber
\end{eqnarray}
Let $P^{w,\lambda}$ be the quenched path measure of the RWRE
started at $0$.
It is characterized by the initial point and transition
\[
P^{w,\lambda}(X_0=0)=1,\qquad P^{w,\lambda}(X_{k+1}=y
\vert X_k=x)=\pi^{w, \lambda}(x, y).
\]
%
We wrote $P^{w, \lambda}$ instead of $P^{\omega, \lambda}$ because the
quenched distribution is a function of the weights $w$,
through the limits (\ref{Z82}) that appear on the right in (\ref{Qrwre1}).
In other words, the probability space has not been artificially augmented
with the variables that appear in definition (\ref{hspace}):
everything comes from the single i.i.d. collection~$w$.

Let\vspace*{1pt} $Z^\lambda_{u,v}$ denote the partition function defined by
(\ref{gZ}) in gamma system
$(\xi^\lambda, \eta^\lambda, \zeta^\lambda, w)$.
Adapt the notation from (\ref{tau}) in the
form
\[
\tau^\lambda_{x, x+z}=\cases{ \eta^\lambda_{x+e_1},
&\quad$z=e_1$,
\vspace*{3pt}\cr
\zeta^\lambda_{x+e_2}, &
\quad$z=e_2$.}
\]
%
Then we can rewrite transition (\ref{Qrwre1})
as
\begin{eqnarray*}
\pi^{w, \lambda}(x, x+z)&=&\frac{\tau^\lambda_{x,x+z}}{
\tau^\lambda_{x,x+e_1}+\tau^\lambda_{x,x+e_2}}
\\
&=& \frac{(Z^{\lambda}_{0,x+z})^{-1}}{
(Z^{\lambda}_{0,x+e_1})^{-1}+(Z^{\lambda}_{0,x+e_2})^{-1}}, \qquad z\in\{e_1,e_2\}.
\end{eqnarray*}
%
In other words, this RWRE is of the competition interface type
defined by (\ref{hrwre}) in Lemma~\ref{hlm3}. The next theorem
shows that these walks are the limits of the polymer measures on
long paths, both point-to-point and point-to-line.

%
%
\begin{theorem} The following weak limits of probability measures
on the path space $(\mathbb{Z} _+^2)^{\mathbb{Z} _+}$ happen for
$\mathbb{P} $-a.e. $w$.
\begin{longlist}[(ii)]
\item[(i)] Let $0<\lambda<\rho$, and suppose $(m,n)\to\infty$ in the
characteristic direction of
parameters $(\lambda,\rho)$ as defined in (\ref{char5}). Then $Q_{0,(m,n)}$
converges to $P^{w,\lambda}$.

\item[(ii)] Let $h\in\mathbb{R} ^2$. Then as $N\to\infty$ the tilted
point-to-line measure
$Q^h_{0,(N)}$ converges to $P^{w,\theta(\mathbf{u}(h))}$.
\end{longlist}
\end{theorem}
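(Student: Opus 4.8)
The plan is to reduce the weak convergence to convergence of finite-dimensional distributions and then read it off directly from the ratio limits already established. All the measures concerned ($Q_{0,(m,n)}$, $Q^h_{0,(N)}$, and $P^{\wgt,\lambda}$) are carried by admissible up-right paths out of $0$, and encoding such a path by its sequence of increments identifies this path space with the compact space $\{e_1,e_2\}^{\N}$. On a compact space, weak convergence of probability measures is equivalent to convergence of the expectations of functions depending on finitely many coordinates, hence to convergence of the cylinder probabilities $\mu\{X_0=x_0,\dots,X_k=x_k\}$ for each fixed admissible finite path $(x_0=0,x_1,\dots,x_k)$. Since there are only countably many such cylinders, a single $\P$-full set of environments $\wgt$ handles all of them at once. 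So it suffices to show, for each cylinder, that the $Q$-probability converges to $\prod_{i=0}^{k-1}\pi^{\wgt,\lambda}(x_i,x_{i+1})$, which by \eqref{Q:rwre} equals $P^{\wgt,\lambda}\{X_0=x_0,\dots,X_k=x_k\}$, with $\lambda$ as in the statement.

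For part (i), expanding the definition \eqref{Zchk9.2} of the point-to-point polymer measure and telescoping gives, once $m+n\ge k$,
\[
Q_{0,(m,n)}\{X_0=x_0,\dots,X_k=x_k\}=\prod_{i=0}^{k-1}\wgt_{x_i}^{-1}\,\frac{Z_{x_{i+1},(m,n)}}{Z_{x_i,(m,n)}}.
\]
Applying the recursion $Z_{x,(m,n)}=\wgt_x^{-1}\bigl(Z_{x+e_1,(m,n)}+Z_{x+e_2,(m,n)}\bigr)$ and dividing numerator and denominator of the $i$-th factor by $Z_{x_i,(m,n)}$ turns it into $\eta_{x_i+e_1,(m,n)}/(\eta_{x_i+e_1,(m,n)}+\zeta_{x_i+e_2,(m,n)})$ or $\zeta_{x_i+e_2,(m,n)}/(\eta_{x_i+e_1,(m,n)}+\zeta_{x_i+e_2,(m,n)})$, according to whether the $i$-th step is $e_1$ or $e_2$, where $\eta_{\cdot,(m,n)},\zeta_{\cdot,(m,n)}$ are the ratio variables \eqref{p:9}. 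As $(m,n)\to\infty$ in the characteristic direction of $(\lambda,\rho)$, Theorem \ref{Zthm1}(ii) gives $\eta_{x,(m,n)}\to\eta^\lambda_x$ and $\zeta_{y,(m,n)}\to\zeta^\lambda_y$ almost surely for the finitely many sites $x,y$ that occur, so the $i$-th factor converges to $\pi^{\wgt,\lambda}(x_i,x_{i+1})$ by \eqref{Q:rwre1}, and the finite product converges to the desired limit.

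Part (ii) follows the same scheme with the tilted partition functions. From \eqref{t:Q}, telescoping yields
\[
Q^h_{0,(N)}\{X_0=x_0,\dots,X_k=x_k\}=\prod_{i=0}^{k-1}\wgt_{x_i}^{-1}e^{h\cdot(x_{i+1}-x_i)}\,\frac{Z^{h}_{x_{i+1},(N)}}{Z^{h}_{x_i,(N)}},
\]
and the one-step recursion $Z^h_{x,(N)}=\wgt_x^{-1}\bigl(e^{h_1}Z^h_{x+e_1,(N)}+e^{h_2}Z^h_{x+e_2,(N)}\bigr)$, together with division by $Z^h_{x_i,(N)}$, rewrites the $i$-th factor in terms of the ratios $e^{h_1}Z^h_{x_i+e_1,(N)}/Z^h_{x_i,(N)}$ and $e^{h_2}Z^h_{x_i+e_2,(N)}/Z^h_{x_i,(N)}$. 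By Theorem \ref{tilt-thm} these converge $\P$-a.s.\ to $\eta^{\theta(\uvec(h))}_{x_i+e_1}$ and $\zeta^{\theta(\uvec(h))}_{x_i+e_2}$, so the $i$-th factor converges to $\pi^{\wgt,\theta(\uvec(h))}(x_i,x_{i+1})$, and the product gives the claim.

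I expect no real obstacle here: the statement is essentially an assembly of Theorems \ref{Zthm1} and \ref{tilt-thm}, and the only points requiring care are the reduction to cylinder events (resting on compactness of the increment space) and the fact that a single $\P$-null exceptional set serves the countable family of cylinders. The genuine analytic difficulty lies upstream — in Theorem \ref{tilt-thm} for part (ii) and in Theorem \ref{Zthm1}(ii) for part (i); once those ratio limits are available, Theorem \ref{Qlim-thm} requires no further large-deviation input.
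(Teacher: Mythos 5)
Your proposal is correct and follows essentially the same route as the paper: telescope the cylinder probability into one-step ratios of partition functions, apply the almost sure ratio limits of Theorems \ref{Zthm1} and \ref{tilt-thm} to identify each factor with the RWRE transition \eqref{Q:rwre1}, and note that the countably many cylinders determine weak convergence. The only cosmetic difference is that you rewrite each factor in terms of the pre-limit ratio variables \eqref{p:9} before passing to the limit, while the paper passes to the limit first and then invokes the gamma-system identity $\wgt_x=\eta^\lambda_{x+e_1}+\zeta^\lambda_{x+e_2}$.
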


\begin{pf} Fix a finite path $x_{0,M}$ with $x_0=0$. Then $(m,n)\ge
x_M$ for
large enough $(m,n)$, and
%
%
\begin{eqnarray}
\label{QP} Q_{0,(m,n)} \{X_{0,M}=x_{0,M}\} &=&
\frac{Z_{x_M,(m,n)}}{Z_{0,(m,n)}} \prod_{i=0}^{M-1}
w_{x_i}^{-1} \mathop{\longrightarrow}\limits
_{(m,n)\to\infty} \prod
_{i=0}^{M-1} \frac{\tau^{\lambda}_{x_i,x_{i+1}}}{w_{x_i}}
\nonumber\\[-8pt]\\[-8pt]
&=& 
\prod_{i=0}^{M-1}
\pi^{w, \lambda}(x_i, x_{i+1}) = P^{w,\lambda}
\{X_{0,M}=x_{0,M}\}.\nonumber
\end{eqnarray}
We applied limits (\ref{Z82}) and used property $w_x=\eta
^\lambda_{x+e_1}+\zeta^\lambda_{x+e_2}$ of the
gamma system $(\xi^\lambda, \eta^\lambda, \zeta^\lambda, w)$
from Theorem
\ref{Zthm1}.
There are countably many finite paths and these determine weak convergence
on the path space. Hence $\mathbb{P} $-a.s. limits (\ref{QP}) give
claim (i).

The proof of (ii) is the same with limits (\ref{tilt51}) instead.
\end{pf}

The RWRE $P^{w, \lambda}$ has the fluctuation exponent of the $1+1$
dimensional
KPZ (Kardar--Parisi--Zhang) universality class: under the averaged
distribution, at time~$n$, the typical
fluctuation away from the characteristic velocity of $(\lambda,\rho)$
is of size~$n^{2/3}$. The reason is that the RWRE is close to a polymer,
and we can apply fluctuation results for the shift-invariant log-gamma polymer.
Below $\mathbb{E} $ denotes expectation over the weights $w$.
Recall the characteristic velocity $\mathbf{u}_{\lambda,\rho}$ from
(\ref{gchar}).

%
%
\begin{theorem}\label{Qrwrethm}
There exist constants $C_1, C_2<\infty$ such that
for $N\in\mathbb{N} $ and $b\ge C_1$,
%
%
\begin{equation}
\mathbb{E} P^{w,\lambda}\bigl\{ \llvert X_N- N\mathbf
{u}_{\lambda,\rho}\rrvert \ge bN^{2/3} \bigr\} \le C_2b^{-3}.
\label{Qrwre7}
\end{equation}
Given $\varepsilon>0$, there exists $\delta>0$ such that
%
%
\begin{equation}
\mathop{\operatorname{\overline{\lim}}}_{N\to\infty} \mathbb{E}
P^{w,\lambda}\bigl\{ \llvert X_N- N\mathbf{u}_{\lambda,\rho
}
\rrvert \le \delta N^{2/3} \bigr\} \le\varepsilon. \label{Qrwre8}
\end{equation}
\end{theorem}

\begin{pf}
For each $N$ let
$(m,n)= ( \lfloor{cN\Psi_1(\rho-\lambda)}\rfloor, \lfloor
{cN\Psi_1(\lambda)}\rfloor )$ where
$c>0$ is fixed large enough so that $m\wedge n>2N$. Define $0<\kappa
<1$ by
$\kappa^{-1}= c (\Psi_1(\rho-\lambda)+\Psi_1(\lambda))$.
Then up to errors from integer parts $(\kappa m,\kappa n)= N\mathbf{u}
_{\lambda,\rho}$.
(See Figure~\ref{figrwre}.)

%
%
\begin{figure}[t]

\includegraphics{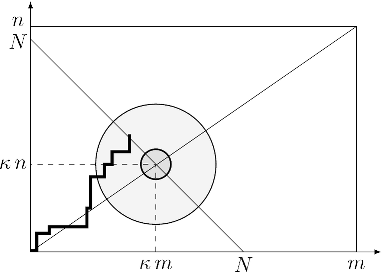}

\caption{Illustration of the proof of Theorem \protect\ref
{Qrwrethm}. The thickset
RWRE path avoids the disk of radius $\delta N^{2/3}$ (dark grey small disk)
but enters the disk of radius $b N^{2/3}$ (light grey large disk) centered
at $(\kappa m,\kappa n)= N\mathbf{u}_{\lambda,\rho}$.}
\label{figrwre}
\end{figure}

Fix $(m,n)$.
We couple the RWRE $P^{w,\lambda}$ with the polymer that obeys the quenched
distribution $Q^{\lambda\mathrm{NE}}_{0,(m,n)}$ defined by applying
construction
(\ref{Z95}) to the gamma system
$(\xi^\lambda, \eta^\lambda, \zeta^\lambda, w)$.
In other words, the boundary weights $\eta^\lambda$ and $\zeta
^\lambda$
are on the north and east, the
bulk weights come from $w$ and the distribution of the weights is
described by (\ref{gcheckxi}), with $w$ taking on the role of
$\check\xi$.
This is the stationary log-gamma polymer to which results from \cite
{sepp-12-aop}
apply.

Define the path $\check X_\centerdot\in\Pi_{0,(m,n)}$
by letting it follow the RWRE until it hits either the north or the
east boundary
of the rectangle $\{0,\ldots,m\}\times\{0,\ldots,n\}$, and then
follow the boundary
to $(m,n)$. The next calculation shows that the quenched distribution
of $\check X_\centerdot$ is $Q^{\lambda\mathrm{NE}}_{0,(m,n)}$. Let\vspace*{1pt}
$x_\centerdot\in\Pi_{0,(m,n)}$. To be concrete, let $0\le k<m$ and
suppose $x_\centerdot$ hits
the north boundary at $x_{k+n}=(k,n)$:
\begin{eqnarray*}
P^{w,\lambda}(\check X_\centerdot=x_\centerdot) &=& \prod
_{j=0}^{k+n-1} \frac{\tau^{\lambda}_{x_j,x_{j+1}}}{w_{x_j}} =\frac{1}{Z^\lambda_{0,(k,n)}}
\prod_{j=0}^{k+n-1} {w^{-1}_{x_j}}
\\
&=&\frac{1}{Z^\lambda_{0,(m,n)}} \prod_{j=0}^{k+n-1}
{w^{-1}_{x_j}} \cdot\prod_{i=k+1}^m
\bigl(\eta^\lambda_{i,n}\bigr)^{-1}
\\
&=&\frac{1}{Z^{\lambda\mathrm{NE}}_{0,(m,n)}} \prod_{j=0}^{k+n-1}
{w^{-1}_{x_j}} \cdot\prod_{i=k+1}^m
\bigl(\eta^\lambda_{i,n}\bigr)^{-1}
=Q^{\lambda\mathrm{NE}}_{0,(m,n)}\{x_\centerdot\}.
\end{eqnarray*}
The\vspace*{2pt} last equality is the definition of $Q^{\lambda\mathrm
{NE}}_{0,(m,n)}\{
x_\centerdot\}$.
The equality $Z^{\lambda\mathrm{NE}}_{0,(m,n)}=Z^\lambda_{0,(m,n)}$
comes by applying Lemma~\ref{lm-Z9} to a telescoping product of ratio weights.

With $c$ large enough, the boundary does not interfere with behavior around
$(\kappa m,\kappa n)=N\mathbf{u}_{\lambda,\rho}$. In (\ref
{Qrwre7})--(\ref{Qrwre8})
we can replace $\mathbb{E} P^{w, \lambda}\{\cdot\}$ with\break $\mathbb
{E} Q^{\lambda
\mathrm{NE}}_{0,(m,n)}\{\cdot\}$. The result follows from Theorem 2.3 of
\cite{sepp-12-aop}, after a
harmless reversal of the lattice rectangle to account for the difference
that in (\cite{sepp-12-aop}, Theorem~2.3), the boundary weights are on
the south
and west.
\end{pf}

\section{The log-gamma polymer random walk in random environment}\label{secrwre}

In the previous section we saw that the limits of log-gamma polymer measures
are polymer RWREs with transition (\ref{hrwre1}), where the
weights come from a gamma system with some
parameters $(\lambda,\rho)$. In this section we identify a
stationary, ergodic
probability distribution for the environment process of a polymer RWRE.
We expect this stationary Markov chain to be the limit of the
environment process
when its initial distribution is an appropriate gamma system (Remark
\ref{esim} below).

The process of the
environment as seen from the particle is
\[
T_{X_n}\omega=(\xi_{X_n+\mathbb{N} ^2}, \eta_{X_n+\mathbb{N}
\times\mathbb{Z} _+},\zeta
_{X_n+\mathbb{Z} _+\times\mathbb{N} }, \check\xi_{X_n+\mathbb{Z} _+^2}).
\]
The state space of this process
is the space
$\Omega_{\mathrm{NE}}$ of weight configurations $\omega=(\xi,\eta,\zeta,\check\xi)$
that satisfy NE induction, as defined in Definition~\ref{hga-def} and
(\ref{hspace}).

Let $0<\alpha,\beta<\infty$
and $\rho=\alpha+\beta+1$. Define probability distribution $\mu
^{\alpha,\beta}$
on the space $\Omega_{\mathrm{NE}}$ 
as follows: let the variables
$(\eta_{\mathbb{N} e_1}, \zeta_{\mathbb{N} e_2}, \xi_{\mathbb{N}
^2})$ be
mutually independent with marginal distributions
%
%
\begin{eqnarray}\label{ed}
\quad &&\eta_{i,0} \sim \operatorname{Gamma}(\alpha),
\zeta_{0,j} \sim \operatorname{Gamma}(\beta),
\xi_{i,j} \sim \operatorname{Gamma}(\rho),\qquad
i,j\in \mathbb{N}.
\end{eqnarray}
The remaining variables
$\{\eta_x, \zeta_x, \check\xi_{x-e_1-e_2}\dvtx  x\in\mathbb{N} ^2\}$
are then
defined by north--east induction (\ref{hNE1})--(\ref{hNE2}).

A few more notational items.
$G_\alpha$ denotes a $\operatorname{Gamma}(\alpha$) random variable and
$\mathbf{E}$ generic expectation. Let $P$ denote the distribution of the
random walk on $\mathbb{Z} _+^2$ that starts at $0$ and has step distribution
\[
p(e_1)=\frac{\alpha}{\alpha+\beta} =1-p(e_2).
\]
Let us call this the
$(\frac{\alpha}{\alpha+\beta},\frac{\beta}{\alpha+\beta})$ random
walk.
An admissible path is denoted by $x_{0,n}=(x_0,x_1,\ldots,x_n)$
with $x_0=0$ and steps $z_k=x_k-x_{k-1}\in\{e_1,e_2\}$.\vspace*{2pt}

The Burke property is not
valid for $\mu^{\alpha,\beta}$ because $\rho\ne\alpha+\beta$, so
under $\mu^{\alpha,\beta}$
the weights do not form a gamma system (Definition~\ref{ga-def}).
However, the $\check\xi$ weights
still turn out to have a tractable distribution which we record in the
next proposition.

%
%
\begin{proposition}\label{epr}
Under\vspace*{2pt} $\mu^{\alpha,\beta}$, the marginal distribution of
$\{\check\xi_x\}_{x\in\mathbb{Z} _+^2}$ is given as follows. Let $\{
h_x\}
_{x\in\mathbb{Z} _+^2}$
be arbitrary bounded Borel functions on $\mathbb{R} _+$. Then for
$n\in\mathbb{N} $,
\begin{eqnarray*}
&& E^{\mu^{\alpha,\beta}} \biggl[ \prod_{x\in\mathbb{Z} _+^2\dvtx
\llvert  x\rrvert _1\le n}
h_x(\check\xi_x) \biggr]
\\
&&\qquad = \sum
_{x_{0,n}\in\{0\}\times(\mathbb{Z} _+^2)^n}\! P(X_{0,n}=x_{0,n}) \prod
_{k=0}^n \mathbf{E}h_{x_k}(G_{\alpha+\beta})
\\
&&\quad\qquad{}\times \prod_{ \llvert  y\rrvert _1\le n\dvtx  y\notin\{x_{0,n}\}} \mathbf
{E}h_{y}(G_{\alpha
+\beta+1}).
\end{eqnarray*}
%
\end{proposition}

In other words, the distribution of the $\check\xi$ weights
is constructed as follows:
run the $(\frac{\alpha}{\alpha+\beta},\frac{\beta}{\alpha+\beta
})$ random
walk, put independent $\operatorname{Gamma}(\alpha+\beta$) variables on the path,
independent $\operatorname{Gamma}(\alpha+\beta+1$) variables off the path, and
average over the
walks.

%
%
\begin{theorem}\label{ethm}
Let the environment $\omega$ have initial distribution $\mu^{\alpha,\beta}$ on the space $\Omega_{\mathrm{NE}}$ of (\ref{hspace}),
and let
the walk $X_n$
obey transitions (\ref{hrwre1}):
\begin{longlist}[(a)]
\item[(a)] The environment process
$T_{X_n}\omega$ is a stationary ergodic Markov chain with state space
$\Omega_{\mathrm{NE}}$.

\item[(b)] The averaged distribution of walk $X_n$ is the
homogeneous $(\frac{\alpha}{\alpha+\beta},\frac{\beta}{\alpha
+\beta})$ random
walk.
\end{longlist}
\end{theorem}

Note the contrast in the behavior of the walk $X_n$.
According to Theorem~\ref{Qrwrethm},
when the environment has the distribution of a gamma system of weights,
the averaged
walk has fluctuations of order $n^{2/3}$. By part (b) above,
when the environment has the $\mu^{\alpha,\beta}$ distribution,
the averaged walk is diffusive.

%
%
\begin{remark}[(Simulations)]\label{esim}
Suppose the
environment process starts from a gamma system with parameters
$(\lambda, \rho)$, with $\rho>1$.
Simulations suggest that then
$T_{X_n}\omega$ converges to $\mu^{\alpha, \beta}$
such that $\alpha+\beta=\rho-1$ and
$(\frac{\alpha}{\alpha+\beta},\frac{\beta}{\alpha+\beta
})=\mathbf{u}
_{\lambda,\rho}$,
the characteristic direction (\ref{gchar}) of the original setting.

Under the environment distribution $\mu^{\alpha, \beta}$, the
averaged distribution of the walk $X_n$
is the diffusive $(\frac{\alpha}{\alpha+\beta},\frac{\beta
}{\alpha+\beta})$ random
walk. Simulations\vspace*{1pt} suggest that under its quenched distribution the walk
localizes,
with a positive fraction of overlap between two independent walks
in the same environment.
\end{remark}

%
%
\begin{remark} We can look at the environment as seen from the walk
with a more
general boundary, instead of simply the axes. Let $\sigma=\{y_j\}
_{j\in\mathbb{Z} }$ be a down-right path in $\mathbb{Z} ^2$ that
goes through $e_2$, $0$ and $e_1$.
That is, $y_{-1}=e_2$, $y_0=0$, $y_1=e_1$ and $y_i-y_{i-1}\in\{e_1,
-e_2\}$.
Let $\mathcal{J}=\{x\dvtx  \exists k\in\mathbb{N}\dvtx  x-(k,k)\in\sigma\}$
be the lattice
strictly to the northeast of
$\sigma$. Weights assigned to this setting are such that
$\{\xi_x\dvtx  x\in\mathcal{J}\}$ are i.i.d. $\operatorname{Gamma}(\rho)$. On the path edge
weights have
different recipes to the northwest and southeast of the origin:
\begin{eqnarray*}
&&\mbox{horizontal edge northwest of $0$: } i< 0, y_i-y_{i-1}=e_1\dvtx  \eta_{y_i}\sim\operatorname{Gamma}(\alpha+1),
\\
&&\mbox{vertical edge northwest of $0$: } i\le0, y_i-y_{i-1}=-e_2\dvtx  \zeta_{y_{i-1}}\sim\operatorname{Gamma}(\beta),
\\
&&\mbox{horizontal edge southeast of $0$: } i\ge1, y_i-y_{i-1}=e_1\dvtx  \eta_{y_i}\sim\operatorname{Gamma}(\alpha),
\\
&&\mbox{vertical edge southeast of $0$: } i> 1, y_i-y_{i-1}=-e_2\dvtx  \zeta_{y_{i-1}}\sim\operatorname{Gamma}(\beta+1).
\end{eqnarray*}
These weights are stationary as we look at the system centered at $X_n$.
The proof goes along the same lines as given below.
\end{remark}

%
%
\begin{remark}[(A degenerate limit and an invariant distribution as
seen from a last-passage competition interface)]
The results above require $\rho>1$. In the limit $\alpha\searrow0$,
$\beta\searrow0$, $\rho\searrow1$, the $\eta^{-1}, \zeta^{-1}$ weights
blow up.
We rescale so that logarithms of edge weights converge
to exponential random variables, and bulk weights vanish.
Let $\varepsilon>0$, $\rho=\varepsilon\alpha+\varepsilon\beta
+1$, and consider the weights
$( \xi^{(\varepsilon)}_{\mathbb{N} ^2}, \eta^{(\varepsilon
)}_{\mathbb{N} \times
\mathbb{Z} _+}, \zeta^{(\varepsilon)}_{\mathbb{Z} _+\times\mathbb
{N} })$
under the distribution $\mu^{\varepsilon\alpha, \varepsilon\beta
}$. The independent
weights of (\ref{ed}) now satisfy for $i,j\in\mathbb{N} $
%
%
\begin{equation}\label{nuabd}
\qquad \xi^{(\varepsilon)}_{i,j} \sim \operatorname{Gamma}(\rho),\qquad
\eta^{(\varepsilon)}_{i,0} \sim \operatorname{Gamma}(\varepsilon\alpha),\qquad
\zeta^{(\varepsilon)}_{0,j} \sim \operatorname{Gamma}(\varepsilon\beta).
\end{equation}
We can construct the weights in (\ref{nuabd})
as functions of uniform variables
as in (\ref{Z9-coup1}). Then the following limits as $\varepsilon
\searrow0$ can be
taken pointwise:
\[
-\varepsilon\log\xi^{(\varepsilon)}_{i,j} \to0, \qquad -\varepsilon
\log\eta^{(\varepsilon)}_{i,0} \to I_{i,0} \sim
\operatorname{Exp}(\alpha),
\]
and
\[
-\varepsilon\log\zeta^{(\varepsilon)}_{0,j} \to
J_{0,j} \sim \operatorname{Exp}(\beta).
\]
%
The NE induction equations (\ref{hNE1}) converge to the equations
%
%
\begin{equation}
I_x=(I_{x-e_2}-J_{x-e_1})^+\quad\mbox{and}\quad
J_x=(J_{x-e_1}-I_{x-e_2})^+. \label{nuNE1}
\end{equation}
The RWRE transition probability converges to a deterministic transition:
\[
\pi^{(\varepsilon)}_{x,x+e_1}=\frac{\eta^{(\varepsilon)}_{x+e_1}}{
\eta^{(\varepsilon)}_{x+e_1}+ \zeta^{(\varepsilon)}_{x+e_2}} \longrightarrow\mathbf{1}
\{ I_{x+e_1}<J_{x+e_2}\}\equiv\pi ^{(0)}_{x,x+e_1}
\qquad\mbox{as $\varepsilon\searrow0$.}
\]
%
The limit leads to an invariant distribution for a last-passage
system. Equations~(\ref{nuNE1}) describe inductively
the increment variables
\[
I_x=G_{0,x}-G_{0,x-e_1}\quad\mbox{and}\quad
J_x=G_{0,x}-G_{0,x-e_2}
\]
of a degenerate last-passage model with boundary weights $\{
I_{i,0},J_{0,j}\dvtx  i,j\in\mathbb{N} \}$
and zero bulk weights. This distribution on $(I_{\mathbb{N} \times
\mathbb{Z} _+},
J_{\mathbb{Z} _+\times\mathbb{N} })$
is invariant for the environment seen from the location $\varphi_n$
that starts at $\varphi_0=0$ and obeys the transition
%
%
\begin{eqnarray}
\label{transitions2}
&&\pi^{(0)}_{x,x+e_1}=\mathbf{1}
\{I_{x+e_1}<J_{x+e_2}\} \quad\mbox{and}\quad\pi^{(0)}_{x,x+e_2}=
\mathbf{1}\{I_{x+e_1}>J _{x+e_2}\}.
\end{eqnarray}
Given the environment, this
defines a deterministic path $\varphi_\centerdot$ on $\mathbb{Z} _+^2$.
We recognize in (\ref{transitions2}) the jump rule of the
competition interface (\ref{cif}).
\end{remark}

The remainder of this section is taken by the proofs.
To prove stationarity of the Markov chain it suffices to consider
the partial environment $(\eta_{\mathbb{N} e_1}, \zeta_{\mathbb{N}
e_2}, \xi_{\mathbb{N} ^2})$
because the other variables of the state are functions of these.
The notation here is that $\eta_{\mathbb{N} e_1}=\{\eta_{i e_1}\}
_{i\in\mathbb{N}
}$, and similarly for
other cases.
The next lemma proves everything in Proposition~\ref{epr} and Theorem
\ref{ethm},
except the ergodicity.

%
%
\begin{lemma}\label{elm1} Fix $n\in\mathbb{N} $ and an admissible path $x_{0,n}$
with $x_0=0$.
Fix a finite set $\mathcal{I}\subset\mathbb{Z}_+^2$, disjoint from
$(x_n+\mathbb{Z} _+^2)\cup\{x_k\}_{0\le k<n}$.
Let $\{h_k\}_{k\in\mathbb{Z}_+}$ and $\{g_u\}_{u\in\mathbb{Z}_+^2}$ be
collections of bounded\vspace*{1pt} Borel functions on $\mathbb{R} _+$.
Let $f$ be a bounded Borel function on $\mathbb{R} _+^{\mathbb{N}
+\mathbb{N} +\mathbb{N} ^2}$. Then
%
%
\begin{eqnarray}
\label{ironman} &&E^{\mu^{\alpha,\beta}} \Biggl[ P^\omega(X_{0,n}=x_{0,n})\nonumber
\\
&&\hspace*{28pt}{}\times
\prod_{k=0}^{n-1} h_{k}(
\check\xi_{x_k}) \cdot\prod_{u\in\mathcal{I}}
g_u(\check \xi_{u})
\cdot f( \eta_{x_n+\mathbb{N} e_1},
\zeta_{x_n+\mathbb{N} e_2}, \xi_{x_n+\mathbb{N}
^2}) \Biggr]
\nonumber\\[-8pt]\\[-8pt]
&&\qquad = P(X_{0,n}=x_{0,n})\nonumber
\\
&&\hspace*{31pt}{}\times \prod
_{k=0}^{n-1} \mathbf{E}\bigl[ h_k(G_{\alpha
+\beta})
\bigr] \cdot \prod_{u\in\mathcal{I}} \mathbf{E}\bigl[
g_u(G_{\alpha+\beta+1})\bigr]
\cdot E^{\mu^{\alpha,\beta}} \bigl[ f(
\eta_{\mathbb{N} e_1}, \zeta_{\mathbb{N}
e_2}, \xi_{\mathbb{N} ^2}) \bigr].\hspace*{-15pt}\nonumber
\end{eqnarray}
%
\end{lemma}

%
%
\begin{remark} Note that the independent $(\check\xi_{X_k})$ cannot
go up to
$k=n$ because $\check\xi_{X_n}=\eta_{X_n+e_1}+\zeta_{X_n+e_2}$, and these
belong in the future of the walk. Adding the statements over $x_{0,n}$ gives
the invariance of $\mu^{\alpha,\beta}$ and
the distribution of $\check\xi$. For a fixed $x_{0,n}$
we get the averaged distribution of the walk and also the statement that
when the walk looks at the $\check\xi$ weights in its past, it sees
$G_{\alpha+\beta}$-variables on its path and $G_{\alpha+\beta+1}$-variables
elsewhere.
\end{remark}

Lemma~\ref{elm1} is basically a consequence of size-biasing beta variables.
The formulation we need is in the next
lemma, whose proof we leave to the reader.

%
%
\begin{lemma} Let the gamma variables below with distinct subscripts be
independent. Then
%
%
\begin{eqnarray}\label{aa-20}
&& \mathbf{E} \biggl[ \frac{G_\alpha}{G_\alpha+G_\beta} f \biggl( G_{\alpha+\beta+1}\cdot
\frac{G_\alpha}{G_\alpha+G_\beta} \biggr)
g \biggl( G_{\alpha+\beta+1}\cdot\frac{G_\beta}{G_\alpha
+G_\beta}
\biggr) h (G_\alpha+G_\beta ) \biggr]\hspace*{-20pt}
\nonumber\\[-8pt]\\[-8pt]
&&\qquad = \frac{\alpha}{\alpha+\beta} \mathbf{E}f(G_{\alpha+1})\cdot\mathbf{E}g(G_{\beta})
\cdot\mathbf {E}h(G_{\alpha+\beta
} ).\nonumber
\end{eqnarray}
\end{lemma}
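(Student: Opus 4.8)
The plan is to reduce \eqref{aa-20} to the beta--gamma algebra together with a single size--biasing step. Write $G_\alpha,G_\beta$ for the two independent gamma variables appearing inside the fractions and $G_{\alpha+\beta+1}$ for the third, independent, gamma variable, and set $B=G_\alpha/(G_\alpha+G_\beta)$, $S=G_\alpha+G_\beta$, $T=G_{\alpha+\beta+1}$. By the beta--gamma algebra $B\sim$ Beta$(\alpha,\beta)$ and $S\sim$ Gamma$(\alpha+\beta)$ are independent, and by hypothesis $T$ is independent of the pair $(B,S)$; in particular $B,S,T$ are mutually independent. Since $h$ acts only on $S$, the left side of \eqref{aa-20} factors as
\[
\mE\bigl[\,B\,f(TB)\,g(T(1-B))\,h(S)\,\bigr]
=\mE\bigl[h(S)\bigr]\cdot\mE\bigl[\,B\,f(TB)\,g(T(1-B))\,\bigr]
=\mE\bigl[h(G_{\alpha+\beta})\bigr]\cdot\mE\bigl[\,B\,f(TB)\,g(T(1-B))\,\bigr],
\]
so it remains to prove $\mE[\,B\,f(TB)\,g(T(1-B))\,]=\tfrac{\alpha}{\alpha+\beta}\,\mE f(G_{\alpha+1})\,\mE g(G_\beta)$.

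Next I would size--bias $B$. A one--line computation with the Beta$(\alpha,\beta)$ density gives $\mE[B]=\alpha/(\alpha+\beta)$ and shows that the law with density proportional to $b$ times the Beta$(\alpha,\beta)$ density is Beta$(\alpha+1,\beta)$. Because $T$ is independent of $B$, for every bounded measurable $\Phi$,
\[
\mE\bigl[\,B\,\Phi(B,T)\,\bigr]=\mE[B]\,\mE\bigl[\,\Phi(B',T)\,\bigr]
=\frac{\alpha}{\alpha+\beta}\,\mE\bigl[\,\Phi(B',T)\,\bigr],
\]
where $B'\sim$ Beta$(\alpha+1,\beta)$ is independent of $T\sim$ Gamma$(\alpha+\beta+1)$. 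Taking $\Phi(b,t)=f(tb)g(t(1-b))$ reduces the claim to $\mE[\,f(TB')\,g(T(1-B'))\,]=\mE f(G_{\alpha+1})\cdot\mE g(G_\beta)$.

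Finally I would apply the beta--gamma algebra in the reverse direction: if $T\sim$ Gamma$(s+t)$ and $B'\sim$ Beta$(s,t)$ are independent, then $TB'$ and $T(1-B')$ are independent with laws Gamma$(s)$ and Gamma$(t)$ respectively — this is just the inverse of the bijection $(G_s,G_t)\mapsto(G_s+G_t,\,G_s/(G_s+G_t))$, and can be checked by a Jacobian computation if desired. With $s=\alpha+1$, $t=\beta$ this gives $TB'\sim$ Gamma$(\alpha+1)$ and $T(1-B')\sim$ Gamma$(\beta)$ independent, hence the displayed identity and therefore \eqref{aa-20}. The argument is entirely routine; there is no genuine obstacle, the only thing needing care being the bookkeeping of the three independences above and the observation that size--biasing $B$ leaves it independent of $T$. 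One could alternatively bypass the beta--gamma algebra altogether and prove \eqref{aa-20} directly via the change of variables $(G_\alpha,G_\beta,G_{\alpha+\beta+1})\mapsto(TB,\,T(1-B),\,S)$, but the structured argument above is shorter.
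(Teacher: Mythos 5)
Your proof is correct, and it follows exactly the route the paper intends: the paper omits the proof ("we leave to the reader") but describes the lemma as a consequence of size-biasing beta variables, which is precisely your decomposition into the beta--gamma factorization, the size-biasing of $B$ from Beta$(\alpha,\beta)$ to Beta$(\alpha+1,\beta)$ with factor $\alpha/(\alpha+\beta)$, and the reverse beta--gamma step yielding independent Gamma$(\alpha+1)$ and Gamma$(\beta)$ variables.
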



\begin{pf*}{Proof of Lemma~\ref{elm1}}
We assume that the first step of the walk is $e_1$ and calculate the
distribution.
Introduce functions $\Phi$
to represent north--east induction (\ref{hNE1})--(\ref{hNE2}),
specifically to calculate
the $\check\xi$ weights
on the vertical line $x\cdot e_1=0$ and $\zeta$ weights on the
vertical line $x\cdot e_1=1$, for $x\cdot e_2\ge1$,
%
\begin{eqnarray*}
(\check\xi_{\mathbb{N} e_2}, \zeta_{e_1+\mathbb{N} e_2}) &=&(\check\xi_{\mathbb{N} e_2},
\zeta_{e_1+ e_2}, \zeta _{e_1+e_2+\mathbb{N}
e_2})
\\
&=& \biggl( \Phi_1(\eta_{e_1+e_2}, \zeta_{e_2+\mathbb{N} e_2}, \xi
_{e_1+e_2+\mathbb{N} e_2} ), \xi_{e_1+e_2}\frac{\zeta_{e_2}}{\eta_{e_1}+\zeta_{e_2}},
\\
&&\hspace*{85pt}\Phi_2(\eta_{e_1+e_2},
\zeta_{e_2+\mathbb{N} e_2}, \xi _{e_1+e_2+\mathbb{N}
e_2} ) \biggr).
\end{eqnarray*}
%

Let $h_0, g, f_i$ be bounded Borel functions of their arguments.
The first equality below implements definitions. In the second
equality below apply (\ref{aa-20}) to the triple
$(G_\alpha, G_\beta, G_{\alpha+\beta+1}) = (\eta_{e_1}, \zeta
_{e_2}, \xi_{e_1+e_2})$,
and\vspace*{1pt} note that all other variables are independent of this triple.
Let $G_{\alpha+\beta+1}^{\mathbb{N} e_2}$ denote an i.i.d. $\operatorname{Gamma}(\alpha
+\beta+1)$ sequence. Augment
temporarily the probability space
with independent $G_{\alpha+1}$ and $G_\beta$
variables that are also independent of
all the other variables in $f_2$:
\begin{eqnarray*}
&& E^{\mu^{\alpha,\beta}} \bigl[ P^\omega(X_1=e_1)
h_0(\check\xi _0) g( \check\xi_{\mathbb{N} e_2} )
f_1( \eta_{e_1+\mathbb{N} e_1} ) f_2(\zeta_{e_1+\mathbb{N} e_2} )
f_3( \xi _{e_1+\mathbb{N} ^2}) \bigr]
\\
&&\qquad = E^{\mu^{\alpha,\beta}} \biggl[ \frac{\eta_{e_1}}{\eta
_{e_1}+\zeta_{e_2}} h_0(
\eta_{e_1}+\zeta_{e_2}) f_1( \eta_{e_1+\mathbb{N} e_1}
) f_3( \xi_{e_1+\mathbb{N} ^2})
\\
&&\hspace*{29pt}\qquad\quad{}\times g \biggl( \Phi_1 \biggl( \xi_{e_1+e_2}
\frac{\eta_{e_1}}{\eta_{e_1}+\zeta
_{e_2}}, \zeta_{e_2+\mathbb{N} e_2}, \xi_{e_1+e_2+\mathbb{N} e_2} \biggr) \biggr)
\\
&&\hspace*{29pt}\qquad\quad{}\times f_2 \biggl( \xi_{e_1+e_2}
\frac{\zeta_{e_2}}{\eta_{e_1}+\zeta
_{e_2}},
\\
&&\hspace*{92pt}
 \Phi_2 \biggl( \xi_{e_1+e_2}
\frac{\eta_{e_1}}{\eta_{e_1}+\zeta
_{e_2}}, \zeta_{e_2+\mathbb{N} e_2}, \xi_{e_1+e_2+\mathbb{N} e_2} \biggr) \biggr)
\biggr]
\\
&&\qquad =\frac{\alpha}{\alpha+\beta} \mathbf{E}\bigl[h_0( G_{\alpha+\beta})\bigr]
E^{\mu^{\alpha,\beta}}\bigl[f_1( \eta _{e_1+\mathbb{N} e_1} )\bigr]
E^{\mu^{\alpha,\beta}}\bigl[ f_3( \xi_{e_1+\mathbb{N} ^2})\bigr]
\\[-1pt]
&&\qquad\quad{}\times E^{\mu^{\alpha,\beta}} \bigl[ g \bigl( \Phi_1(
G_{\alpha+1}, \zeta_{e_2+\mathbb{N} e_2}, \xi_{e_1+e_2+\mathbb{N} e_2} ) \bigr)
\\[-1pt]
&&\hspace*{39pt}\qquad\quad{}\times f_2 \bigl( G_\beta,
\Phi_2( G_{\alpha+1}, \zeta_{e_2+\mathbb{N} e_2},
\xi_{e_1+e_2+\mathbb{N} e_2} ) \bigr) \bigr]
\\[-1pt]
&&\qquad =\frac{\alpha}{\alpha+\beta} \mathbf{E}\bigl[h_0( G_{\alpha+\beta})\bigr]
\mathbf{E}\bigl[ g\bigl( G_{\alpha
+\beta+1}^{\mathbb{N}
e_2}\bigr)\bigr]
E^{\mu^{\alpha,\beta}}\bigl[f_1( \eta_{\mathbb{N} e_1} )\bigr]
\\[-1pt]
&&\qquad\quad {}\times E^{\mu^{\alpha,\beta}}\bigl[f_2(
\zeta_{\mathbb{N} e_2} )\bigr] E^{\mu^{\alpha,\beta}}\bigl[ f_3(
\xi_{\mathbb{N} ^2})\bigr] 
\\[-1pt]
&&\qquad =\frac{\alpha}{\alpha+\beta} \mathbf{E}\bigl[h_0( G_{\alpha+\beta})\bigr]
\mathbf{E}\bigl[ g\bigl( G_{\alpha
+\beta+1}^{\mathbb{N}
e_2}\bigr)\bigr]
E^{\mu^{\alpha,\beta}} \bigl[f_1( \eta_{\mathbb{N} e_1} ) f_2(
\zeta_{\mathbb{N} e_2} ) f_3( \xi_{\mathbb{N} ^2}) \bigr].
\end{eqnarray*}
%
In the second-to-last equality, inside $f_1$ and $f_3$ we simply shift by $-e_1$.
Inside $f_2$ variable $G_\beta$ furnishes $\zeta_{e_2}$.
Here is the key point: at this stage the Burke property applies to the mappings
$(\Phi_1,\Phi_2)$
because $G_{\alpha+1}$ furnishes $\eta_{e_1+e_2}$, and thereby
the parameters of the input weights
satisfy $(\alpha+1)+\beta=\rho$. The beta size-biasing
put us back into the setting of a gamma system.
Thus $(\Phi_1,\Phi_2)$ outputs two independent sequences.
The first one denoted by $G_{\alpha+\beta+1}^{\mathbb{N} e_2}$
is i.i.d. $\operatorname{Gamma}(\alpha+\beta+1)$ and it
represents the distribution of $\check\xi_{\mathbb{N} e_2}$.
The second one is i.i.d. $\operatorname{Gamma}(\beta)$, which we take to be $\zeta
_{e_2+\mathbb{N} e_2}$. In the last equality we can combine the three
$\mu
^{\alpha,\beta}$-expectations
because the independence is in accordance with the definition of $\mu
^{\alpha,\beta}$.

Standard arguments generalize the product $f_1f_2f_3$ so that
\begin{eqnarray*}
&& E^{\mu^{\alpha,\beta}} \bigl[ P^\omega(X_1=e_1)
h_0(\check\xi _0) g( \check\xi_{\mathbb{N} e_2} ) F(
\eta_{e_1+\mathbb{N} e_1}, \zeta_{e_1+\mathbb{N} e_2}, \xi _{e_1+\mathbb{N} ^2}) \bigr]
\\
&&\qquad =p(e_1) \mathbf{E}\bigl[h_0( G_{\alpha+\beta})\bigr]
\mathbf{E}\bigl[ g\bigl( G_{\alpha
+\beta+1}^{\mathbb{N}
e_2}\bigr)\bigr]
E^{\mu^{\alpha,\beta}} \bigl[F( \eta_{\mathbb{N} e_1}, \zeta_{\mathbb{N} e_2},
\xi_{\mathbb{N} ^2}) \bigr]
\end{eqnarray*}
%
for Borel functions $h_0, g, F$ such that the expectations make sense.
Reflection across the diagonal gives the alternative formula where the
first step
is $e_2$ instead of $e_1$, $\check\xi_{\mathbb{N} e_2}$ is replaced by
$\check\xi_{\mathbb{N} e_1}$
and $G_{\alpha+\beta+1}^{\mathbb{N} e_2}$ is replaced by $G_{\alpha
+\beta
+1}^{\mathbb{N} e_1}$.\vspace*{1pt}

Referring to the goal (\ref{ironman}), let $\mathcal{I}_0=\mathcal
{I}\setminus
(x_1+\mathbb{Z} _+^2)$
and take $g( \check\xi_{\centerdot} )=\break \prod_{u\in\mathcal{I}_0}
g_u(\check\xi_u)$.
We can combine the $e_1$ and $e_2$ cases into this statement, which is
(\ref{ironman}) for $n=1$:
%
%
\begin{eqnarray}
\label{muab291} &&
E^{\mu^{\alpha,\beta}} \biggl[ P^\omega(X_1=x_1)
h_0(\check\xi _0)\cdot\prod
_{u\in\mathcal{I}_0} g_u(\check\xi_u)
\cdot F( \eta_{x_1+\mathbb{N} e_1},
\zeta_{x_1+\mathbb{N} e_2}, \xi _{x_1+\mathbb{N} ^2}) \biggr]
\nonumber\\[-9pt]\\[-9pt]
&&\qquad =p(x_1) \mathbf{E}\bigl[h_0( G_{\alpha+\beta})\bigr]
\cdot\prod_{u\in\mathcal
{I}_0} \mathbf{E}\bigl[ g_u(
G_{\alpha+\beta+1})\bigr]
\cdot E^{\mu^{\alpha,\beta}} \bigl[F(
\eta_{\mathbb{N} e_1}, \zeta_{\mathbb{N} e_2}, \xi_{\mathbb{N} ^2}) \bigr].\hspace*{-15pt}\nonumber
\end{eqnarray}

To obtain (\ref{ironman}), do induction on the length $n$ of the path.
Let $\mathcal{I}'=\mathcal{I}\cap(x_1+\mathbb{Z} _+^2)$.
In (\ref{muab291}) take
\begin{eqnarray*}
F( \eta_{\mathbb{N} e_1}, \zeta_{\mathbb{N} e_2}, \xi _{\mathbb{N} ^2})
&=& \prod
_{i=1}^{n-1} \pi_{x_i-x_1,x_{i+1}-x_1}(\omega)
\cdot\prod_{k=1}^{n-1} h_{k}(
\check\xi_{x_k-x_1})
\\[-2pt]
&&{}\times\! \prod_{u \in\mathcal{I}'-x_1}
g_{u+x_1}(\check \xi_{u})
\cdot f( \eta_{x_n-x_1+\mathbb{N} e_1},
\zeta_{x_n-x_1+\mathbb
{N} e_2}, \xi _{x_n-x_1+\mathbb{N} ^2}).
\end{eqnarray*}
Assuming (\ref{ironman}) holds for paths of length $n-1$, the
right-hand side
of (\ref{muab291}) turns into the right-hand side of (\ref{ironman}).
\end{pf*}

The ergodicity claim of Theorem~\ref{ethm} is in the next lemma.

%
%
\begin{lemma} With initial distribution $\mu^{\alpha,\beta}$, the
stationary process
$S_n=( \eta_{X_n+\mathbb{N} e_1}, \zeta_{X_n+\mathbb{N} e_2}$,
$\xi_{X_n+\mathbb{N}
^2})$ is ergodic.
\end{lemma}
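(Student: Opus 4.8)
The plan is to prove that the stationary Markov chain $(S_n)_{n\ge0}$ of Theorem \ref{e:thm}(a) has trivial tail $\sigma$-field $\cT=\bigcap_{n\ge0}\sigma(S_n,S_{n+1},\dots)$; since the invariant $\sigma$-field of a stationary process is contained in its tail, this yields ergodicity (in fact mixing). First I would use Lemma \ref{e:lm1} to separate the future of the environment process from a growing ``past''. For $n\ge0$ set $\mathcal{G}_n=\sigma\bigl(X_{0,n},\ \{\check\xi_u:u\in\Z_+^2\setminus(X_n+\Z_+^2)\}\bigr)$. Summing \eqref{ironman} over the admissible paths $x_{0,n}$, with $h_k$ ranging over bounded functions of the $\check\xi$-weights on the traversed path, $g_u$ over those of the off-path $\check\xi$-weights lying outside $X_n+\Z_+^2$, and $f$ over bounded functions of $S_n$, one reads off that, conditionally on $\mathcal{G}_n$, the state $S_n$ has the fixed law $\mu^{\alpha,\beta}$ and is independent of $\mathcal{G}_n$. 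Since $(S_m)_{m\ge n}$ is a deterministic function of $S_n$ and the coin-flip variables $(U_k)_{k>n}$, and these are independent of the pair $(S_n,\mathcal{G}_n)$, the Markov property upgrades this to: $\sigma(S_m:m\ge n)$ is independent of $\mathcal{G}_n$. In particular $\cT$ is independent of $\mathcal{G}_n$ for every $n$.

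Next, by Theorem \ref{e:thm}(b) the averaged walk is the $\bigl(\tfrac{\alpha}{\alpha+\beta},\tfrac{\beta}{\alpha+\beta}\bigr)$ random walk, which (as $\alpha,\beta>0$) has positive speed in both coordinates, so $\P$-a.s.\ both coordinates of $X_n$ tend to $\infty$. Hence $\bigcap_n(X_n+\Z_+^2)=\emptyset$, the fields $\mathcal{G}_n$ increase to $\mathcal{G}_\infty=\sigma\bigl(X_{0,\infty},\ \{\check\xi_u:u\in\Z_+^2\}\bigr)$, and therefore $\cT$ is independent of $\mathcal{G}_\infty$. I would then close the argument by showing $\cT\subseteq\mathcal{G}_\infty$ up to $\P$-null sets, which makes $\cT$ independent of itself and hence trivial. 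This last inclusion is where the Burke property of Section \ref{sec:lg} enters: the NE-induction flips along the walk's trace express the edge weights lying behind the walk as a function of the recorded path $X_{0,\infty}$ together with the $\check\xi$-field, so that anything depending only on the asymptotics of $(S_m)_m$ turns out to be $\mathcal{G}_\infty$-measurable. Concretely, for $A\in\cT$ one approximates $\mathbf 1_A$ by $\sigma(S_n,\dots,S_m)$-measurable functions and uses that, by \eqref{ironman}, the conditional law of $(S_n,\dots,S_m)$ given $\mathcal{G}_m$ is the explicit gamma-system law, which after the Burke bookkeeping is $\mathcal{G}_\infty$-measurable; letting $m\to\infty$ gives $\E[\mathbf 1_A\mid\mathcal{G}_\infty]=\mathbf 1_A$.

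I expect the main obstacle to be precisely this last inclusion. Each coordinate $\eta_x,\zeta_x$ of the environment is a ratio of partition functions over the whole rectangle southwest of $x$, so the chain carries long-range dependence toward the origin, and it is not automatic that a tail event of $(S_n)$ is captured by the ``past'' data $(X_{0,\infty},\check\xi)$. What makes it work is the independence built into the Burke property — the weights behind the advancing walk being an independent function of exactly the path and the $\check\xi$-field — combined with ballisticity, which guarantees that this past eventually exhausts $\Z_+^2$. The auxiliary steps (the summation in \eqref{ironman} over path values, handling the $\cI$-disjointness bookkeeping there, and the passage from cylinder functions to general tail events via martingale convergence) I expect to be routine.
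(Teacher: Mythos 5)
Your first step is sound: summing \eqref{ironman} over paths does show that $S_n$ is independent of $\mathcal{G}_n=\sigma\bigl(X_{0,n},\{\check\xi_u:u\notin X_n+\Z_+^2\}\bigr)$ with fixed law $\mu^{\alpha,\beta}$, and hence that the tail field $\cT$ of $(S_m)$ is independent of $\mathcal{G}_\infty$. The gap is exactly where you flagged it, and the sketch you give for closing it does not work. The inclusion $\cT\subseteq\mathcal{G}_\infty$ cannot be obtained by ``NE--induction flips along the walk's trace'': north-east induction \eqref{h:NE1}--\eqref{h:NE2} produces the $\check\xi$'s \emph{from} the $(\eta,\zeta)$ weights lying to the north-east, not the reverse, so $\mathcal{G}_\infty=\sigma(X_{0,\infty},\check\xi_{\Z_+^2})$ does not determine the weights that constitute $S_m$ — those are precisely the data that Lemma \ref{e:lm1} shows to be \emph{independent} of $(X_{0,m},\check\xi\text{-past})$. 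Indeed, once you know $\cT\perp\mathcal{G}_\infty$, the inclusion $\cT\subseteq\mathcal{G}_\infty$ (mod null sets) is \emph{equivalent} to the triviality of $\cT$, so as written the argument is circular: the one step that remains to be proved is a restatement of the conclusion. To rescue a tail-triviality argument you would need to show $\cT$ is independent of something that \emph{contains} it, e.g.\ of $\sigma(S_0,\dots,S_n)$ for every $n$; but $S_n$ is a function of $S_0$ and the path, so it is certainly not independent of $S_0$, and \eqref{ironman} gives no decoupling of that kind.

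The paper takes an entirely different, finite-dimensional route: project $S_n$ onto the weights in an $M\times M$ window to get a stationary Markov chain $\svec_n$ on $\R_+^{2M+M^2}$, and show that the $m$-step transition kernel with $m=2M+1$ has an a.e.\ positive density with respect to Lebesgue measure. The positivity comes from conditioning on the $\sigma$-algebra $\cH_x$ of \eqref{cHx}, observing that the freshly revealed $\xi$'s are independent Gamma$(\rho)$ with everywhere positive densities, and that the NE-induction equations \eqref{muab9}--\eqref{muab9.6} are bijective rational maps with nonvanishing Jacobians; a chain whose transition density is a.e.\ positive admits no nontrivial invariant set. If you want to keep your tail/mixing strategy, you would still need an irreducibility input of this kind to break the circularity.
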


\begin{pf} Denote a generic state
by $S=( \eta_{\mathbb{N} e_1}, \zeta_{\mathbb{N} e_2}, \xi
_{\mathbb{N} ^2})$.
It suffices to show that, for any function $f\in L^1(\mu^{\alpha,\beta})$,
the averages
\[
n^{-1}\sum_{k=0}^{n-1}
E^S\bigl[f(S_k)\bigr]
\]
converge\vspace*{1pt} to a constant in $L^1(\mu^{\alpha,\beta})$ (\cite{rosenblatt}, pages~91--95).
By approximation in $L^1(\mu^{\alpha,\beta})$, it suffices to prove
this for a local function $f$, that is,
a function of the variables $\mathbf{s}=(\eta_{i,0}, \zeta_{0,j},
\xi
_{i,j})_{i,j\in[M]}$
for an arbitrary but fixed $M\in\mathbb{N} $.
Let $\mathbf{s}=\varphi(S)$ denote the projection mapping, and let
the projection of the
stationary process $S_n$ be
$\mathbf{s}_n=\varphi(S_n)=(\eta_{X_n+(i,0)}, \zeta_{X_n+(0,j)},
\xi
_{X_n+(i,j)})_{i,j\in[M]}$.

Process $\mathbf{s}_n$ is also a stationary Markov chain, with state space
$\mathbb{R} _+^{2M+M^2}$ and invariant distribution $\nu=\mu^{\alpha,\beta
}\circ\varphi^{-1}$. Under
$\nu$
coordinates of $\mathbf{s}$ are independent with distributions
$\eta_{i,0} \sim \operatorname{Gamma}(\alpha$), $\zeta_{0,j} \sim \operatorname{Gamma}(\beta$) and $\xi_{i,j} \sim \operatorname{Gamma}(\rho)$.

Given state $\mathbf{s}=(\eta_{i,0}, \zeta_{0,j}, \xi_{i,j} )_{
i,j\in[M]}$,
we compute the variables
$\{\eta_x, \zeta_x\dvtx  x\in[M]^2\}$ via north--east induction
(\ref{hNE1}). The transition from state $\mathbf{s}$ to a new state
goes by two steps:
(i) randomly shift $\mathbf{s}$ by $e_1$ or $e_2$;
(ii) add fresh variables to the north or east
to replace the variables lost from south or west in the shift of the
$M\times M$ square.

Precisely speaking, from $\mathbf{s}=(\eta_{i,0}, \zeta_{0,j}, \xi
_{i,j})_{ i,j\in[M]}$
the process
jumps to either $\mathbf{t}'$ or $\mathbf{t}''$, according to the
following two cases:
\begin{longlist}[(a)]
\item[(a)] The shift is $e_1$ and
$\mathbf{t}'=(\eta_{i+1,0}, \zeta_{1,j}, \xi_{i+1,j})_{i,j\in[M]}$ where
the new independently chosen variables are $\eta_{M+1,0} \sim \operatorname{Gamma}(\alpha$) and $\xi_{M+1,j} \sim \operatorname{Gamma}(\rho)$ for $j\in[M]$.
\item[(b)] The shift is $e_2$ and
$\mathbf{t}''=(\eta_{i,1}, \zeta_{0,j+1}, \xi_{i,j+1})_{i,j\in
[M]}$ where
the new independently chosen variables are $\zeta_{0,M+1} \sim \operatorname{Gamma}(\beta$) and $\xi_{i,M+1} \sim \operatorname{Gamma}(\rho)$ for $i\in[M]$.
\end{longlist}
The probabilities of the two alternatives are
\[
\pi\bigl(\mathbf{s}, \mathbf{t}'\bigr)=\frac{\eta_{e_1}}{\eta_{e_1}+\zeta_{e_2}} \quad
\mbox{and}\quad \pi\bigl(\mathbf{s}, \mathbf{t}''
\bigr)=\frac{\zeta_{e_2}}{\eta_{e_1}+\zeta_{e_2}}.
\]
Let $\pi(\mathbf{s}, d\mathbf{t})$ denote the transition probability
of the Markov
chain $\mathbf{s}_n$: the shift followed by the random choice of new
coordinates
to complete the square \mbox{$[M]\times[M]$}.
The task is to check that $\mathbf{s}_n$ is an ergodic process.

Two general observations about checking the ergodicity of a Markov
transition $P$ with invariant distribution $\nu$. (i) Suppose
$\nu$ has a density with respect
to a background measure $\lambda$. Then it is
enough to check that, for $\nu$-a.e. $x$, $P(x,dy)$ has a density $p(x,y)$
with respect
to $\lambda(dy)$ such that $p(x,y)>0$ for $\lambda$-a.e. $y$.
For then, if $A$ is a $\nu$-a.s. invariant measurable set such that
$\nu(A^c)>0$, taking $x\in A^c$ in
\[
\mathbf{1}_A(x)= P(x,A)=\int_A p(x,y)
\lambda(dy)
\]
shows that $\lambda(A)=0$ and thereby $\nu(A)=0$.
(ii) It is enough to check the ergodicity of some power $P^m$.

%
%
\begin{figure}[b]

\includegraphics{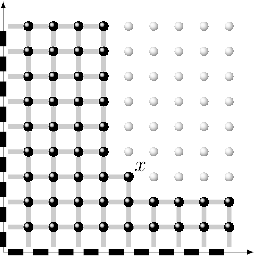}

\caption{The $\sigma$-algebra $\mathcal{H}_x$. The dark black sites
in the interior
and the thickset lines on the axes denote the $\{\xi, \eta, \zeta\}$
variables that generate $\mathcal{H}_x$. The gray lines denote $\{\eta, \zeta
\}$ variables computed via north--east induction from information
contained in $\mathcal{H}_x$. Finally the lighter gray sites denote
$\xi$
variables independent of $\mathcal{H}_x$.}\label{fig2}
\end{figure}

We show that for $m=2M+1$, $\pi^{m}(\mathbf{s}, d\mathbf{t})$ has a
Lebesgue almost everywhere positive density
on $\mathbb{R} _+^{2M+M^2}$.
Let $B$ be a Borel subset
of $\mathbb{R} _+^{2M+M^2}$. Write
%
%
\begin{equation}
T_x\mathbf{s}=\bigl(\eta_{x+(i,0)}, \zeta_{x+(0,j)},
\xi _{x+(i,j)}\dvtx  i,j\in[M]\bigr) \label{Txs}
\end{equation}
for the
shifted configuration in the $M\times M$ square:
%
%
\begin{eqnarray}
\label{muab88} \pi^{m}(\mathbf{s}, B)
&=&\sum
_{x\in\mathbb{Z} _+^2\dvtx  \llvert  x\rrvert _1=m} E^{\mu^{\alpha,\beta}} \bigl[ \mathbf
{1}_B(T_x\mathbf{s}) P^\omega_0
\{X_m=x\} \big\vert\varphi(S_0)=\mathbf{s} \bigr]
\nonumber\\[-8pt]\\[-8pt]
&=&\sum_{x\in\mathbb{Z} _+^2\dvtx  \llvert  x\rrvert _1=m} E^{\mu^{\alpha,\beta}} \bigl[
E^{\mu^{\alpha,\beta}}\bigl\{ \mathbf{1}_B(T_x\mathbf{s})
\vert\mathcal{H}_x \bigr\} P^\omega_0\{
X_m=x\} \big\vert\varphi(S_0)=\mathbf{s} \bigr].\hspace*{-20pt}\nonumber
\end{eqnarray}
On the first line above, $E^{\mu^{\alpha,\beta}}$ represents the choices
of fresh coordinates while the shifts are in the quenched probability
$P^\omega_0\{X_m=x\}$. After that
we conditioned on the $\sigma$-algebra (Figure~\ref{fig2})
%
%
\begin{equation}
\mathcal{H}_x=\sigma \bigl\{\eta_{\mathbb{N} e_1},
\zeta_{\mathbb{N}
e_2}, \xi_x, \{\xi_{i,j}\dvtx  \mbox{$i\le x
\cdot e_1-1$ or $j\le x\cdot e_2-1$} \} \bigr\}.
\label{cHx}
\end{equation}
%

$X_m=x$ implies $\llvert  x\rrvert _1=m$, and then $m=2M+1$ guarantees
that $\mathcal{H}_x$ is large enough to contain the event
$\varphi(S_0)=\mathbf{s}$.
The quenched probability
$P^\omega_0\{ X_m=x\}$ is also $\mathcal{H}_x$-measurable.
Of the variables that make up $T_x\mathbf{s}$ in (\ref{Txs}), the
$\xi
_{x+(i,j)}$'s are independent
of $\mathcal{H}_x$, but the $\eta_{x+(i,0)}$'s and $\zeta_{x+(0,j)}$'s
depend on
$\mathcal{H}_x$ through the equations
%
%
\begin{equation}
\eta_{x+(i,0)}=\xi_{x+(i,0)}\frac{\eta
_{x+(i,-1)}}{\eta_{x+(i,-1)}+\zeta_{x+(i-1,0)}}, \qquad i=1,\ldots,
M \label{muab9}
\end{equation}
and
\[
\zeta_{x+(0,j)}=\xi_{x+(0,j)}\frac{\zeta_{x+(-1,j)}}{\eta
_{x+(0,j-1)}+\zeta_{x+(-1,j)}}, \qquad j=1,
\ldots, M.
\]
%

The situations for $\{\eta_{x+(i,0)}\}$ and $\{\zeta_{x+(0,j)}\}$ are
symmetric,
so let us look at equation (\ref{muab9}) closely. $\mathcal{H}_x$ contains
variables $\{\zeta_x; \eta_{x+(i,-1)}\dvtx  i\in[M]\}$ because these
can be
computed by north--east induction from the variables listed in (\ref
{cHx}), so these
are taken as given in (\ref{muab9}). Variables $\{\xi_{x+(i,0)}\dvtx  i\in
[M]\}$ are picked
i.i.d. $\operatorname{Gamma}(\rho)$, independently of $\mathcal{H}_x$, while
variables $\{\zeta_{x+(i-1,0)}\dvtx  i=2,\ldots,M\}$ are calculated along
the way
from the equations
%
%
\begin{equation}
\qquad \zeta_{x+(i-1,0)}=\xi_{x+(i-1,0)}\frac{\zeta
_{x+(i-2,0)}}{\eta_{x+(i-1,-1)}+\zeta_{x+(i-2,0)}}, \qquad i=2,
\ldots, M. \label{muab96}
\end{equation}
Regarding $\{\zeta_x; \eta_{x+(i,-1)}\dvtx  i\in[M]\}$ as given parameters,
equations (\ref{muab9}) and (\ref{muab96}) show
that the vectors $\bar\eta=(\eta_{x+(i,0)}\dvtx  i\in[M])$ and
$\bar\xi=(\xi_{x+(i,0)}\dvtx  i\in[M])$
in $(0,\infty)^M$
are bijective functions of each other, and these functions are rational
functions with positive coefficients. (The coefficients themselves are
functions of $\{\zeta_x; \eta_{x+(i,-1)}\dvtx  i\in[M]\}$.) Thus\vspace*{2pt} the
Jacobians
of these functions cannot vanish on $(0,\infty)^M$.
Consequently, from the everywhere positive density of
$\bar\xi$ [product of $\operatorname{Gamma}(\rho)$ distributions], we get an everywhere
positive density $f_1$ for $\bar\eta$, for every given value of $\{
\zeta_x; \eta_{x+(i,-1)}\dvtx  i\in[M]\}$.

This argument can be repeated to get an everywhere
positive density $f_2$ for the vector $\bar\zeta=(\zeta_{x+(0,j)}\dvtx
i\in[M])$,
for every given value of the variables specified by the conditioning on
$\mathcal{H}_x$.

Let $f$ denote the (everywhere positive) density of the vector $(\xi
_{x+(i,j)}\dvtx  i,j\in[M])$. With this notation
we can write
\begin{eqnarray*}
&& E^{\mu^{\alpha,\beta}}\bigl[ \mathbf{1}_B(T_x\mathbf{s})
\big\vert \mathcal{H}_x\bigr] = \int_{\mathbb{R} _+^{2M+M^2}}
\mathbf{1}_B(u,v,w) f_1(u) f_2(v) f(w) \,du \,dv
\,dw,
\end{eqnarray*}
where the right-hand side is not a constant, but the densities $f_1$
and $f_2$
depend also on the variables specified by the conditioning on $\mathcal{H}_x$.
On the right the densities are multiplied due to independence that
comes from
dependence on disjoint sets of $\xi$ variables.
This formula can be substituted into (\ref{muab88}) to conclude that
$\pi^m(\mathbf{s}, \cdot)$ has an a.e. positive density on
$(0,\infty
)^{2M+M^2}$.
\end{pf}

\begin{appendix}
\section*{Appendix: Auxiliary results}\label{secapp}
\setcounter{equation}{0}
\setcounter{theorem}{0}

This appendix contains a comparison lemma
for partition functions, a large deviation bound for the log-gamma polymer
and an ergodic theorem for cocycles.

\subsection{Comparison lemma for partition functions}
Let arbitrary weights $\{V_x\}_{x\in\mathbb{Z} _+^2}$ be given,
and define partition functions as in (\ref{hZ}).
For a subset $A\subseteq\Pi_{u,v}$, define the restricted partition
function (unnormalized polymer measure) by
\[
Z_{u,v}(A)= \sum_{x_\centerdot\in A} \prod
_{i=1}^{\llvert  v-u\rrvert _1} V ^{-1}_{x_i}.
\]
%
Recall\vspace*{1pt} the definitions of the exit points (\ref{gxexit})--(\ref{gyexit}).
The restriction $A=\{{t_{e_1}}>0\}$ means that the first step of the
path is $e_1$. In other words,
$Z_{0,x}({t_{e_1}}>0)= V_{e_1}^{-1}Z_{e_1,x}$, defined for
$x\cdot e_1\ge1$.

%
%
\begin{lemma} \label{Zcomp-lm1}
For $m\ge2$ and $n\ge1$
we have this comparison of partition
functions:
%
%
\begin{equation}
\frac{Z_{0,(m-1,n)}({t_{e_1}}>0)}{Z_{0,(m,n)}({t_{e_1}}
>0)} \le \frac{Z_{(1,1), (m-1,n)}}{Z_{(1,1),(m,n)}} \le\frac{Z_{0,(m-1,n)}({t_{e_2}}>0)}{Z_{0,(m,n)}({t_{e_2}}>0)}.
\label
{Zcomp-1}
\end{equation}
\end{lemma}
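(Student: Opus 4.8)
My plan is to reduce \eqref{Zcomp-1} to a monotonicity statement for the ratios $\rho_u(v):=Z_{u,\,v-e_1}/Z_{u,\,v-e_2}$ across the two anti-diagonal neighbours of the common endpoint, and then to prove that monotonicity by an induction driven by the first-step recursion for partition functions. First I would rewrite the three fractions in \eqref{Zcomp-1}: by the identities $Z_{0,x}(\xexit>0)=\wgtc_{e_1}^{-1}Z_{e_1,x}$ and $Z_{0,x}(\yexit>0)=\wgtc_{e_2}^{-1}Z_{e_2,x}$ noted before the lemma, \eqref{Zcomp-1} is equivalent to
\[
\frac{Z_{(1,0),(m-1,n)}}{Z_{(1,0),(m,n)}}\;\le\;\frac{Z_{(1,1),(m-1,n)}}{Z_{(1,1),(m,n)}}\;\le\;\frac{Z_{(0,1),(m-1,n)}}{Z_{(0,1),(m,n)}}.
\]
The last-step recursion $Z_{u,v}=\wgtc_v^{-1}(Z_{u,v-e_1}+Z_{u,v-e_2})$ gives $Z_{u,v-e_1}/Z_{u,v}=\wgtc_v\,\rho_u(v)/(1+\rho_u(v))$, a strictly increasing function of $\rho_u(v)$. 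Hence, when $n\ge2$ (so that every $Z_{u,v-e_2}$ above is defined), it suffices to prove $\rho_{(1,0)}(v)\le\rho_{(1,1)}(v)\le\rho_{(0,1)}(v)$ with $v=(m,n)$. The case $n=1$ I would settle by hand: then $Z_{(1,1),(m,1)}$ and $Z_{(0,1),(m,1)}$ are single-path partition functions, the middle and right fractions in \eqref{Zcomp-1} both equal $\wgtc_{m,1}$, and the left one is strictly smaller because $Z_{(1,0),(m,0)}>0$.

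The engine for $n\ge2$ is the first-step recursion $Z_{u,x}=\wgtc_{u+e_1}^{-1}Z_{u+e_1,x}+\wgtc_{u+e_2}^{-1}Z_{u+e_2,x}$, valid whenever $u+e_1\le x$ and $u+e_2\le x$. Applied with $x=v-e_1$ and $x=v-e_2$ it writes $\rho_u(v)$ as a ratio $(A_1+A_2)/(B_1+B_2)$ with $A_i/B_i=\rho_{u+e_i}(v)$, so $\rho_u(v)$ lies between $\rho_{u+e_1}(v)$ and $\rho_{u+e_2}(v)$; on the boundary of the region where these ratios are defined one of the two children is absent, and then $\rho_u(v)$ is, respectively, $\le$ or $\ge$ the surviving one. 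The key claim is that, for fixed $v$, the map $u\mapsto\rho_u(v)$ is nondecreasing along each anti-diagonal $\{\abs{u}_1=\ell\}$ as $u$ moves in the direction $-e_1+e_2$, over the range of $u$ on which $\rho_u(v)$ is defined. I would prove this by induction on $\abs{v-u}_1$, which is constant along an anti-diagonal: if $u$ and $\tilde u=u-e_1+e_2$ are consecutive on level $N$, then the children of $u$ and of $\tilde u$ on level $N-1$ satisfy $u+e_2=\tilde u+e_1$, so the induction hypothesis at level $N-1$ (applied to this shared point and its anti-diagonal neighbours) together with the mediant/boundary relations above forces $\rho_u(v)\le\rho_{u+e_2}(v)\le\rho_{\tilde u}(v)$.

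Granting the claim, the lemma follows quickly: $\rho_{(1,0)}(v)$ lies between $\rho_{(2,0)}(v)$ and $\rho_{(1,1)}(v)$, and since $(2,0)$ and $(1,1)$ lie on the anti-diagonal $\{\abs{u}_1=2\}$ with $(2,0)$ the lower-right of the two, the claim gives $\rho_{(2,0)}(v)\le\rho_{(1,1)}(v)$ and hence $\rho_{(1,0)}(v)\le\rho_{(1,1)}(v)$; symmetrically, $\rho_{(1,1)}(v)\le\rho_{(0,1)}(v)$ through $\rho_{(0,2)}(v)$. When $m=2$ or $n=2$ the relevant child is already absent, which only makes the step more immediate. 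The main obstacle I anticipate is purely organizational: carrying the boundary cases of the recursion through the induction cleanly — base points on the axes, and base points within one or two steps of $v$ in a coordinate — since these are exactly the places where a child of the recursion is missing and the comparison has to be made against the remaining child alone.
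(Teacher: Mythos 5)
Your argument is correct, but it is organized quite differently from the paper's. The paper fixes the two base points $e_1$ and $(1,1)$ (resp.\ $e_2$ and $(1,1)$), introduces the ratio weights $\eta_x=Z_{e_1,x-e_1}/Z_{e_1,x}$, $\zeta_x=Z_{e_1,x-e_2}/Z_{e_1,x}$ and their tilded analogues based at $(1,1)$, checks $\eta\le\wt\eta$ and $\zeta\ge\wt\zeta$ on the row $j=1$ and the column $i=1$, and then propagates these inequalities to every $x$ (in particular to $x=(m,n)$, which is the claim) through the north-east induction \eqref{h:NE1}, whose update is monotone in exactly the right way. So the paper's induction runs over the \emph{endpoint}, moving north-east away from the base. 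You instead fix the endpoint $v=(m,n)$, vary the \emph{base point}, and prove monotonicity of $\rho_u(v)=Z_{u,v-e_1}/Z_{u,v-e_2}$ in $u$ along anti-diagonals by backward induction on $\abs{v-u}_1$, driven by the first-step recursion and the mediant inequality. The two inductions are essentially transposes of one another --- indeed $\eta_{x-e_2}/\zeta_{x-e_1}$ in the paper's scheme equals your $\rho_u(x)$ --- and each proves more than the lemma needs, in complementary directions: the paper gets the comparison for all endpoints but only the designated base points, while you get it for all base points on each anti-diagonal but only for the one endpoint $v$. The paper's version is shorter because the NE-induction formalism absorbs the bookkeeping and the boundary check is a one-liner; yours is self-contained and makes the underlying mediant mechanism explicit, at the price of the case analysis (absent children on the boundary of the admissible region, the degenerate case $n=1$) that you correctly flag as the main organizational burden and that does go through as you describe.
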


\begin{pf} Consider the ratio weights for these partition functions:
\begin{eqnarray*}
\eta_x&=&\frac{Z_{0, x-e_1}({t_{e_1}}>0)}{Z_{0,x}({t_{e_1}}>0)} = \frac{Z_{e_1, x-e_1}}{Z_{e_1,x}} \quad
\mbox{and}\quad \tilde\eta_x=\frac{Z_{(1,1), x-e_1}}{Z_{(1,1),x}},
\\
\zeta_x&=&\frac{Z_{0, x-e_2}({t_{e_1}}>0)}{Z_{0,x}({t_{e_1}}>0)} = \frac{Z_{e_1, x-e_2}}{Z_{e_1,x}} \quad
\mbox{and}\quad \tilde\zeta_x=\frac{Z_{(1,1), x-e_2}}{Z_{(1,1),x}}.
\end{eqnarray*}
On the boundary of the lattice $\mathbb{N} ^2$, these ratios
satisfy
\[
\zeta_{1,j}=V_{1,j}=\tilde\zeta_{1,j}\quad
\mbox{and}\quad \eta_{i,1}=V_{i,1}\frac{\eta_{i,0}}{\eta_{i,0}+\zeta_{i-1,1}} <
V_{i,1}=\tilde\eta_{i,1}\qquad\mbox{for } i,j\ge2.
\]
NE induction (\ref{hNE1}) preserves these inequalities and gives
the first inequality of~(\ref{Zcomp-1}). The second comes analogously.
\end{pf}

\subsection{Large deviation bound for the log-gamma polymer}
Let $0<\alpha<\rho$,
and let $(\xi, \eta,\zeta)$ be a gamma system of weights
with parameters $(\alpha,\rho)$ according to Definition~\ref{ga-def}.
Let $Z_{0,v}$ be the partition function defined by
(\ref{gZ}) in this gamma system, with the corresponding point-to-point
quenched polymer measure
\[
Q_{0,v}\{x_\centerdot\}= \frac{1}{Z_{0,v}} \Biggl( \prod
_{i=1}^{t_{\mathrm{exit}}}\tau_{\{x_{i-1}, x _{i}\}
}^{-1}
\Biggr) \Biggl( \prod_{j=t_{\mathrm{exit}}+1}^{\llvert  v\rrvert _1}
\xi_{
x_j}^{-1} \Biggr), \qquad x_\centerdot\in
\Pi_{0,v}.
\]
%
Let the scaling parameter $N\ge1$ be real valued.
Let $(m,n)\in\mathbb{N}^2$ denote the endpoint of the
path. Measure the deviation from characteristic velocity by
%
%
\begin{equation}
\kappa_N=\bigl\llvert m-{N\Psi_1(\rho-\alpha)} \bigr
\rrvert \vee \bigl\llvert n- {N\Psi_1(\alpha)} \bigr\rrvert.
\label{mnN1}
\end{equation}

%
%
\begin{lemma}\label{ldp-lm1}
Let $\kappa_N$ be defined by (\ref{mnN1}). Let $\delta>0$.
Then there are constants
$0<\delta_1, c, c_1<\infty$ such that the following estimate holds.
For $(m,n)\in\mathbb{N}^2$, $N\ge1$ and $u\ge(1\vee c\kappa_N\vee
\delta N)$,
\[
\mathbb{P} \bigl[ Q_{0,(m,n)}\{{t_{e_1}}\ge u\} \ge
e^{-\delta_1 u} \bigr] \le e^{-c_1u}.
\]
%
Same bound holds for ${t_{e_2}}$.
The same constants work for
$(\alpha, \rho)$ that satisfy
$0<\alpha<\rho$ and vary in a compact set.
\end{lemma}

\begin{pf}
Let $\beta<\alpha$, and take two gamma systems:
$(\xi, \eta^\alpha, \zeta^\alpha)$ with parameters $(\alpha,\rho)$
and
$(\xi, \eta^\beta, \zeta^\beta)$ with parameters $(\beta,\rho)$.
Couple them so
that they share the $\xi$-variables, and $\eta^\beta_x\le\eta
^\alpha_x$
and $\zeta^\beta_x\ge\zeta^\alpha_x$ hold. This can be achieved by
imposing these same conditions on the variables in part (c) of
Definition~\ref{ga-def},
and then noting that the inequalities are preserved by (\ref{NE}).
Let $Z^\alpha$ and $Z^\beta$ be partition functions computed in these two
systems:
%
%
\begin{eqnarray}
\label{aux800} &&Q_{0,(m,n)}\{{t_{e_1}}\ge u\}\nonumber
\\
&&\qquad = \frac{1}{Z_{0,(m,n)}^\alpha} \sum_{x_\centerdot\in\Pi_{0,(m,n)}} \mathbf{1}
\{{t_{e_1}}\ge u\} \Biggl( \prod_{i=1}^{t_{\mathrm{exit}}}
\frac{1}{\eta^\alpha
_{i,0}} \Biggr) \Biggl( \prod_{j=t_{\mathrm{exit}}+1}^{m+n}
\xi_{ x_j}^{-1} \Biggr)
\\
&&\qquad \le\frac{Z_{0,(m,n)}^\beta}{Z_{0,(m,n)}^\alpha} \cdot\prod_{i=1}^{\lfloor{u}\rfloor}
\frac{\eta^\beta
_{i,0}}{\eta^\alpha_{i,0}}.\nonumber
\end{eqnarray}

In the bounds below, $\overline X=X-\mathbb{E} X$ denotes a centered
random variable.
Recall the mean (\ref{ElogZ}).
Let $\delta_1>0$. From (\ref{aux800})
%
%
\begin{eqnarray}
\label{app8}
&&  \mathbb{P} \bigl[ Q_{0,(m,n)}\{{t_{e_1}}\ge u\}
\ge e^{-\delta_1u} \bigr]\nonumber 
\\
&&\qquad \le \mathbb{P} \Biggl\{ \sum_{i=1}^{\lfloor{u}\rfloor}
\bigl( \overline{\log\eta ^\beta_{i,0}} - \overline{\log
\eta^\alpha_{i,0}} \bigr) \ge\delta_1 u \Biggr\}
\nonumber\\[-8pt]\\[-8pt]
&&\quad\qquad{}+ \mathbb{P} \bigl\{ \overline{ \log Z_{0,(m,n)}^\beta }-
\overline {\log Z_{0,(m,n)}^\alpha} \ge\bigl(\lfloor{u}\rfloor-m
\bigr) \bigl(\Psi_0(\alpha)-\Psi_0(\beta)\bigr)\nonumber
\\
&&\hspace*{105pt}\quad\qquad{} +n\bigl(\Psi_0(\rho-\beta)-\Psi
_0(\rho-\alpha)\bigr) -2\delta_1u \bigr\}.\nonumber
\end{eqnarray}

Standard large deviations apply to log-gamma variables, so
$\exists c_2>0$ such that
\[
\mathbb{P} \Biggl\{ \sum_{i=1}^{\lfloor{u}\rfloor}
\bigl( \overline{\log\eta ^\beta_{i,0}} - \overline{\log
\eta^\alpha_{i,0}} \bigr) \ge\delta_1 u \Biggr\}
\le e^{-c_2 u}.
\]
%
Taylor expand to second order the $\Psi_0$-differences
inside
the last probability in~(\ref{app8}). Keeping $\delta>0$ fixed,
pick $\delta_1>0$ and $\alpha-\beta>0$ small enough
and $c<\infty$ large enough. Then for another small constant $c_3>0$,
the probability
simplifies to
%
\[
\mathbb{P} \bigl\{ \overline{ \log Z_{0,(m,n)}^\beta}-\overline
{\log Z_{0,(m,n)}^\alpha} \ge c_3u \bigr\} \le
e^{-c_4u}.
\]
%
The bound comes again from i.i.d. large deviations, by virtue of
(\ref{app4}).
\end{pf}

%
%
%
%
%

\subsection{Ergodic theorem for centered cocycles}

With a bit of extra effort and with future use in mind, we prove
this ergodic theorem more generally than required for this paper. Fix a
dimension
$d\in\mathbb{N} $.
Let $\mathcal{R}\subset\mathbb{Z} ^d$ denote an arbitrary finite
set of admissible steps that contains at least one nonzero point. $0\in
\mathcal{R}$ is also acceptable. Admissible paths $(x_k)_{k=0}^n$ satisfy
$x_k-x_{k-1}\in\mathcal{R}$. Let $M=|\mathcal{R}|$ be the
cardinality of $\mathcal{R}$.

Define
\[
\mathcal{G}^+= \biggl\{ \sum_{z\in\mathcal{R}}
b_z z\dvtx  b_z\in\mathbb {Z} _+ \biggr\},
\]
and let $\mathcal{G}=\mathcal{G}^+-\mathcal{G}^+$ be the additive
subgroup of $\mathbb{Z} ^d$
generated by $\mathcal{R}$.
Let
$(\Omega,\mathfrak{S},\mathbb{P})$ be a probability space equipped
with a semigroup $(T_x)_{x\in\mathcal{G}^+}$ of commuting
measurable maps $T_x\dvtx \Omega\to\Omega$. In other words, the
assumptions are that $T_0=\operatorname{id}$ and $T_{x+y}=T_x\circ
T_y$ for $x,y\in
\mathcal{G}^+$. Generic points
of $\Omega$ are denoted by $\omega$.
Assume $\mathbb{P} $ invariant and ergodic under $(T_x)_{x\in\mathcal
{G}^+}$:  that is,
$\mathbb{P} \circ T_x^{-1}=\mathbb{P} $, and if $T_x^{-1}A=A$
$\forall x\in\mathcal{G}^+$
then $\mathbb{P} (A)\in\{0,1\}$.

%

Let $F\dvtx \Omega\times\mathcal{R}\to\mathbb{R}$ be a centered
cocycle, by which
we mean these properties:
\begin{longlist}[(ii)]
\item[(i)] $\forall z\in\mathcal{R}\dvtx  F(\omega,z)\in L^1(\mathbb{P})$
and $\mathbb{E} F(\omega,z)=0$.
\item[(ii)] The closed-loop (or cocyle) property:
if $\{x_k\}_{k=0}^n$ and $\{x'_\ell\}_{\ell=0}^m$ are two admissible
paths such
that $x_0=x'_0$ and $x_n=x'_m$, then
\[
\sum_{k=0}^{n-1} F(T_{x_k}\omega,
x_{k+1}-x_k) = \sum_{\ell
=0}^{m-1}
F\bigl(T_{x'_\ell}\omega, x'_{\ell+1}-x'_\ell
\bigr).
\]
\end{longlist}
Note that the closed-loop property forces $F(\omega,0)=0$ if $0\in
\mathcal{R}$.

Define the path integral of $F$ for $(\omega,x)\in\Omega\times
\mathcal{G}$ by
\[
f(\omega,x) = \sum_{k=0}^{n-1}F(T_{k_i}
\omega,x_{k+1}-x_k) - \sum_{\ell=0}^{m-1}F
\bigl(T_{x'_\ell}\omega,x'_{\ell+1}-x'_\ell
\bigr),
\]
where $(x_k)_{k=0}^n$ and $(x'_\ell)_{\ell=0}^m$ are any two\vspace*{1.5pt}
admissible paths from a common initial point $x_0=x'_0$ to $x_n = x$
and to $x'_m=0$.
In particular $f(\omega,0)=0$.
The closed-loop property ensures that $f$ is well defined.

Let $D_n=\{x\dvtx \exists z_1,\ldots,z_n\in\mathcal{R}\mbox{ such that
}z_1+\cdots+z_n=x\}$ denote the set of points accessible from $0$ in
exactly $n$ steps.

%
%
\begin{theorem}\label{erg-thm1}
Let $F$ be a centered cocycle. Assume there exists a function $\overline F\dvtx \Omega\times\mathcal{R}\to\mathbb{R}$ such that $F(\omega,z)\le
\overline F(\omega,z)$
for all $z\in\mathcal{R}$ and $\mathbb{P} $-almost every $\omega$,
and that satisfies
%
%
\begin{equation}
\mathop{\operatorname{\overline{\lim}}}_{\delta\searrow0} \mathop{\operatorname{
\overline{\lim}}}_{n\to\infty} \max_{\llvert  x\rrvert _1 \le n}
\frac{1}n \sum_{0\le i\le n\delta} \bigl\llvert \overline
F(T_{x+iz}\omega, z)\bigr\rrvert = 0 \qquad\forall z\in \mathcal{R}
\setminus \{0\}. \label{assL}
\end{equation}
Then for $\mathbb{P} $-almost every $\omega$
\[
\lim_{n\to\infty}\max_{x\in D_n}\frac{\llvert  f(\omega,x)\rrvert }{n}=0.
\]
%
\end{theorem}

An assumption similar to (\ref{assL}) was useful in
\cite{rass-sepp-p2p,rass-sepp-yilm} in studies of polymers.
If, for each $z\in\mathcal{R}\setminus\{0\}$,
the variables $\{ \overline F( T_{iz}\omega, z)\}_{i\in\mathbb{Z} _+}$ are
i.i.d. then by
Lemma~A.4
of \cite{rass-sepp-yilm}
a sufficient condition for (\ref{assL}) is
%
%
\begin{equation}
\exists p>d\dvt\qquad \mathbb{E} \bigl[ \bigl\llvert \overline F(\omega,z)\bigr\rrvert
^p \bigr] <\infty. \label{Lmom}
\end{equation}
Our application of Theorem~\ref{erg-thm1} is to the centered cocycle
$F^\mathbf{u}$
in (\ref{Fxi}). By Corollary~\ref{buse-cor} this satisfies the
i.i.d. condition and even has
an exponential moment. Thus hypothesis (\ref{assL}) is satisfied
by $F^\mathbf{u}$ in (\ref{Fxi}), and Theorem~\ref{erg-thm1} holds for
$F=\overline F=F^\mathbf{u}$.


As an auxiliary result toward the main theorem, we prove a limit
for averages over rectangles of any dimension. The following result is
a discrete version of
Lemma 6.1 of \cite{kosy-vara-08}.

%
%
\begin{theorem}\label{erg-thm2} Let $F$ be a centered cocycle. Let
$r\in[M]=\{1,2,\ldots, M\}$, $z_1,\ldots,z_r$ distinct points from
$\mathcal{R}$,
and $0\le a_i<b_i$ for $1\le i\le r$.
Then
%
%
\begin{equation}
\qquad\lim_{n\to\infty}\frac{1}{n^r}\sum
_{k_1=\lfloor{na_1}\rfloor
}^{\lfloor{nb_1}\rfloor-1}\cdots\sum_{k_r=\lfloor{na_r}\rfloor
}^{\lfloor{nb_r}\rfloor-1}
\frac{f(\omega,k_1 z_1+\cdots+k_r z_r)}{n} = 0 \qquad\mbox{$\mathbb{P} $-a.s.} \label{erg3}
\end{equation}
\end{theorem}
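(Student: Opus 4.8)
The plan is to expand the rectangular average of $f$ in \eqref{erg3} into a finite sum of weighted multiparameter ergodic averages of the mean-zero functions $F(\cdot,z_s)$, and to show that each of these averages tends to $0$.

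\emph{Path-integral identity.} Since $z_1,\dots,z_r\in\cR$, the admissible path from $0$ to $k_1z_1+\dots+k_rz_r$ that takes $k_1$ steps $z_1$, then $k_2$ steps $z_2$, and so on, together with the closed-loop property and $f(\omega,0)=0$, gives
\[
f(\omega,\,k_1z_1+\dots+k_rz_r)=\sum_{s=1}^{r}\ \sum_{j=0}^{k_s-1}F\bigl(T_{k_1z_1+\dots+k_{s-1}z_{s-1}+jz_s}\,\omega,\ z_s\bigr).
\]
Substituting this into \eqref{erg3}, in the $s$-th term the summand depends only on $k_1,\dots,k_{s-1}$ and $j$, so summing out $k_{s+1},\dots,k_r$ produces the bounded prefactor $n^{-(r-s)}\prod_{i>s}(\fl{nb_i}-\fl{na_i})\to\prod_{i>s}(b_i-a_i)$, while summing out $k_s$ under the constraint $j<k_s$ produces, for each $j$, the count $c^{(s)}_{j,n}=(\fl{nb_s}-\max(j+1,\fl{na_s}))^+$, which satisfies $0\le c^{(s)}_{j,n}\le nb_s$ and $|c^{(s)}_{j,n}/n-W_s(j/n)|\le 2/n$ with $W_s(t)=(b_s-\max(t,a_s))^+$. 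Since the prefactors are bounded and $r$ is finite, \eqref{erg3} will follow once I show that for each $s\in[r]$
\[
\frac1{n^{s}}\sum_{k_1=\fl{na_1}}^{\fl{nb_1}-1}\cdots\sum_{k_{s-1}=\fl{na_{s-1}}}^{\fl{nb_{s-1}}-1}\ \sum_{j=0}^{\fl{nb_s}-2}\ W_s(j/n)\,F\bigl(T_{k_1z_1+\dots+k_{s-1}z_{s-1}+jz_s}\,\omega,\ z_s\bigr)\ \longrightarrow\ 0\qquad\text{a.s.},
\]
the replacement of $c^{(s)}_{j,n}/n$ by $W_s(j/n)$ costing only $O(n^{-1})$ times an average of $|F(\cdot,z_s)|$, which is a.s.\ bounded by Birkhoff's theorem.

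\emph{The key lemma.} The remaining task is: for a corrector $F$, distinct $w_1,\dots,w_p\in\cR$ one of which equals a prescribed $z\in\cR$, reals $0\le\alpha_i<\beta_i$, and a bounded function $W$ on a compact interval, the weighted box average $n^{-p}\sum W(k_p/n)\,F\bigl(T_{k_1w_1+\dots+k_pw_p}\omega,z\bigr)$ (each $k_i$ running over $[\fl{n\alpha_i},\fl{n\beta_i})$) tends to $0$ $\P$-a.s. Approximating $W$ uniformly by step functions, and bounding the error again by a bounded average of $|F(\cdot,z)|$, reduces this to $W\equiv1$. A classical multiparameter pointwise ergodic theorem for the commuting maps $T_{w_1},\dots,T_{w_p}$ gives a.s.\ convergence of the box averages to some limit $\Psi$, and the point is to show $\Psi\equiv0$. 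For this I would fix $z'\in\cR$ and use the cocycle identity $F(T_{z'}\eta,z)=F(\eta,z)+F(T_z\eta,z')-F(\eta,z')$, a direct consequence of the closed-loop property, to write
\[
\frac1{n^p}\sum F\bigl(T_{z'+k_1w_1+\dots+k_pw_p}\,\omega,\,z\bigr)=\frac1{n^p}\sum F\bigl(T_{k_1w_1+\dots+k_pw_p}\,\omega,\,z\bigr)+\Delta_n,
\]
where $\Delta_n=n^{-p}\sum\bigl[F(T_{z+\cdots}\,\omega,z')-F(T_{\cdots}\,\omega,z')\bigr]$. Because $z=w_i$ for some $i$, the sum defining $\Delta_n$ telescopes in $k_i$ to a sum over a $(p-1)$-dimensional box of two boundary layers of $F(\cdot,z')$, whence $\E|\Delta_n|\le Cn^{-1}$ and $\Delta_n\to0$ in $L^1$. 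Since both sides of the display converge a.s.\ — the left-hand side to $\Psi\circ T_{z'}$, the right-hand side to $\Psi$ — I get $\Psi\circ T_{z'}=\Psi$ a.s.\ for every $z'\in\cR$; as the $T_{z'}$ generate the semigroup $(T_x)_{x\in\cG^+}$, ergodicity forces $\Psi$ to be a.s.\ constant. Finally each box average has mean $\prod_i(\beta_i-\alpha_i)\,\E F(\cdot,z)=0$ by stationarity, and the $L^1$ mean ergodic theorem gives $\E\Psi=0$, so $\Psi=0$.

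The main obstacle is the key lemma, and within it the identification of the multiparameter ergodic limit $\Psi$ as the \emph{constant} $0$ rather than merely a mean-zero invariant function: this is exactly where the cocycle property of the corrector enters, through the displayed identity and the telescoping bound on $\Delta_n$, together with the ergodicity of the full semigroup $(T_x)_{x\in\cG^+}$. The multiparameter pointwise ergodic theorem for commuting measure-preserving maps is classical; one could instead try to induct on $p$, but some such input appears unavoidable, since the translations produced when the coordinates are peeled off one at a time grow linearly in $n$, so naive iteration of one-dimensional ergodic theorems does not suffice.
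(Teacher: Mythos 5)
Your proposal is correct and follows essentially the same route as the paper: the paper's Lemma \ref{erg-lm1} is exactly your key lemma (a weighted multiparameter ergodic average of $F(\cdot,z_j)$ tending to $0$, with the limit identified as the constant $0$ by using the closed-loop property to show shift-invariance, then ergodicity and $\E F=0$), and the paper's induction on $r$ with the splitting $f(\w,x+k_{r+1}z_{r+1})=f(\w,x)+f(T_x\w,k_{r+1}z_{r+1})$ is just the recursive form of your one-shot staircase expansion of $f$. Your telescoping bound $\E|\Delta_n|\le Cn^{-1}$ combined with the ``a.s.\ convergence of the other terms forces the in-probability limit to hold a.s.'' step is precisely how the paper disposes of the two boundary layers in the proof of Lemma \ref{erg-lm1}.
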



It is enough to consider the case $a_i=0$, for the general case
is obtained by successive differences and sums of such cases.
Then to simplify notation we take $b_i=1$.
We separate a part of the proof as a lemma.

%
%
\begin{lemma} 
Let $1\le j\le r\le M$ and
$g\dvtx [0,1]^r\to\mathbb{R}$ continuous. Then $\mathbb{P} $-a.s.
%
%
\begin{equation}
\qquad \lim_{n\to\infty}\frac{1}{n^r}\sum
_{k_1=0}^{n-1}\cdots\sum_{k_r=0}^{n-1}g
\bigl(n^{-1}(k_1,\ldots,k_r)
\bigr)F(T_{k_1 z_1+\cdots
+k_r z_r}\omega,z_j) = 0. \label{aslan}
\end{equation}
%
\end{lemma}

\begin{pf}
Fix $j$. Let $h(\omega)$ denote the a.s. limit of the left-hand side of
(\ref{aslan}) for
$g\equiv1$, given by the
pointwise ergodic theorem (\cite{kren}, Theorem~6.2.8). 
We show that $h$ is invariant under each shift $T_z$, $z\in\mathcal{R}$.
By the closed-loop property (now for $j\in\{1,\ldots,r\}$)
\begin{eqnarray*}
&&\frac{1}{n^r}\sum_{k_1=0}^{n-1}\cdots
\sum_{k_r=0}^{n-1} F(T_{k_1z_1
+ k_2z_2 + \cdots+ k_rz_r}\omega,
z_j)
\\
&&\quad{}+ \frac{1}{n^r}\sum_{k_1=0}^{n-1}
\cdots\sum_{k_{j-1}=0}^{n-1} \sum
_{k_{j+1}=0}^{n-1}\cdots\sum_{k_r=0}^{n-1}
F(T_{k_1z_1 + k_2z_2 + \cdots+ nz_j + \cdots+ k_rz_r}\omega, z)
\\
&&\qquad= \frac{1}{n^r}\sum_{k_1=0}^{n-1}
\cdots\sum_{k_{j-1}=0}^{n-1} \sum
_{k_{j+1}=0}^{n-1}\cdots\sum_{k_r=0}^{n-1}
F(T_{k_1z_1 + k_2z_2 + \cdots+ 0\cdot z_j + \cdots+ k_rz_r}\omega, z)
\\
&&\quad\qquad{}+ \frac{1}{n^r}\sum_{k_1=0}^{n-1}
\cdots\sum_{k_r=0}^{n-1} F\bigl(T_{k_1z_1 + k_2z_2 + \cdots+ k_rz_r}(T_{z}
\omega), z_j\bigr).
\end{eqnarray*}
The\vspace*{1pt} closed loop above is taken for fixed
$T_{k_1z_1 + \cdots+ k_{j-1}z_{j-1} +k_{j+1}z_{j+1} +\cdots+
k_rz_r}\omega$.
The two paths are $\{z_j, 2z_j,\ldots, nz_j, nz_j+z\}$
and $\{z, z+z_j, z+2z_j,\ldots, z+nz_j\}$.

The first sum converges to $h(\omega)$, the last one to
$h(T_{z}\omega)$.
By the pointwise ergodic theorem the first sum on the right converges to
$0$ because it has only $n^{r-1}$ terms. Consequently all terms converge
a.s.
The second sum on the left must also vanish in the limit because
it converges to zero in probability.
We get $h(\omega) = h(T_{z}\omega)$ $\forall z\in\mathcal{R}$ and conclude
by ergodicity
and the mean-zero property of~$F$ that $h=0$.
Then (\ref{aslan}) follows by a Riemann sum-type approximation.
\end{pf}

\begin{pf*}{Proof of Theorem~\ref{erg-thm2}}
This goes by induction on $r$.
For $r=1$, rearrange
\begin{eqnarray*}
\frac{1}{n}\sum_{k=0}^{n-1}
\frac{f(\omega, kz_1)}n &=& \frac{1}{n}\sum_{k=1}^{n-1}
\frac{1}{n} \sum_{i=0}^{k-1}F(T_{iz_1}
\omega,z_1)
= \frac{1}{n}\sum_{k=0}^{n-1}
\biggl(1-\frac{k+1}{n} \biggr)F(T_{kz_1}\omega,z_1).
\end{eqnarray*}
An application of (\ref{aslan}) with $g(y)=1-y$ gives conclusion
(\ref{erg3})
for $r=1$.

Suppose that (\ref{erg3}) holds for some $r\in\{1,\ldots,M-1\}$. Let
us show it for $r+1$.
\begin{eqnarray*}
&&\frac{1}{n^{r+1}}\sum_{k_1=0}^{n-1}\cdots
\sum_{k_r=0}^{n-1}\sum
_{k_{r+1}=0}^{n-1}\frac{f(\omega,k_1 z_1+\cdots
+k_rz_r+k_{r+1}z_{r+1})}{n}
\\
&&\qquad = \frac{1}{n^{r+2}}\sum
_{k_1=0}^{n-1}\cdots\sum_{k_r=0}^{n-1}
\sum_{k_{r+1}=0}^{n-1} \bigl[f(
\omega,k_1 z_1+\cdots +k_rz_r)
\\
&&\hspace*{140pt}{}+ f(T_{k_1 z_1+\cdots+k_rz_r}\omega,
k_{r+1}z_{r+1}) \bigr]
\\
&&\qquad= \frac{1}{n^r}\sum_{k_1=0}^{n-1}
\cdots\sum_{k_r=0}^{n-1}\frac{f(\omega,k_1 z_1+\cdots+k_rz_r)}{n}
\\
&&\quad\qquad {} + \frac{1}{n^{r+2}}\sum
_{k_1=0}^{n-1} \cdots\sum_{k_{r+1}=0}^{n-1}
f(T_{k_1 z_1+\cdots+k_rz_r}\omega, k_{r+1}z_{r+1})
\\
&&\qquad = \frac{1}{n^r}\sum_{k_1=0}^{n-1}
\cdots\sum_{k_r=0}^{n-1}\frac{f(\omega,k_1 z_1+\cdots+k_rz_r)}{n}
\\
&&\quad\qquad{} + \frac{1}{n^{r+1}}\sum_{k_1=0}^{n-1}
\cdots\sum_{k_{r+1}=0}^{n-1} \biggl(1-
\frac{k_{r+1}+1}{n} \biggr)F(T_{k_1
z_1+\cdots+k_{r+1}z_{r+1}}\omega,z_{r+1}).
\end{eqnarray*}

As $n\to\infty$ on the last line, the first sum goes to zero by the
induction hypothesis and the second sum by (\ref{aslan}) with
$g(y_1,\ldots, y_{r+1})=1-y_{r+1}$.
\end{pf*}

\begin{pf*}{Proof of Theorem~\ref{erg-thm1}}
Fix a labeling $z_1,\ldots,z_M$ of the steps in $\mathcal{R}$.
We first prove
%
%
\begin{equation}
\mathop{\operatorname{\underline{\lim}}}_{n\to\infty}\min
_{x\in
D_n}\frac{f(\omega,x)}{n} \ge 0.\label{supper}
\end{equation}
%

Let $\delta>0$ and $a_k=k\delta/(4M)$ for $k\in\mathbb{Z} _+$.
For $\mathbf{k}=(k_1,\ldots, k_M)\in\mathbb{Z} _+^M$ define sets
\[
B_{n,\mathbf{k}}= \Biggl\{ \sum_{i=1}^M
s_i z_i\dvtx  \lfloor {na_{k_i}}\rfloor\le
s_i<\lfloor{na_{k_i+1}}\rfloor\mbox{ for } i\in[M]
\Biggr\}.
\]
%
For each $x\in D_n$ we can pick
$B_{n,x}=B_{n,\mathbf{k}(x)}$ such that every point $y\in B_{n,x}$
can be reached from $x$ with an admissible path of at most $n\delta$ steps.\vspace*{2pt}
(The assumption
$x\in D_n$ implies $x=\sum_{i=1}^M b_i z_i$ with $\sum_{i=1}^M b_i=n$.
For each $i$ take $k_i$ minimal such that $\lfloor{n a_{k_i}}\rfloor
\ge b_i$.)
Our strategy is to replace $f(\omega,x)$ with an average of $f$ over~$B_{n,x}$.
Note that there is a fixed finite set $K$ of vectors $\mathbf{k}$
such that the above choices can be made from $\{ B_{n,\mathbf{k}}\dvtx
\mathbf{k}
\in K\}$
for all large enough $n$ and all $x\in D_n$. 

For every $x\in D_n$ and every $y\in B_{n, x}$ fix a path
from $x$ to $y$ such that the steps $z_1,z_2,\ldots, z_M$ are taken in
order. Recall that $F(\omega,0)=0$.
Then for any such pair $x,y$, with designated path $(x_i)_{i=0}^m$,
\begin{eqnarray*}
f(\omega,x) &=& f(\omega,y) - \sum_{i=0}^{m-1}F(T_{x_i}
\omega,x_{i+1}-x_i)\mathbf {1}\{ x_{i+1}\ne
x_i\}
\\
&\ge& f(\omega,y) - \sum_{i=0}^{m-1}
\overline F(T_{x_i}\omega,x_{i+1}-x_i)\mathbf{1}
\{x_{i+1}\ne x_i\}
\\
&\ge& f(\omega,y) - \sum_{z\in\mathcal{R}\setminus\{0\}}
\biggl\{ \max_{\llvert  u\rrvert _1\le
2nr_0} \sum_{0\le i\le n\delta}
\bigl\llvert \overline F(T_{u+iz}\omega, z)\bigr\rrvert \biggr\}.
\end{eqnarray*}
Above $r_0=\max\{\llvert  z\rrvert _1\dvtx z\in\mathcal{R}\}$.
The error term is independent of $x,y$.
Average over $y\in B_{n,x}$, and then take minimum over $x\in D_n$,
\begin{eqnarray*}
\min_{x\in D_n}\frac{f(\omega,x)}{n} &\ge& \min_{\mathbf{k}\in K}
\frac{1}{N_{n,\mathbf{k}}}\sum_{s_1=\lfloor
{na_{k_1}}\rfloor}^{\lfloor{na_{k_1+1}}\rfloor-1}\cdots
\sum_{s_M=\lfloor{na_{k_M}}\rfloor}^{\lfloor{na_{k_M+1}}\rfloor-1}\frac
{f(\omega,s_1 z_1+\cdots+s_M z_M)}{n}
\\
&&{} - \sum_{z\in\mathcal{R}\setminus\{0\}} \biggl
\{ \max_{|u|_1\le2nr_0} \frac{1}n \sum
_{0\le i\le n\delta} \bigl\llvert \overline F(T_{u+iz}\omega, z)
\bigr\rrvert \biggr\},
\end{eqnarray*}
where $N_{n,\mathbf{k}}=\prod_{i=1}^M(\lfloor{na_{k_i+1}}\rfloor
-\lfloor{na_{k_i}}\rfloor)\sim Cn^M$.
As $n\to\infty$, the first term on the
right vanishes by Theorem~\ref{erg-thm2}. After that let $\delta\to0$,
and assumption (\ref{assL}) takes care of the last term.
Bound (\ref{supper}) has been verified. 

To prove
%
%
\begin{equation}
\mathop{\operatorname{\overline{\lim}}}_{n\to\infty}\max_{x\in
D_n}
\frac{f(\omega,x)}{n} \le 0\label{slower},
\end{equation}
we repeat the argument but with more rectangles.

For $\varnothing\ne I\subset[M]$ and $\mathbf{k}=(k_i)_{i\in I
}\subset\mathbb{Z} _+^{|I|}$, define
\[
B_{n,I,\mathbf{k}}= \biggl\{ \sum_{i\in I}
s_i z_i\dvtx  \lfloor {na_{k_i}}\rfloor\le
s_i<\lfloor{na_{k_i+1}}\rfloor\mbox{ for } i\in I
\biggr\}.
\]
%
For each $x\in D_n$ pick $B_{n,x}=B_{n,I(x),\mathbf{k}(x)}$ so that
$x$ can be reached from every point $y\in B_{n,x}$ with an admissible
path of at most $n\delta$ steps. The additional flexibility of choice
of $I(x)$ accommodates points $x=\sum_{i=1}^M b_iz_i$ such that
some $b_i< \lfloor{na_1}\rfloor$ and therefore a rectangle
$B_{n,\mathbf{k}}$ that
uses all $M$ steps cannot be placed ``upstream'' from $x$. As before,
there is a fixed finite set $K$ from which all the vectors $\mathbf{k}(x)$
can be chosen, for all $x\in D_n$ and large enough $n$.

For every $x\in D_n$ and $y\in B_{n, x}$ fix a path
from $y$ to $x$ such that the steps $z_j$, $j\in I(x)$, are taken
in order.
Then for any such pair $x,y$, with designated path~$(x_i)_{i=0}^m$,
\begin{eqnarray*}
f(\omega,x) &=& f(\omega,y) + \sum_{i=0}^{m-1}F(T_{x_i}
\omega,x_{i+1}-x_i)\mathbf {1}\{ x_{i+1}\ne
x_i\}
\\
&\le& f(\omega,y) + \sum_{i=0}^{m-1}
\overline F(T_{x_i}\omega,x_{i+1}-x_i)\mathbf{1}
\{x_{i+1}\ne x_i\}
\\
&\le& f(\omega,y) + \sum_{z\in\mathcal{R}\setminus\{0\}} \biggl\{ \max
_{|u|_1\le
2nr_0} \sum_{0\le i\le n\delta} \bigl\llvert
\overline F(T_{u+iz}\omega, z)\bigr\rrvert \biggr\}.
\end{eqnarray*}
Again average over $y\in B_{n, x}$ to obtain
\begin{eqnarray*}
&& \max_{x\in D_n}\frac{f(\omega,x)}{n}
\\
&&\qquad \le\mathop{\max_{\mathbf{k}\in K}}_{\varnothing\ne I\subset
[M]}\frac{1}{N_{n,I,\mathbf{k}}}
\sum_{s_{j_1}=\lfloor
{na_{k_{j_1}}}\rfloor}^{\lfloor{na_{k_{j_1}+1}}\rfloor-1}\cdots \sum
_{s_{j_{\llvert  I\rrvert }}=\lfloor{na_{k_{j_{\llvert  I\rrvert }}}}\rfloor}^{\lfloor{na_{k_{j_{\llvert  I\rrvert }+1}}}\rfloor-1} \frac{f(\omega,s_{j_1} z_{j_1}+\cdots+s_{j_{\llvert  I\rrvert }}
z_{j_{\llvert  I\rrvert }})}{n}
\\
&&\quad\qquad{} + \sum_{z\in\mathcal{R}\setminus\{0\}} \biggl\{ \max
_{|u|_1\le
2nr_0} \frac{1}n \sum_{0\le i\le n\delta}
\bigl\llvert \overline F(T_{u+iz}\omega, z)\bigr\rrvert \biggr\},
\end{eqnarray*}
where $N_{n,I,\mathbf{k}}\sim C n^{\llvert  I\rrvert }$ and $I=\{ j_1,
\ldots,
j_{\llvert  I\rrvert }\}$.
Bound (\ref{slower}) follows as above.
\end{pf*}
\end{appendix}




\printaddresses
\end{document}